\documentclass[11pt]{amsart}

\usepackage{amssymb}
\usepackage{epsfig}
\usepackage{comment}
\usepackage{amsmath}
\usepackage{subcaption}
\usepackage[section]{placeins}
\usepackage{float}
\usepackage{mathrsfs}
\usepackage{graphicx}
\usepackage{tikz}
\usetikzlibrary{arrows.meta,calc,patterns,positioning}
\usepackage[notrig]{physics}
\usepackage{units}
\usepackage[T1]{fontenc}
\usepackage{lmodern}
\usepackage{amsthm,mathtools}
\usepackage{enumitem}
\usepackage{geometry}
\usepackage{esint}
\numberwithin{equation}{section}

\newtheorem{theorem}{Theorem}[section]
\newtheorem{lemma}[theorem]{Lemma}
\newtheorem{proposition}[theorem]{Proposition}
\newtheorem{definition}[theorem]{Definition}
\newtheorem{corollary}[theorem]{Corollary}
\newtheorem{remark}[theorem]{Remark}
\newtheorem{example}[theorem]{Example}

\newcommand{\mres}{\mathbin{\vrule height 1.6ex depth 0pt width
0.13ex\vrule height 0.13ex depth 0pt width 1.3ex}}

\begin{document}

\title[Microstructure evolution as a game]{Microstructure evolution as a game }

\author{M.~Ortiz}

\address
{
    Division of Engineering and Applied Science, 
    California Institute of Technology, 1200 E.~California Blvd., Pasadena, CA 91125 USA; 
    Centre Internacional de M\`etodes Num\`erics a l'Enginyeria (CIMNE),
    Universitat Polit\`ecnica de Catalunya, Jordi Girona 1, 08034 Barcelona, Spain.
}

\email{ortiz@caltech.edu}

\begin{abstract}
We develop a whole-trajectory relaxation framework for dissipative evolutions with non-convex, state- and history-dependent kinetics. The framework arises naturally from a game-theoretical formulation of energy--dissipation evolution. State--rate consistency is eliminated by causal Volterra reconstruction, reducing the problem to a diagonal equilibrium condition on complete rate trajectories. We establish a direct method based on vanishing-defect approximability, compactness, and complementary semicontinuity. The relaxation mechanism is computed exactly for a reduced simple-shear model with a non-convex kinetic potential and a local memory variable. The enriched state is a Young measure on complete local histories. We identify explicitly the sequential lower-semicontinuous envelope of the pure diagonal defect, characterize its zero set, construct pure recovery sequences for every relaxed zero, and prove existence through a frozen-best-response time discretization. The resulting coexistence of distinct rate populations with distinct memories provides a reduced mechanism for kinetic shear banding and connects naturally with negative strain-rate sensitivity and dynamic strain aging such as underlie the Portevin--Le Ch\^atelier effect.
\end{abstract}

\maketitle

\tableofcontents

{\small

\noindent\textbf{Keywords.}
Energy--dissipation evolution; non-convex kinetics; evolutionary relaxation;
diagonal equilibrium; history-dependent microstructure; Young measures;
recovery sequences; rate-dependent plasticity; kinetic shear banding.

\noindent\textbf{2020 Mathematics Subject Classification.}
49J45 (primary); 47J35, 74D10 (secondary).

}

\section{Introduction}
\label{sec:introduction}

Microstructure can be a manifestation of non-convex energy minimization, as in martensitic phase transformations and shape-memory alloys \cite{Bhattacharya2003}. The calculus of variations then provides a natural language of weak convergence, lower-semicontinuous envelopes, and recovery sequences. In many dissipative materials, however, non-convexity enters not only through the stored energy but through the kinetics themselves. Fine-scale patterns may therefore be selected dynamically rather than by a sequence of static energy minimizations. Representative examples include, to mention a few:

\begin{itemize}
\item[i)] single-crystal plasticity with latent hardening, where competition among slip systems produces non-convex and state/branch-dependent incremental problems, slip laminates, and dislocation microstructures \cite{OrtizRepetto1999, ContiOrtiz2005, ContiHauretOrtiz2007};
\item[ii)] jerky flow associated with dynamic strain aging, including the Portevin--Le Ch\^atelier phenomena, where negative strain-rate sensitivity drives propagating localization bands \cite{Yilmaz2011};
\item[iii)] shear-banding complex fluids, where non-monotone kinetics support coexisting bands with different shear rates and internal structures \cite{DivouxEtAl2016};
\item[iv)] adiabatic shear localization in dynamically loaded metals, where hardening competes with microstructural or thermal softening \cite{OsovskiRittelVenkert2013}; and
\item[v)] driven electrochemical systems, where electro-autocatalytic reaction kinetics can suppress or induce heterogeneous insertion patterns under nonequilibrium operation \cite{BaiCogswellBazant2011,Bazant2017}.
\end{itemize}

A particularly transparent manifestation of history-dependent kinetic microstructure is furnished by dynamic strain aging and the Portevin--Le Ch\^atelier effect \cite{Yilmaz2011,McCormick1988,Ananthakrishna2007}. In the instability regime, plastic flow localizes into deformation bands and the constitutive response may exhibit negative strain-rate sensitivity. Internal variables describing aging, pinning, or solute--dislocation interactions carry local memory, so populations that have experienced different rate histories need not have the same subsequent kinetic response even when their present macroscopic strain is the same. This gives a direct version of the restart obstruction: an instantaneous convexification of the rate law does not record which microscopic population acquired which internal history. The complete trajectory, rather than only the current relaxed rate, is therefore the natural object to compactify.

This distinction exposes a basic difficulty with incremental relaxation. Classical evolution theories provide several powerful but structurally different principles. Monotone-operator and variational-inequality methods treat coercive nonlinear responses \cite{Minty1962,LerayLions1965,Brezis1968}. Minimizing movements and curves of maximal slope construct gradient evolutions from time-incremental minimization \cite{DeGiorgi1993,Ambrosio1995,DeGiorgiMarinoTosques1980,AmbrosioGigliSavare2008}. Energetic and Balanced-Viscosity formulations describe rate-independent evolutions, including jumps \cite{MielkeTheil2004,MielkeRossiSavare2012,MielkeRossiSavare2016}. Parametrized quasi-static evolutions provide a related trajectory-level description in terms of stationarity and energy balance, together with constructive approximation and evolutionary $\Gamma$-convergence results; see, in particular, Negri \cite{Negri2014}. Trajectory-based minimum principles provide another route for dissipative systems \cite{ContiOrtiz2008,MielkeOrtiz2008,MielkeStefanelli2011}.

Whole-trajectory variational formulations are therefore not new in themselves, but the relaxation mechanism used here is different. In their classical form, the Br\'ezis--Ekeland principle and the related Nayroles minimum principles characterize convex dissipative evolutions by minimizing a nonnegative space--time functional built from a potential and its Fenchel conjugate \cite{BrezisEkeland1976,Nayroles1976}; convex duality is thus built into the construction. The weighted dissipation--energy family of Mielke and Ortiz \cite{MielkeOrtiz2008}, and the related weighted energy--dissipation formulation \cite{MielkeStefanelli2011}, likewise act on complete trajectories. In the Mielke--Ortiz construction, however, the direct $\Gamma$-limit as the regularization parameter tends to zero is highly degenerate and does not itself furnish a nontrivial relaxed trajectory functional; limiting rate-independent trajectories are instead identified through parameter-independent estimates and compactness. Conti and Ortiz \cite{ContiOrtiz2008} derived rigorous relaxations for two specific energy--dissipation functionals, but those model calculations do not supply a general closure principle for anomalous, state/branch-dependent kinetics.

The difficulty addressed here is precisely that such kinetics may fail to admit stable pure incremental minimizers. The endpoint of a time step may then be represented only by a minimizing sequence or generalized microstructure, so there is no pure state from which to start the next step; moreover, different microscopic realizations with the same relaxed endpoint can have different costs of subsequent rearrangement. This \emph{restart problem} \cite{ContiOrtiz2008} shows that the issue is not merely to compactify individual states or rates: one must compactify the \emph{entire state--rate evolution} while retaining the initial condition, causal history, work, and dissipation. The present construction therefore relaxes the whole-path equilibrium principle itself: the pure competitor class is retained, while relaxed solutions are generated as enriched-space limits of pure almost equilibria whose complete diagonal defects vanish.

The organizing idea of this paper is to make that whole-trajectory structure explicit as a game. At the heuristic level, each time $t$ may be regarded as a player choosing an instantaneous rate, with later players seeing the state accumulated from earlier choices. Passing from pointwise-in-time comparisons to an integral comparison produces a path-level game between state and rate histories: the state path enforces the consistency relation $\dot u=v$, while the rate path minimizes the energy--dissipation cost. For the class considered here, the decisive simplification is that the state player has a unique finite response and can be eliminated exactly through the Volterra operator:
\begin{equation}
Rv:=u_0+Kv,
\quad
J(w;v):=G(w;Rv) ,
\end{equation}
where $G(v;u)$ is an energy--dissipation functional that encodes the physics of the system. The first argument is the trial rate, while the second, written after the semicolon, generates the state. The evolution is then treated directly as a diagonal equilibrium
\begin{equation}
J(v;v)\leq J(w;v), 
\quad
\forall w.
\end{equation}
If the competitor section is proper, convex, and lower semicontinuous, the same condition can be written
\begin{equation}
0\in L(v),
\quad
L(v):=\partial_1J(v;v).
\end{equation}
This reduced formulation is closely related to variational inequalities, noncooperative dynamic games, and equilibrium problems \cite{Stackelberg1934,BasarOlsder1999,Fan1961,BlumOettli1994}, but its characteristic feature is the diagonal dependence induced by causal reconstruction. The exact Volterra reconstruction also explains why the game structure becomes comparatively unobtrusive in the examples treated below: the state player's best response is single-valued, explicit, and can be solved out. For more general causal constraints the state response may be available only implicitly, or exact elimination may be analytically undesirable; in that setting the unreduced state--rate game remains a substantive part of the formulation. The continuity equation in mass transport, discussed in the concluding remarks, is a prototype in which the causal coupling is a PDE constraint rather than an elementary Volterra operator.

Keeping the \emph{full} diagonal response is essential. In a quadratic Hilbert-space setting it produces the generally nonsymmetric bilinear form obtained after substituting $u=Rv$. If $S\in\mathcal L(H)$ is time independent, self-adjoint, and positive semidefinite, then the Volterra contribution satisfies
\begin{equation}
\int_0^T (SKv(t),v(t))_H\,dt
=
\frac12(SKv(T),Kv(T))_H
\geq 0.
\end{equation}
Indeed, $(Kv)'=v$ and self-adjointness gives $\frac{d}{dt}\frac12(SKv,Kv)_H=(SKv,v)_H$. This favorable causal term is lost if the state force is replaced by an arbitrary external perturbation. More generally, the two occurrences of the rate variable play different analytical roles: the first is the competitor and the second generates the state. Separating them reveals the complementary compactness and semicontinuity properties needed to pass to the diagonal limit.

The resulting direct method is naturally organized by the \emph{diagonal defect}
\begin{equation}
I(v):=\sup_{w\in Y}\bigl\{J(v;v)-J(w;v)\bigr\}.
\end{equation}
Zeros of $I$ are diagonal equilibria, while sequences with $I(v_h)\to0$ are almost equilibria of the complete path problem. The abstract existence theory developed below separates three logically distinct ingredients: vanishing-defect approximability, compactness of the corresponding almost equilibria, and lower/upper semicontinuity for the diagonal and competitor terms, respectively. The approximability requirement is substantive rather than cosmetic: coercivity and semicontinuity alone do not force the minimum defect to vanish. This formulation is close in spirit to the direct method for variational problems and to existence mechanisms for variational inequalities and convex integral functionals \cite{BrowderHess1972, LerayLions1965, Brezis1968, Zeidler1990, Showalter1997, Rockafellar1971IntegralII, Rockafellar1971ConvexIntegralDuality}, but here the direct method acts on a causal diagonal.

Two attainment results show how this architecture works in different growth regimes. For a representative superlinear dissipation model, the natural rate space is reflexive; the Volterra term supplies a favorable monotonicity contribution, the defect is coercive and weakly lower semicontinuous, and a temporal Galerkin construction converges strongly to the unique equilibrium. For a representative linear-growth model, by contrast, the natural ambient rate variable is a vector-valued Radon measure and the reconstructed state is a right-continuous $BV$ path. The work term then requires the centered state representative at atoms: this choice is singled out by the exact quadratic chain rule, so jumps need no \emph{ad hoc} energy correction. Energy balance yields a data-dependent total-variation bound and hence weak-star compactness. Viscous regularization provides an $L^2$ core, temporal measure-approximation spaces approximate entire trajectories, and the limiting measure equilibrium is unique; under the $W^{1,1}$ loading hypothesis used here, the compactness passage further shows that the equilibrium rate is absolutely continuous. Thus, measure-valued rates are not introduced merely as a formal relaxation but as the correct compact ambient variables at linear growth.

The same viewpoint also provides a direct relaxation method when exact equilibria are lost because the pure path space is not compact or the defect is not lower semicontinuous. Rather than postulating a new pointwise game on an enlarged space, we embed the pure problem in an enriched topology and define relaxed equilibria as limits of pure paths whose diagonal defects vanish. Equivalently, they are zeros of the sequential lower-semicontinuous envelope of the pure defect. This distinction matters: the enriched variables are chosen to retain the observables and costs that survive along recovery sequences, but the infinite extensions used to embed the pure payoff do not by themselves define meaningful best responses at genuinely relaxed points. The relaxation is therefore a closure of the \emph{evolutionary equilibrium principle}, not simply a pointwise convexification of its state variables.

The simple-shear model of Section~\ref{sec:local-memory-banding} makes this mechanism concrete in a setting where the relaxation can be solved completely. A non-convex kinetic potential permits coexistence of distinct local shear rates at a common stress, while a local internal variable evolves causally with the rate and therefore gives different bands different memories. The enriched variable is a Young measure on complete local histories. For this example the relaxed diagonal defect is computed exactly, every relaxed zero is generated by pure almost equilibria, and vanishing-defect approximability follows from an explicit frozen-best-response construction. The example also shows why replacing the kinetic potential by its pointwise convex envelope is generally insufficient when the internal-variable evolution depends nonlinearly on the microscopic rate.

The present paper develops these ideas in six steps:
\begin{itemize}
\item[i)] derive the integral evolution game and eliminate state--rate consistency through the reduced payoff $J(w;v)=G(w;Rv)$;
\item[ii)] establish abstract direct existence criteria based on vanishing-defect approximability, compactness, and complementary semicontinuity;
\item[iii)] prove existence, uniqueness, and temporal Galerkin convergence for a superlinear-growth energy--dissipation system;
\item[iv)] treat linear growth in vector-measure form, including the centered $BV$ reconstruction, energy-derived weak-star compactness, viscous regularization, and temporal measure approximation;
\item[v)] define relaxed diagonal equilibria as enriched-space limits of pure almost equilibria and characterize them as zeros of the relaxed defect; and
\item[vi)] solve exactly the relaxation of an simple-shear kinetic shear-banding model with local memory, identifying the history-space relaxed defect, proving pure recovery and existence, and relating two-rate coexistence to negative strain-rate sensitivity and dynamic strain aging.
\end{itemize}

Section~\ref{F2hxmX} derives the game and its diagonal reduction. Section~\ref{ZLpWEK} develops the abstract direct method and the diagonal-defect criteria. Section~\ref{sec:examples-attainment} verifies the theory in superlinear- and linear-growth settings, including the pathwise and measure-valued approximation constructions. Section~\ref{sec:direct-almost-equilibrium-relaxation} develops the enriched-space relaxation as the sequential closure of vanishing-defect pure paths. Section~\ref{sec:local-memory-banding} applies the construction to simple-shear kinetic shear banding with local memory and computes the sequential relaxation of the complete diagonal defect exactly. The concluding section summarizes the main results, connections with other work, computational implications, and possible extensions to more general causal evolution problems.

\section{Formal derivation of the evolution game} \label{F2hxmX}

This section makes the transition from the familiar instantaneous rate problem to the complete reduced payoff used throughout the paper. The game terminology makes explicit that the optimal rate at one time depends on the accumulated actions at earlier times. In the Volterra setting treated here, the unique state response can be eliminated exactly, so the analytical object after reduction is a diagonal variational relation on a path space rather than a continuum-player game in an abstract economic sense. This collapse is specific to the explicit reconstruction used below; for more general causal constraints it may be preferable or necessary to retain the coupled state--rate game. The formulation allows weak convergence and relaxation to act on the entire evolution at once. Throughout this section the derivation is formal; the spaces and hypotheses used in the rigorous results are specified in subsequent sections as needed.

\subsection{The classical rate problem}

We first recall the local rate formulation and isolate the causal consistency relation that will later be eliminated. Let $U$ be an affine space with translation vector space $V$. By an evolution we mean a map $u:[0,T]\to U$. We consider rate problems characterized by timewise minimization together with consistency:
\begin{subequations}\label{YrBoHs}
\begin{align}
    &   \label{BQpThW}
    v(t) \in \operatorname{argmin} \, \Phi(\cdot; u(t), t ) , 
    \quad t \in (0,T) ,
    \\ &    \label{uf5RHk}
    u(t) = u_0 + \int_0^t v(s) \, ds ,
    \quad t \in [0,T] .
\end{align}
\end{subequations}
Here $u_0\in U$ is the initial condition and $\Phi : V\times U\times(0,T) \to (-\infty,+\infty]$ is an extended-real-valued stage cost. At this formal stage we assume that all operations are meaningful.

A common energy--dissipation form is
\begin{equation} \label{xBmzun}
    \Phi(v;u,t) = \Psi(v;u) - \langle {f}(u,t) , v \rangle ,
\end{equation}
where $\Psi:V\times U\to(-\infty,+\infty]$ is a possibly state-dependent kinetic potential and ${f}:U\times(0,T)\to V^*$ is the driving force. If the force is conservative, there is an energy $E:U\times(0,T)\to\mathbb R$, differentiable in the state variable, such that ${f}(u,t)=-D_1E(u,t)$. Here and throughout, $D_1$ denotes differentiation with respect to the state variable. Then, the energy-dissipation functional
\begin{equation} \label{SB741X}
    \Phi(v;u,t) = \Psi(v;u) + \langle D_1E(u,t) , v \rangle
\end{equation}
encodes the instantaneous competition between dissipation and energy.

For convex $\Psi(\cdot;u)$, the rate condition may be written as a subdifferential inclusion and treated by monotone-operator or variational-inequality methods \cite{Minty1962, LerayLions1965, Brezis1968}. Gradient-flow and minimizing-movement formulations provide complementary constructions \cite{DeGiorgiMarinoTosques1980, AmbrosioGigliSavare2008}, while rate-independent systems replace superlinear dissipation by a one-homogeneous potential and an energy--dissipation balance \cite{MielkeTheil2004, MielkeRossiSavare2012}. The difficulty addressed here begins when the kinetic potential is non-convex in the rate and state/branch-dependent: pointwise minimizers may fail to exist and the missing compactness may involve both the rate and the state generated by it.

\subsection{Game-theoretical interpretation}

The family of minimizations in \eqref{YrBoHs} is indexed by time, but the choices are not independent. A rate selected at time $s$ changes the state seen by every later time $t>s$. This suggests the following heuristic game:
\begin{itemize}
\item[i)] the players are the times $t\in(0,T)$;
\item[ii)] the action of player $t$ is an instantaneous rate $v(t)\in V$;
\item[iii)] the state profile is the accumulated path $u(t)=u_0+\int_0^t v(s)\,ds$; and
\item[iv)] the stage cost of player $t$ is $\Phi(v(t);u(t),t)$.
\end{itemize}
The dependence on preceding actions has a leader--follower flavor reminiscent of dynamic Stackelberg games \cite{Stackelberg1934,BasarOlsder1999}. No continuum-player existence theorem is invoked here; the interpretation serves to expose causality and the best-response structure.

Formally, a pure equilibrium is a path satisfying
\begin{equation} \label{nOI8Kv}
    \Phi(\dot{u}(t);u(t),t) \leq \Phi(v;u(t),t) ,
    \quad
    \forall v \in V,
    \quad
    t \in (0,T) .
\end{equation}
The consistency constraint couples all players and is part of the Euler--Lagrange structure. Since the quantities in \eqref{nOI8Kv} are generally defined only almost everywhere, we proceed to replace the pointwise comparison (\ref{nOI8Kv}) by an integral one.

\subsection{Integral form of the game} \label{v5hrYD}

We now aggregate the timewise comparisons (\ref{nOI8Kv}) without losing their local content. Let ${X}$ be a state-path affine space and ${Y}$ a rate-path vector space, and restrict temporarily to paths $u\in {X}$ whose derivative belongs to ${Y}$. For every state $\xi\in U$ and time $t$, suppose, for the sake of the argument, that
\begin{equation} \label{oWSxJK}
    -\infty < m(\xi,t) = \inf_{\eta\in V} \Phi(\eta;\xi,t) < +\infty .
\end{equation}
For finite-cost pairs, further assume measurability and integrability of $\Phi(v(t);u(t),t)-m(u(t),t)\geq0$. For a measurable set $A\subset(0,T)$, define
\begin{equation} \label{EeOvri}
    G(v;u,A) := \int_A \Big( \Phi(v(t); u(t),t) - m(u(t),t) \Big) \, dt .
\end{equation}
The normalization by $m$ makes $G(v;u,\cdot)$ a nonnegative absolutely continuous measure. However, since $m$ is independent of the competitor rate, the normalization is optional and does not affect any best-response or diagonal comparison. In particular, subsequent applications allow competitor sections that, unlike (\ref{oWSxJK}), are unbounded below. 

The integral equilibrium condition is the measure domination
\begin{equation} \label{Nl3dw5}
    G(\dot{u};u,A) \leq G(v;u,A) ,
    \quad
    \forall v \in {Y},
    \quad
    A \in \mathcal{A}(0,T) ,
\end{equation}
where $\mathcal{A}(0,T)$ is the class of open subsets of $(0,T)$. Assume that ${Y}$ is stable under pasting: for every $v,z \in Y$ and $A\in\mathcal{A}(0,T)$,
\begin{equation} \label{ZKNHxn}
    w_A(t):=
    \begin{cases}
        v(t),& t\in A , \\
        z(t),& t\notin A
    \end{cases}
\end{equation}
belongs to ${Y}$. If the whole-interval comparison holds for every competitor, we can compare $\dot u$ with $w_A$ and cancel the identical contribution on $(0,T)\setminus A$ to obtain the comparison on $A$. The converse follows by choosing $A=(0,T)$. Consequently,
\begin{equation}
\text{(\ref{Nl3dw5})}
\;\,\Longleftrightarrow\;\,
G(\dot u;u,(0,T))\leq G(v;u,(0,T)) ,
\;\, \forall v\in {Y}.
\end{equation}
This measure comparison retains local information Lebesgue almost-everywhere. If, in addition, ${Y}$ contains the admissible measurable selections of pointwise competitors, it is equivalent to the almost-everywhere pointwise condition \eqref{nOI8Kv}. 

For the remainder of the paper we abbreviate $G(v;u,(0,T))$ as 
\begin{equation} \label{qdS8S8}
    G(v;u) := \int_0^T \Big( \Phi(v(t); u(t),t) - m(u(t),t) \Big) \, dt .
\end{equation}

At the path level, consistency can be represented as the indicator payoff
\begin{equation}
F(u;v)=I_\Gamma(u;v),
\quad
\Gamma=\{(Rv,v) \,:\, v\in {Y}\}\subset {X}\times {Y}
\end{equation}
where the reconstruction operator $R : {Y} \to {X}$ and the Volterra operator $K : {Y} \to {X}$ are defined as
\begin{equation} \label{7TzojC}
    Rv(t) := u_0+Kv(t) , 
    \quad
    Kv(t):=\int_0^t v(s)\,ds ,
\end{equation}
respectively. 

For economy of notation, operator application is understood before time evaluation: $Rv(t):=(Rv)(t)$ and $Kv(t):=(Kv)(t)$, and similarly for the other operators used below. 

The collective Nash conditions are therefore
\begin{subequations} \label{eq:Nash}
\begin{align}
    &   \label{eq:Nash-F}
    F(u^*;v^*)\leq F(u;v^*) , &&\forall u\in {X},
    \\ &    \label{eq:Nash-G}
    G(v^*;u^*)\leq G(v;u^*) , &&\forall v\in {Y}.
\end{align}
\end{subequations}
Fig.~\ref{fig:game-problem-3-2} illustrates this two-player best-response structure in a simple finite-dimensional setting. However, in the present setting the first condition (\ref{eq:Nash-F}) has a unique finite response, namely $u^*=Rv^*$. Hence, the state player can be eliminated exactly rather than handled as a part of a generic two-player best-response system. 

\begin{figure}[h]
  \centering
  \includegraphics[width=0.5\textwidth]{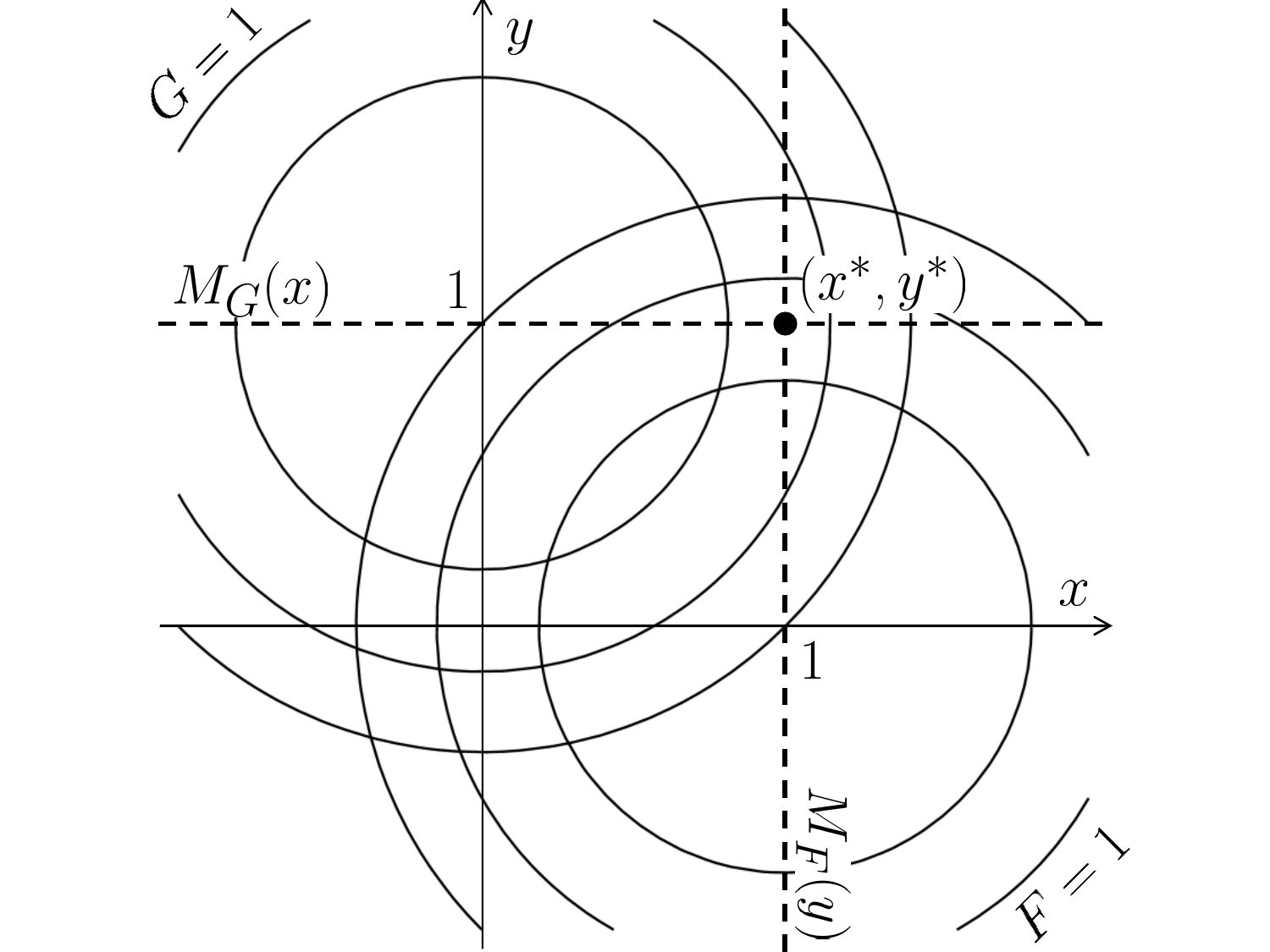}
  \caption{Two-player best-response picture before the consistency reduction, for
  ${X}={Y}=\mathbb{R}$,
  $F(x;y)=\frac12(x-1)^2+\frac12y^2$, and
  $G(y;x)=\frac12x^2+\frac12(y-1)^2$. The best responses intersect at
  $(x^*,y^*)$. In the evolution problem the state response is the
  graph $u=Rv$; eliminating it produces the diagonal reduced relation
  studied below.}
  \label{fig:game-problem-3-2}
\end{figure}

\subsection{Diagonal reduction} \label{e56Lxq}

For $v,w\in {Y}$, set
\begin{equation}
\label{eq:complete-reduced-payoff-formal}
    {J}(w;v):=G(w;Rv).
\end{equation}
The first argument is the competitor rate; the second, written after the semicolon, is the rate that generates the state. Conditions \eqref{eq:Nash} are equivalent to the single diagonal comparison principle
\begin{equation}
\label{eq:formal-diagonal-equilibrium}
    {J}(v^*;v^*)\leq {J}(w;v^*),
    \quad\forall w\in {Y}.
\end{equation}
If $w\mapsto{J}(w;v)$ is proper, convex, and lower-semicontinuous, then \eqref{eq:formal-diagonal-equilibrium} is equivalent to the inclusion problem
\begin{equation} \label{6ZtPzU}
    0\in L(v^*) , \quad L(v):=\partial_1{J}(v;v)
\end{equation}
where $L(v)$ is the complete diagonal response and, for $v,w\in {Y}$, 
\begin{equation}
    \partial_1J(w;v)
    :=
    \{
    {f}\in {Y}^*:
    J({z};v)
    \geq
    J(w;v)
    +
    \langle {f},{z}-w\rangle_{{Y}^*,{Y}}, \;\,
    \forall {z}\in {Y}
    \}
\end{equation}
is the convex subdifferential of the section $J(\cdot;v)$ at $w$. Crucially, both approaches test dissipation and energy together. 

\section{A game-theoretical direct method}
\label{ZLpWEK}

The diagonal reduction of Section~\ref{F2hxmX} turns the equilibrium problem into the problem of making a nonnegative diagonal defect vanish. The direct method can then be applied provided that approximate diagonal equilibria exist, the corresponding defect is coercive, and the two occurrences of the rate variable have the complementary semicontinuity properties required to pass to the limit. This compactness--semicontinuity architecture is the familiar one behind direct variational and monotonicity methods, adapted here to a causal diagonal relation; compare \cite{BrowderHess1972,LerayLions1965,Brezis1968,Zeidler1990,Showalter1997}. The approximability condition is essential: coercivity and semicontinuity alone do not force the diagonal defect to have vanishing infimum.

The first result keeps the two arguments of $J$ separate. An almost-equilibrium sequence supplies the approximants, compactness produces a candidate limit, and the lower/upper semicontinuity assumptions pass the diagonal and competitor sides in opposite directions. This argument is analogous in spirit to standard equilibrium and variational-inequality existence arguments, see, e.g., \cite{Fan1961, BlumOettli1994}.

\begin{proposition}[Direct existence of a diagonal equilibrium] \label{aBgxdV}
Let $({Y},d)$ be a metric space, and let $J:{Y}\times {Y}\to(-\infty,+\infty]$ satisfy $J(v;v)<+\infty$ for every $v\in {Y}$. Suppose, in addition, that the following conditions hold:
\begin{enumerate}
\item[\rm(i)] \emph{Approximate diagonal equilibria:}
For every $\varepsilon>0$, there exists $v_{\varepsilon}\in {Y}$ such that
\begin{equation}
J(v_{\varepsilon};v_{\varepsilon})
\leq J(w;v_{\varepsilon})+\varepsilon ,
\quad\forall w\in {Y}.
\end{equation}
\item[\rm(ii)] \emph{Sequential compactness:}
There exists $a>0$ such that the set
\begin{equation}
K_a:=
\left\{
v\in {Y}:
J(v;v)\leq J(w;v)+a , 
\quad \forall w\in {Y}
\right\}
\end{equation}
is sequentially compact in ${Y}$.
\item[\rm(iii)] \emph{Diagonal sequential lower semicontinuity:}
For every sequence $(v_h)\subset K_a$ such that $v_h\to v$ in ${Y}$,
\begin{equation}
J(v;v)
\leq
\liminf_{h\to\infty}J(v_h;v_h).
\end{equation}

\item[\rm(iv)] \emph{Sequential upper semicontinuity in the second argument:}
For every $w\in {Y}$ and every sequence $(v_h)\subset K_a$ such that
$v_h\to v$ in ${Y}$,
\begin{equation}
\limsup_{h\to\infty}J(w;v_h)
\leq
J(w;v).
\end{equation}
\end{enumerate}

Then, there exists $v^*\in {Y}$ such that
\begin{equation}
J(v^*;v^*)
\leq
J(w;v^*) , 
\quad \forall w\in {Y}.
\end{equation}
Thus, $v^*$ is a diagonal equilibrium.
\end{proposition}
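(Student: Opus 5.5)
The plan is to run the direct method along a sequence of almost equilibria whose defect tolerance tends to zero. Take $\varepsilon_h:=\min\{a,1/h\}$ and let $v_h:=v_{\varepsilon_h}$ be the points supplied by (i). Then
\begin{equation*}
J(v_h;v_h)\leq J(w;v_h)+\varepsilon_h , \quad \forall w\in {Y}.
\end{equation*}
Since $\varepsilon_h\leq a$, every $v_h$ belongs to $K_a$. By the sequential compactness (ii), a subsequence, which we do not relabel, converges in $(Y,d)$ to some $v^*\in Y$. That subsequence still lies in $K_a$, so hypotheses (iii) and (iv) may be applied along it.

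Next fix an arbitrary competitor $w\in Y$ and pass to the limit on both sides of the almost-equilibrium inequality. On the left, (iii) gives $J(v^*;v^*)\leq\liminf_h J(v_h;v_h)$. On the right, (iv) gives $\limsup_h\bigl(J(w;v_h)+\varepsilon_h\bigr)\leq J(w;v^*)$, because $\varepsilon_h\to0$. Chaining these estimates yields
\begin{equation*}
J(v^*;v^*)\leq \liminf_{h} J(v_h;v_h)\leq \limsup_{h}\bigl(J(w;v_h)+\varepsilon_h\bigr)\leq J(w;v^*).
\end{equation*}
Since $w$ was arbitrary, $v^*$ is a diagonal equilibrium.

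The only step that needs care is the extended-real bookkeeping, because $J$ takes values in $(-\infty,+\infty]$. If $J(w;v^*)=+\infty$, the inequality for that $w$ holds trivially. If $J(w;v_h)=+\infty$ for infinitely many $h$, then (iv) forces $J(w;v^*)=+\infty$, so again there is nothing to prove. Otherwise all terms eventually lie in $(-\infty,+\infty)$, the middle inequality above is legitimate termwise, and adding the null sequence $\varepsilon_h$ does not change the $\limsup$. The standing assumption $J(v;v)<+\infty$ ensures that the left-hand sides are finite, so the comparison is meaningful.

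The main point to watch, rather than a genuine obstacle, is that (iii) and (iv) are only assumed along sequences in $K_a$. The proof must therefore keep the whole approximating sequence, and its subsequence, inside $K_a$, which is exactly why $\varepsilon_h\leq a$ is imposed from the start.
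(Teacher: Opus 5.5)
Your proposal is correct and follows the paper's proof essentially step for step. You take almost equilibria with tolerances $\varepsilon_h\le a$ so that they lie in $K_a$, extract a convergent subsequence by compactness, and chain (iii) and (iv) for each fixed competitor $w$; your extended-real bookkeeping is a harmless addition.
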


\begin{proof}
Let $\varepsilon_h\downarrow0$. By assumption {\rm(i)}, for every $h$
there exists $v_h\in {Y}$ such that
\begin{equation}
J(v_h;v_h)
\leq
J(w;v_h)+\varepsilon_h , 
\quad \forall w\in {Y}.
\end{equation}
Since $\varepsilon_h\leq a$ for all sufficiently large $h$, after discarding finitely many terms we have $v_h\in K_a$. Assumption {\rm(ii)} therefore yields a subsequence, not relabeled, and a point $v^*\in K_a$ such that $v_h\to v^*$ in $Y$. Fix $w\in {Y}$. By assumptions {\rm(iii)} and {\rm(iv)},
\begin{equation}
J(v^*;v^*)
\leq \liminf_{h\to\infty}J(v_h;v_h) 
\leq \limsup_{h\to\infty}
\bigl(J(w;v_h)+\varepsilon_h\bigr) 
=\limsup_{h\to\infty}J(w;v_h) 
\leq J(w;v^*).
\end{equation}
Since $w\in {Y}$ is arbitrary, $v^*$ is a diagonal equilibrium.
\end{proof}

An alternative characterization of existence can be based on the \emph{diagonal defect function}. Define the best-response value
\begin{equation}
\label{eq:best-response-value}
    m(v):=\inf_{w\in {Y}} J(w;v),
    \quad
    v\in {Y},
\end{equation}
and the diagonal defect
\begin{equation}
\label{eq:diagonal-defect}
    I(v):=
    \begin{cases}
    J(v;v)-m(v),&m(v)>-\infty,\\
    +\infty,&m(v)=-\infty.
    \end{cases}
\end{equation}
Alternatively, 
\begin{equation}
I(v):=\sup_{w\in Y}\bigl\{J(v;v)-J(w;v)\bigr\}.
\end{equation}
Since $J(v;v)<+\infty$, one has $m(v)\in[-\infty,\mathbb R]$ and $I(v)\in[0,+\infty]$. Moreover,
\begin{equation}
\label{eq:defect-equilibrium-equivalence}
    I(v)=0
    \quad\Longleftrightarrow\quad
    J(v;v)\leq J(w;v) ,
    \quad\forall w\in {Y}.
\end{equation}

Thus, zeros of $I$ are precisely the diagonal equilibria of \eqref{eq:formal-diagonal-equilibrium}. We denote by
\begin{equation}
\label{eq:equilibrium-set}
M(J)
:=
\bigl\{
v\in {Y} \,:\, I(v)=0
\bigr\}
\end{equation}
the set of diagonal equilibria. No attainment of $m(v)$ is required. The extended-value convention also covers competitor sections that are unbounded below.

The scalar defect packages the best-response comparison into a single nonnegative functional. Once this reduction has been made, the existence argument takes the classical direct-method form: construct a minimizing sequence with value tending to zero, extract a convergent subsequence, and use lower semicontinuity to retain the zero value in the limit.

\begin{proposition}[Direct existence from the diagonal defect]\label{prop:direct-defect}
Let $({Y},d)$ be a metric space, and let $I:{Y}\to[0,+\infty]$. Suppose that the following conditions hold:
\begin{enumerate}
\item[\rm(i)] \emph{Approximability:}
\begin{equation}
\inf_{v\in {Y}} I(v)=0.
\end{equation}
\item[\rm(ii)] \emph{Sequential compactness:}
There exists $a>0$ such that the sublevel set
\begin{equation}
K_a:=\{v\in {Y}:I(v)\leq a\}
\end{equation}
is sequentially compact in ${Y}$.
\item[\rm(iii)] \emph{Sequential lower semicontinuity:}
For every sequence $(v_h)\subset K_a$ such that $v_h\to v$ in ${Y}$,
\begin{equation}
I(v)\leq\liminf_{h\to\infty}I(v_h).
\end{equation}
\end{enumerate}
Then, there exists $v^*\in K_a$ such that
\begin{equation}
I(v^*)=0.
\end{equation}
In particular, $I$ attains its minimum on ${Y}$.
\end{proposition}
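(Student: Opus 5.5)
The plan is the classical direct method, mirroring the proof of Proposition~\ref{aBgxdV} but with the single scalar functional $I$ in place of the two separate arguments of $J$.

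\textbf{Step 1: a minimizing sequence.} Fix a sequence $\varepsilon_h\downarrow0$. By the approximability assumption (i), for every $h$ there is $v_h\in Y$ with $I(v_h)\leq\varepsilon_h$. Since $\varepsilon_h\leq a$ for all sufficiently large $h$, after discarding finitely many terms we have $(v_h)\subset K_a$. We also record that $I(v_h)\to0$, because $0\leq I(v_h)\leq\varepsilon_h$.

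\textbf{Step 2: compactness.} Assumption (ii) yields a subsequence, not relabeled, and a point $v^*\in K_a$ such that $v_h\to v^*$ in $Y$. Sequential compactness of $K_a$ is understood to mean that limits stay in $K_a$, and this is what places $v^*$ in $K_a$.

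\textbf{Step 3: lower semicontinuity.} The sequence $(v_h)$ lies in $K_a$ and converges to $v^*$, so assumption (iii) applies and gives
\begin{equation*}
0\leq I(v^*)\leq\liminf_{h\to\infty}I(v_h)=0 .
\end{equation*}
Hence $I(v^*)=0$. Since $I\geq0$ everywhere, $v^*$ is a global minimizer, which is the final assertion.

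There is no genuine obstacle in this argument. The only points needing care are bookkeeping:
\begin{itemize}
\item[i)] The approximants must be placed inside $K_a$ before invoking (ii) and (iii), because both hypotheses are localized to $K_a$. This is handled by taking the tail of the sequence, where $\varepsilon_h\leq a$.
\item[ii)] Values $I=+\infty$ cause no difficulty. They are excluded from $K_a$, and the liminf inequality is used only along finite values.
\end{itemize}
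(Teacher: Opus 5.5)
Your proof is correct and follows the paper's argument step for step: pick $v_h$ with $I(v_h)\to0$, discard finitely many terms so the sequence lies in $K_a$, extract a subsequence converging to some $v^*\in K_a$ by (ii), and conclude $0\leq I(v^*)\leq\liminf_{h\to\infty} I(v_h)=0$ by (iii). The membership $v^*\in K_a$ also follows a posteriori from $I(v^*)=0<a$, so nothing depends on how sequential compactness is read at that step.
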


\begin{proof}
By assumption {\rm(i)}, choose a sequence $(v_h)\subset {Y}$ such that $I(v_h)\to0$. Since $a>0$, after discarding finitely many terms we have $v_h\in K_a$. Assumption {\rm(ii)} therefore yields a subsequence, not relabeled, and $v^*\in K_a$ such that $v_h\to v^*$ in ${Y}$. Assumption {\rm(iii)} gives $0\leq I(v^*)\leq\liminf_{h\to\infty}I(v_h)=0$, hence $I(v^*)=0$.
\end{proof}

The two formulations are not independent. The next corollary shows that the complementary semicontinuity assumptions on $J$ automatically supply lower semicontinuity of the induced defect $I$, so Proposition~\ref{prop:direct-defect} recovers the preceding $J$-based criterion.

\begin{corollary}[The $J$-criterion implies the $I$-criterion] \label{cor:J-implies-I}
Let $(Y,d)$ and $J:Y\times Y\to(-\infty,+\infty]$ satisfy the assumptions of Proposition~\ref{aBgxdV}. Then, $I:Y\to[0,+\infty]$ and, for every $\varepsilon\geq 0$,
\begin{equation}
I(v)\leq\varepsilon
\quad\Longleftrightarrow\quad
J(v;v)\leq J(w;v)+\varepsilon ,
\quad\forall w\in Y.
\end{equation}
Moreover, $I$ satisfies the assumptions of Proposition~\ref{prop:direct-defect}. Consequently, there exists $v^*\in Y$ such that $I(v^*)=0$, or, equivalently,
\begin{equation}
J(v^*;v^*)\leq J(w;v^*) ,
\quad\forall w\in Y.
\end{equation}
\end{corollary}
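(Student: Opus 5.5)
The plan is to verify the three hypotheses of Proposition~\ref{prop:direct-defect} for the induced defect $I$ and then invoke that proposition. The first step is to show that $I$ takes values in $[0,+\infty]$ and satisfies the stated sublevel equivalence. Since $J(v;v)<+\infty$, the value $m(v)$ lies in $[-\infty,J(v;v)]$. Testing $w=v$ gives $I(v)\geq0$. For $\varepsilon\geq0$, the inequality $I(v)\leq\varepsilon$ forces $m(v)>-\infty$, hence $J(v;v)\leq m(v)+\varepsilon\leq J(w;v)+\varepsilon$ for all $w$. Conversely, the uniform inequality $J(v;v)\leq J(w;v)+\varepsilon$ bounds $m(v)$ from below by $J(v;v)-\varepsilon>-\infty$, so taking the infimum over $w$ gives $I(v)\leq\varepsilon$. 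This case split on whether $m(v)$ is finite is the only point needing care. In particular, the sublevel set $\{I\leq a\}$ coincides with the set $K_a$ of Proposition~\ref{aBgxdV}.

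With the sublevel equivalence in hand, approximability and compactness transfer directly. Hypothesis (i) of Proposition~\ref{aBgxdV} gives, for each $\varepsilon>0$, a point $v_\varepsilon$ with $I(v_\varepsilon)\leq\varepsilon$, so $\inf I=0$. Hypothesis (ii) transfers verbatim because the two sets $K_a$ coincide.

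The main step is lower semicontinuity of $I$ along sequences in $K_a$. Take $(v_h)\subset K_a$ with $v_h\to v$. By compactness $v\in K_a$, so $m(v)>-\infty$ and $I(v)=J(v;v)-m(v)$. Fix an arbitrary $w$. Using (iii) and (iv) of Proposition~\ref{aBgxdV}, I would estimate
\begin{equation*}
J(v;v)-J(w;v)\leq \liminf_{h} J(v_h;v_h)-\limsup_h J(w;v_h)\leq \liminf_h\bigl(J(v_h;v_h)-J(w;v_h)\bigr)\leq\liminf_h I(v_h).
\end{equation*}
The obstacle here is extended arithmetic. The quantities $J(v_h;v_h)$ are finite, and $J(w;v_h)\geq m(v_h)>-\infty$, so each difference is well defined. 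If $J(w;v)=+\infty$, the left side is $-\infty$ and the bound is trivial. Otherwise (iv) gives $\limsup_h J(w;v_h)<+\infty$, so the subtraction in the first inequality is legitimate, with the convention $+\infty-c=+\infty$ if the $\liminf$ is infinite. Taking the supremum over $w$ yields $I(v)\leq\liminf_h I(v_h)$.

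Proposition~\ref{prop:direct-defect} then gives $v^*$ with $I(v^*)=0$. By the equivalence at $\varepsilon=0$, this is the stated diagonal equilibrium.
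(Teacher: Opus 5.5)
Your proposal is correct and follows the paper's proof essentially step for step. You identify $\{I\le a\}$ with $K_a$ through the sublevel equivalence, transfer approximability and compactness, and get lower semicontinuity of $I$ on $K_a$ by combining (iii) and (iv) for each fixed $w$ before taking the supremum; your extra care with the case $m(v)=-\infty$ and with extended-real subtraction only spells out what the paper leaves implicit in its supremum formula.
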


\begin{proof}
Since $w=v$ is admissible in the definition of $I$, one has $I(v)\geq J(v;v)-J(v;v)=0$, and hence $I:Y\to[0,+\infty]$. Moreover, by the definition of the supremum, $I(v)\leq\varepsilon$ if and only if $J(v;v)-J(w;v)\leq\varepsilon$ for every $w\in Y$, which proves the stated equivalence.

The approximability assumption of Proposition~\ref{aBgxdV} therefore gives, for every $\varepsilon>0$, some $v_\varepsilon\in Y$ with $I(v_\varepsilon)\leq\varepsilon$, and hence $\inf_{v\in Y}I(v)=0$. Likewise, for the $a>0$ in Proposition~\ref{aBgxdV}, the sublevel set $\{v\in Y:I(v)\leq a\}$ coincides with the corresponding almost-equilibrium set $K_a$ and is therefore sequentially compact.

It remains to prove sequential lower semicontinuity on $K_a$. Let $(v_h)\subset K_a$ with $v_h\to v$ in $Y$. For every fixed $w\in Y$, $I(v_h)\geq J(v_h;v_h)-J(w;v_h)$, and the semicontinuity assumptions of Proposition~\ref{aBgxdV} give
\begin{equation}
\liminf_{h\to\infty}I(v_h)
\geq \liminf_{h\to\infty}J(v_h;v_h)-\limsup_{h\to\infty}J(w;v_h)
\geq J(v;v)-J(w;v).
\end{equation}
Taking the supremum over $w\in Y$ yields $\liminf_{h\to\infty}I(v_h)\geq I(v)$. Thus $I$ satisfies all the assumptions of Proposition~\ref{prop:direct-defect}, which gives $v^*\in Y$ such that $I(v^*)=0$. By the stated equivalence with $\varepsilon=0$, this is equivalent to $J(v^*;v^*)\leq J(w;v^*)$ for every $w\in Y$.
\end{proof}

\begin{remark}[Reflexive Banach spaces]
{\rm The preceding argument applies equally with weak convergence on a reflexive Banach space $Y$. If $\inf_{v\in Y} I(v)=0$, $I$ is coercive, and $I$ is weakly sequentially lower semicontinuous on the relevant defect sublevel, then coercivity and reflexivity give the weak sequential compactness required in Proposition~\ref{prop:direct-defect}, and hence a diagonal equilibrium exists. By Corollary~\ref{cor:J-implies-I}, the corresponding weak sequential assumptions on $J$ give the same conclusion. No metrizability of the weak topology is required.}\hfill$\square$
\end{remark}

\begin{remark}[Necessity of approximability]
{\rm The condition $\inf_{v\in {Y}} I(v)=0$ is not a consequence of coercivity and semicontinuity. For example, let ${Y}=\mathbb{R}$ and define
\begin{equation}
    f(v):=v-\sqrt{v^2+1},
    \quad
    J(w;v):=(w-f(v))^2.
\end{equation}
Then, \begin{equation}
    m(v)=0,
    \quad
    I(v)=J(v;v)=v^2+1.
\end{equation}
The defect $I$ is continuous and coercive, and $J$ is continuous in both variables, but $\inf_{v\in {Y}} I(v)=1$. A diagonal equilibrium would require $v=f(v)$, which is impossible. Hence the zero-defect approximability assumption is indispensable for the direct argument above.}\hfill$\square$
\end{remark}

\section{Examples of attainment}
\label{sec:examples-attainment}

The abstract criteria of Section~\ref{ZLpWEK} separate three issues: the existence of vanishing-defect competitors, compactness of the resulting almost equilibria, and semicontinuity of the two occurrences of the rate. The next two examples show how these ingredients arise in two different settings. Section~\ref{subsec:superlinear-energy-dissipation} uses weak compactness in a reflexive Bochner space. Section~\ref{subsec:linear-growth-energy-dissipation} is formulated from the outset on $H$-valued Radon measures and right-continuous $BV$ states. A centered state representative gives the exact quadratic chain rule at atoms, while the energy identity supplies a data-dependent total-variation bound and hence weak-star compactness. The $W^{1,1}$ regularity of the loading is used only afterward to prove that the limiting equilibrium measure is absolutely continuous.

\subsection{Superlinear-growth energy--dissipation systems}
\label{subsec:superlinear-energy-dissipation}

We begin with an example that illustrates existence of equilibria in a reflexive Banach space setting. 

Let $H$ be a real Hilbert space, let $\alpha>0$, and let $1<p\leq2$, with conjugate exponent $p'=p/(p-1)$. Set ${X}:=W^{1,p}(0,T;H)$ and ${Y}:=L^p(0,T;H)$.
Fix $u_0\in H$. The reconstruction operator $R$ in \eqref{7TzojC} maps ${Y}$ continuously into ${X}$, with $Rv(0)=u_0$ and $\dot{Rv}=v$ almost everywhere. Let $S\in\mathcal L(H)$ be self-adjoint and positive semidefinite, and let $h\in L^{p'}(0,T;H)$. Consider the dissipation and energy functions
\begin{equation}
\label{eq:weighted-quadratic-data}
\Psi(\eta):=\alpha\|\eta\|_H^p,
\quad
E(\xi,t):=\frac12(S\xi,\xi)_H-(h(t),\xi)_H ,
\end{equation}
respectively. The corresponding complete payoff is
\begin{equation}
\label{eq:superlinear-growth-complete-payoff}
J(w;v)
:=
\int_0^T
\Big(
\alpha\|w(t)\|_H^p
+
\bigl(S Rv(t)-h(t),w(t)\bigr)_H
\Big)\,dt .
\end{equation}
The section $J(\cdot;v)$ is proper, coercive, weakly lower semicontinuous, and strictly convex on ${Y}$. These are the standard convex-analytic properties used in monotone-operator and convex integral-functional theory \cite{BrowderHess1972,Zeidler1990,Rockafellar1971IntegralII,Rockafellar1971ConvexIntegralDuality}. The point specific to the present formulation is that the Volterra contribution remains inside the diagonal defect and supplies the nonnegative term used below.

The next proposition verifies the abstract architecture and, at the same time, produces a constructive pathwise approximation.

\begin{proposition}[Direct existence, uniqueness, and temporal Galerkin approximation]
\label{prop:superlinear-attainment}
Under the preceding assumptions, there exists a unique pair $(u^*,v^*)\in {X}\times {Y}$ such that $u^*(0)=u_0$, $u^*=Rv^*$, and
\begin{equation}
\label{eq:superlinear-diagonal-equilibrium}
J(v^*;v^*)
\leq
J(w;v^*) ,
\quad
\forall w\in {Y}.
\end{equation}
Equivalently, $u^*$ is the unique state path in ${X}$ satisfying $u^*(0)=u_0$ and
\begin{equation}
\label{eq:superlinear-state-equation}
\alpha p\|\dot u^*(t)\|_H^{p-2}\dot u^*(t)
+
Su^*(t)-h(t)
=
0 ,
\quad
\text{for a.e. } t\in(0,T),
\end{equation}
where the first term is understood to be zero when $\dot u^*(t)=0$. Moreover, there is a sequence $v_n\to v^*$ strongly in ${Y}$, such that $I(v_n)\to I(v^*)=0$.
\end{proposition}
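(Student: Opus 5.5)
The plan is to verify the hypotheses of Proposition~\ref{prop:direct-defect} in the weak topology of the reflexive space ${Y}=L^p(0,T;H)$, as in the Remark on reflexive Banach spaces. Afterwards I will settle uniqueness and the Galerkin statement through a monotonicity estimate that comes from the Volterra term.

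\emph{Step 1: best response and defect.} For fixed $v$, the section $J(\cdot;v)$ is an integral of $\alpha\|w\|^p+(g,w)$ with $g:=SRv-h\in L^{p'}(0,T;H)$. It is minimized pointwise, at $w=\beta(g)$ with $\beta(g):=-(\|g\|/(\alpha p))^{p'-1}g/\|g\|$, and one gets $m(v)=-c_p\int_0^T\|SRv-h\|^{p'}\,dt$ with $c_p=(p-1)p^{-p'}\alpha^{1-p'}>0$. Hence
\begin{equation*}
I(v)=\int_0^T\Big(\alpha\|v\|^p+(SRv-h,v)+c_p\|SRv-h\|^{p'}\Big)\,dt .
\end{equation*}
By Fenchel--Young the integrand is pointwise nonnegative, and it vanishes exactly when $v=\beta(SRv-h)$. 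This is the equation \eqref{eq:superlinear-state-equation} with $u^*=Rv$. This gives the stated equivalence with the state equation, including the convention at $\dot u^*=0$.

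\emph{Step 2: coercivity and lower semicontinuity.} I use the identity $\int_0^T(SRv,v)\,dt=\tfrac12(SKv(T),Kv(T))+(Su_0,Kv(T))$, obtained as in the introduction. Then
\begin{equation*}
I(v)\geq\alpha\|v\|_{Y}^p-C(\|u_0\|,\|h\|_{L^{p'}})\,\|v\|_{Y},
\end{equation*}
so $I$ is coercive because $p>1$. For weak lower semicontinuity, $K$ is compact from $L^p(0,T;H)$ into $C([0,T];H)$ only if $H$ is finite-dimensional, so I will not rely on strong convergence of $Rv_h$. Instead, $Kv_h(T)\rightharpoonup Kv(T)$ weakly in $H$, so $\tfrac12(SKv(T),Kv(T))$ is weakly lsc because it is a convex continuous quadratic form. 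The terms $\int\alpha\|v\|^p$ and $\int(h,v)$ are convex or linear. The remaining term $c_p\int\|SRv-h\|^{p'}$ is the composition of a convex continuous functional with the affine continuous map $v\mapsto SRv-h$ from $Y$ to $L^{p'}$, which is bounded since $p'\geq p$ on a bounded interval. It is therefore weakly lsc. This is the main subtle point: one must show that each term in the defect is separately convex, or a weakly continuous linear term, using the causal identity.

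\emph{Step 3: approximability and Galerkin.} Let $Y_n$ be the piecewise-constant $H$-valued functions on a uniform partition of $[0,T]$ with $n$ intervals. On each interval $[t_{k-1},t_k]$, I solve the implicit step $\alpha p\|v_k\|^{p-2}v_k+S(u_{k-1}+\tau v_k)-\bar h_k=0$, where $\bar h_k$ is the average of $h$ over the interval. This is the optimality condition of the strictly convex coercive function $\alpha\tau\|\eta\|^p+\tfrac{\tau^2}{2}(S\eta,\eta)+\tau(Su_{k-1}-\bar h_k,\eta)$, so $v_k$ exists and is unique. The discrete energy estimate bounds $v_n$ in $Y$. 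For the limit I would use monotonicity. Since the dissipation term is strongly monotone-like, with the $p\le2$ estimate $(\phi(a)-\phi(b),a-b)\gtrsim\|a-b\|^2(\|a\|+\|b\|)^{p-2}$, and the Volterra term contributes $\tfrac12(SK(v-w)(T),K(v-w)(T))\geq0$, subtracting the equations for two solutions gives uniqueness. Testing the discrete residual against $v_n-v^*$ gives $v_n\to v^*$ strongly in $L^p$, provided the consistency residual, which comes from $h-\bar h$ and the implicit/explicit lag $S(Rv_n-u_{\text{node}})$, tends to zero in $L^{p'}$. Finally, $I$ is continuous in the strong topology of $Y$, by dominated convergence in the formula of Step 1, so $I(v_n)\to0$. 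This also yields approximability, and existence then follows from Proposition~\ref{prop:direct-defect}. The strong-convergence estimate for $p<2$ is where I expect the work to concentrate. A fallback is weak convergence plus $\limsup\int(\phi(v_n),v_n-v^*)\le0$, followed by uniform convexity of $L^p$.
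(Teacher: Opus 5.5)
Your Steps 1--2 agree with the paper: the explicit defect via pointwise Fenchel--Young, coercivity, and term-by-term weak lower semicontinuity. One incidental correction: $K$ is bounded from $L^p$ into $L^{p'}$ because it maps $L^1(0,T;H)$ into $C([0,T];H)$. It is not because $p'\geq p$, which on a bounded interval gives the opposite inclusion $L^{p'}\subset L^p$.

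\textbf{The gap is in the logic of Step 3, where existence becomes circular.} You obtain $v_n\to v^*$ by subtracting the equation satisfied by $v^*$ from the discrete equation, which presupposes that an equilibrium $v^*$ exists. You then use $I(v_n)\to I(v^*)=0$ as the approximability hypothesis of Proposition~\ref{prop:direct-defect} in order to \emph{produce} $v^*$.

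The repair uses only your own ingredients: prove $I(v_n)\to0$ directly. On $(t_{k-1},t_k)$ the implicit step reads $v_n=\beta(\tilde g_n)$ with $\tilde g_n:=SRv_n(t_k)-\bar h_k$. Fenchel--Young is therefore an equality for $\tilde g_n$, and with $g_n:=SRv_n-h$,
\begin{equation*}
I(v_n)=\int_0^T\Bigl((g_n-\tilde g_n,v_n)_H+c_p\bigl(\|g_n\|_H^{p'}-\|\tilde g_n\|_H^{p'}\bigr)\Bigr)\,dt .
\end{equation*}
Your discrete energy estimate does hold for $S\geq0$, since $(Su_k,u_k-u_{k-1})_H\geq\frac12(Su_k,u_k)_H-\frac12(Su_{k-1},u_{k-1})_H$. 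Combine it with $\|Rv_n(t)-Rv_n(t_k)\|_H\leq\tau^{1/p'}\|v_n\|_{L^p(t_{k-1},t_k)}$ and $P_nh\to h$ in $L^{p'}$. This gives $\|g_n-\tilde g_n\|_{L^{p'}}\to0$ with $g_n,\tilde g_n$ bounded in $L^{p'}$, hence $I(v_n)\to0$ without reference to any $v^*$. Step 2 and reflexivity then give a weak cluster point $\bar v$ with $I(\bar v)=0$. Only after that does your monotonicity argument deliver uniqueness and strong convergence of the whole sequence.

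Alternatively, your fallback becomes non-circular if $v^*$ is read as a weak cluster point $\bar v$. The causal identity gives $\liminf_n\langle SKv_n,v_n-\bar v\rangle\geq0$. The discrete equation, together with $\|g_n-\tilde g_n\|_{L^{p'}}\to0$, then yields $\limsup_n\int_0^T(\phi(v_n),v_n-\bar v)_H\,dt\leq0$ with $\phi(v_n)=\alpha p\|v_n\|_H^{p-2}v_n$. Uniform convexity gives $v_n\to\bar v$ strongly, and you can pass to the limit in the discrete equation.

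Once repaired, your route is genuinely different from the paper's. The paper never time-steps. It minimizes the whole-trajectory defect $I$ over the temporal Galerkin spaces $Y_n$, and shows that weak limits are global minimizers of $I$ using weak lower semicontinuity, strong continuity and $P_nw\to w$. It then shows the minimum value is zero by a critical-point identity,
\begin{equation*}
0=DI(\bar v)[z]=\langle D\Delta(\bar v)-D\Delta(b(\bar v)),z\rangle+\langle SKz,z\rangle ,
\end{equation*}
with $\Delta(v)=\alpha\|v\|_{L^p}^p$, $b(v)$ the unique best response to $v$, and $z=\bar v-b(\bar v)$. Strict monotonicity and Volterra positivity then force $\bar v=b(\bar v)$. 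Uniqueness comes from strict convexity of $I$. Strong convergence comes from $0\leq I(v_n)\leq I(P_nv^*)\to0$, convergence of norms, and uniform convexity.

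Your implicit Euler scheme is instead a causal, step-by-step construction, in the spirit of the interval-wise alternative the paper mentions only in a later remark. Your existence and uniqueness argument is essentially Browder--Minty for the monotone operator $v\mapsto\phi(v)+SRv-h$. The Volterra positivity enters through the discrete energy telescoping and the monotonicity estimate rather than through $DI$. Your version makes each step a convex problem in $H$ and gives an explicit consistency error. The paper's version needs no discrete a priori or consistency estimates and realizes the abstract defect method of Section~\ref{ZLpWEK} directly on whole trajectories. Its price is the derivative formula for $I$, and Galerkin minimizers that are not themselves restricted equilibria.
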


\begin{proof}
We begin by examining the diagonal defect and its basic properties. Define
\begin{equation}
\Delta(v):=\alpha\|v\|_{L^p}^p,
\quad
\Delta^*(q):={\alpha'}\|q\|_{L^{p'}}^{p'},
\quad
{\alpha'}:=\frac{1}{p'}(\alpha p)^{-1/(p-1)},
\end{equation}
and set $g_v:=S Rv-h$ and $A:=SK$. For every $z\in {Y}$, self-adjointness and positive semidefiniteness of $S$ give
\begin{equation}
\label{eq:superlinear-volterra-positive}
\langle Az,z\rangle
=
\int_0^T(SKz(t),z(t))_H\,dt
=
\frac12(SKz(T),Kz(T))_H
\geq0.
\end{equation}
Consequently,
\begin{equation}
\label{eq:superlinear-diagonal-representation}
J(v;v)
=
\alpha\|v\|_{L^p}^p
+
\frac12(SKv(T),Kv(T))_H
+
\int_0^T(Su_0-h(t),v(t))_H\,dt.
\end{equation}
Convex duality for integral functionals \cite{Rockafellar1971IntegralII, Rockafellar1971ConvexIntegralDuality} gives
\begin{equation}
\inf_{w\in {Y}}J(w;v)
=
-\Delta^*(-g_v)
=
-{\alpha'}\|g_v\|_{L^{p'}}^{p'}.
\end{equation}
Therefore,
\begin{equation}
\label{eq:superlinear-defect-representation}
\begin{aligned}
I(v)
=&
\alpha\|v\|_{L^p}^p
+
\frac12(SKv(T),Kv(T))_H
+
\int_0^T(Su_0-h(t),v(t))_H\,dt
+ 
{\alpha'}\|S Rv-h\|_{L^{p'}}^{p'}.
\end{aligned}
\end{equation}
Because $1<p\leq2$, the Volterra operator is bounded from $L^p(0,T;H)$ to $L^{p'}(0,T;H)$. It follows from \eqref{eq:superlinear-defect-representation} that $I$ is finite and strongly continuous on ${Y}$. The same representation shows that $I$ is strictly convex and weakly sequentially lower semicontinuous. Moreover,
\begin{equation} \label{AYEfAd}
I(v)
\geq
J(v;v)
\geq
\alpha\|v\|_{L^p}^p
-
\|Su_0-h\|_{L^{p'}}\|v\|_{L^p},
\end{equation}
and $I$ is coercive.

Next, we introduce temporal Galerkin rate-path subspaces and corresponding minimizers. For $n\in\mathbb N$, let
\begin{equation}
0=t_0^n<t_1^n<\cdots<t_{2^n}^n=T,
\quad
\tau_n:=\frac{T}{2^n},
\quad
I_i^n:=(t_{i-1}^n,t_i^n),
\quad
{Y}_n
:=
\left\{
\sum_{i=1}^{2^n}\eta_i\chi_{I_i^n}:
\eta_i\in H
\right\}.
\end{equation}
The spaces ${Y}_n$ are nested closed subspaces of ${Y}$, and their union is strongly dense in ${Y}$. The approximation is Galerkin only in the time variable: the coefficients $\eta_i$ remain arbitrary elements of $H$, so ${Y}_n$ need not be finite-dimensional when $H$ is infinite-dimensional. We use the term \emph{temporal Galerkin} in this sense. Let $P_n$ denote interval averaging,
\begin{equation}
(P_nf)(t)
:=
\frac{1}{\tau_n}\int_{I_i^n}f(s)\,ds
\quad
\text{for }t\in I_i^n.
\end{equation}
Then, $P_n$ is a contraction on $L^q(0,T;H)$ for every $1\leq q<\infty$, and $P_nf\to f$ strongly in $L^q(0,T;H)$.

Since ${Y}_n$ is weakly closed, the direct method applied to $I|_{{Y}_n}$ gives a unique minimizer $v_n$. We proceed to show that the sequence $(v_n)$ has a weak limit. Because $0\in {Y}_n$, $I(v_n)\leq I(0)$, and coercivity shows that $(v_n)$ is bounded in ${Y}$. By reflexivity, along a subsequence, not relabeled, $v_n\rightharpoonup \bar v$ in ${Y}$. Fix $w\in {Y}$. Since $P_nw\in {Y}_n$, minimality, weak lower semicontinuity of $I$, and strong continuity of $I$ give
\begin{equation}
I(\bar v)
\leq
\liminf_{n\to\infty}I(v_n)
\leq
\limsup_{n\to\infty}I(v_n)
\leq
\lim_{n\to\infty}I(P_nw)
=
I(w).
\end{equation}
Thus, $\bar v$ is the unique global minimizer of $I$.

It remains to show that the minimum value is zero. For $v\in {Y}$, let $b(v):=D\Delta^*(-g_v)$. Then, $b(v)$ is the unique minimizer of $J(\cdot;v)$ and satisfies $D\Delta(b(v))+g_v=0$. The defect is Fr\'echet differentiable, with
\begin{equation}
DI(v)[z]
=
\langle D\Delta(v)+g_v,z\rangle
+
\langle Az,v-b(v)\rangle ,
\quad
\forall z\in {Y}.
\end{equation}
Set $z:=\bar v-b(\bar v)$. Since $DI(\bar v)=0$,
\begin{equation}
\begin{aligned}
0
=&
\langle D\Delta(\bar v)-D\Delta(b(\bar v)),z\rangle
+
\langle Az,z\rangle.
\end{aligned}
\end{equation}
The first term is nonnegative by strict monotonicity of $D\Delta$ and vanishes only when $z=0$; the second is nonnegative by \eqref{eq:superlinear-volterra-positive}. Hence $\bar v=b(\bar v)$, so $D\Delta(\bar v)+g_{\bar v}=0$ and $I(\bar v)=0$. Set $v^*:=\bar v$. Since $J(\cdot;v^*)$ is strictly convex, the first equality is equivalent to \eqref{eq:superlinear-diagonal-equilibrium}, and it gives the equilibrium equation
\begin{equation}
\label{eq:superlinear-operator}
\alpha p\|v^*(t)\|_H^{p-2}v^*(t)
+
S Rv^*(t)-h(t)
=
0 ,
\quad
\text{for a.e. }t\in(0,T).
\end{equation}
Every diagonal equilibrium is a zero of $I$, and the strict convexity of $I$ makes $v^*$ its unique zero, which establishes the uniqueness of the equilibrium. Setting $u^*:=Rv^*$ gives \eqref{eq:superlinear-state-equation} and uniqueness of the pair.

Finally, we investigate further the convergence properties of the temporal Galerkin approximations. The preceding compactness argument applies to every subsequence of $(v_n)$. Since every weak cluster point is the unique minimizer $v^*$, it follows that $v_n\rightharpoonup v^*$ in ${Y}$. Furthermore, $P_nv^*\in {Y}_n$, and
\begin{equation}
0
\leq
I(v_n)
\leq
I(P_nv^*)
\to
I(v^*)
=
0.
\end{equation}
This proves $I(v_n)\to I(v^*)=0$. The linear term in \eqref{eq:superlinear-defect-representation} converges under weak convergence. Hence,
\begin{equation}
\begin{aligned}
&\alpha\|v_n\|_{L^p}^p
+
\frac12(SKv_n(T),Kv_n(T))_H
+
{\alpha'}\|S Rv_n-h\|_{L^{p'}}^{p'}
\\
&\quad\to
\alpha\|v^*\|_{L^p}^p
+
\frac12(SKv^*(T),Kv^*(T))_H
+
{\alpha'}\|S Rv^*-h\|_{L^{p'}}^{p'}.
\end{aligned}
\end{equation}
Each term is weakly lower semicontinuous. A standard liminf argument therefore yields $\|v_n\|_{L^p} \to \|v^*\|_{L^p}$. Since ${Y}=L^p(0,T;H)$ is uniformly convex, $v_n\to v^*$ strongly in ${Y}$.
\end{proof}

\begin{remark}[Relation with the abstract criteria]
{\rm The proof verifies the hypotheses of Propositions~\ref{aBgxdV} and \ref{prop:direct-defect}. Indeed, the temporal Galerkin sequence satisfies $I(v_n)\to0$; coercivity and reflexivity give weak sequential compactness of the relevant defect and almost-equilibrium sets; and (\ref{eq:superlinear-diagonal-representation}--\ref{eq:superlinear-defect-representation}) provide the required weak lower semicontinuity of the diagonal terms and weak continuity of $v\mapsto J(w;v)$ for fixed $w$. Thus, Proposition~\ref{prop:superlinear-attainment} provides a concrete realization of both abstract direct methods.}\hfill$\square$
\end{remark}

\begin{remark}[Alternative constructions of approximate equilibria]
{\rm The temporal Galerkin minimizers $v_n$ above minimize the diagonal defect over $Y_n$, but are not, in general, exact equilibria of the restricted game: differentiating $I|_{Y_n}$ produces an additional adjoint term from the state dependence. Exact restricted equilibria can instead be obtained by an interval-wise causal construction. Other possibilities include Picard iteration or vanishing temporal or monotone regularization, followed by compactness and strong convergence to obtain vanishing unregularized defect. The pathwise defect minimization used here is particularly convenient because it combines directly with the strong density of $Y_n$ and the weak lower semicontinuity of $I$.}\hfill$\square$
\end{remark}

\subsection{Linear-growth energy--dissipation systems in measure form}
\label{subsec:linear-growth-energy-dissipation}

We continue with an example that illustrates existence of equilibria in a metric space setting. The metric structure follows from the metrizability of weak-star convergence of measures in bounded sets, the bound being supplied by energy balance. 

At linear growth, the rate path is naturally a measure and the state path is naturally a function of bounded variation. We therefore take these as the ambient spaces from the outset. Absolutely continuous rate paths enter only as a regularizing and temporal approximation class; they are not built into the solution concept. A central point is that the work term must use the centered representative of the reconstructed state, see, e.g., \cite{AmbrosioDalMaso1990, AFP00}. This convention accounts exactly for the quadratic energy carried by atoms of the rate measure and gives a well-defined diagonal payoff for arbitrary measure-valued evolutions.

Let $H$ be a separable real Hilbert space, fix $u_0\in H$, let $\alpha>0$, and set
\begin{equation}
\label{eq:linear-measure-spaces}
\mathcal X
:=
\left\{
u\in BV([0,T];H):
u\text{ right-continuous and } u(0^-)=u_0
\right\} ,
\quad
\mathcal Y
:=
\mathcal M([0,T];H).
\end{equation}
Here $\mathcal M([0,T];H)$ denotes the space of finite $H$-valued Radon measures of bounded variation. Since $H^*=H$ is reflexive and therefore has the Radon--Nikodym property, the vector-valued Riesz representation identifies $\mathcal Y$ isometrically with $C([0,T];H)^*$ \cite{DiestelUhl1977} through the pairing:
\begin{equation}
\langle \nu,\varphi\rangle
:=
\int_{[0,T]}(\varphi(t),d\nu(t))_H.
\end{equation}
Because $H$ is separable, $C([0,T];H)$ is separable. Banach--Alaoglu and metrizability of weak-star compact subsets of the dual of a separable Banach space therefore imply that every total-variation-bounded subset of $\mathcal Y$ is weak-star sequentially compact; see, e.g., \cite[Theorem~3.28]{Brezis2011}.

Throughout this subsection, $Du$ denotes the $H$-valued Lebesgue--Stieltjes measure associated with a right-continuous $u\in BV([0,T];H)$ and its prescribed left trace $u(0^-)=u_0$, characterized by
\begin{equation} \label{ESUERM}
Du([0,t])=u(t)-u(0^-),
\quad
Du((s,t])=u(t)-u(s),
\quad
0\leq s<t\leq T.
\end{equation}
This convention retains possible atoms at both endpoints. Correspondingly, $\operatorname{Var}_H(u;[0,T])$ is understood relative to the prescribed left trace, so that it includes the initial jump $\|u(0)-u(0^-)\|_H$. 

For $\nu\in\mathcal Y$, define the reconstruction operator as
\begin{equation}
\label{eq:linear-measure-reconstruction}
{R\nu}(t)
:=
u_0+\nu([0,t]),
\quad
{R\nu}(0^-):=u_0.
\end{equation}
Then, $R\nu\in\mathcal X$,
\begin{equation}
\label{eq:linear-measure-variation}
D{R\nu}=\nu,
\quad
\operatorname{Var}_H(R\nu;[0,T])
=
|\nu|([0,T]).
\end{equation}
We recall that, since $\nu \in \mathcal M([0,T];H)$ has finite total variation, it can have at most countably many atoms, see, e.g., \cite[Chapter~I]{DiestelUhl1977}. At an atom, $\nu(\{t\})$ is the jump ${R\nu}(t)-{R\nu}(t^-)$. We use the centered representative
\begin{equation}
\label{eq:linear-centered-reconstruction}
\widehat{R\nu}(t)
:=
\frac12\bigl({R\nu}(t^-)+{R\nu}(t)\bigr)
=
 u_0+\nu([0,t))+\frac12\nu(\{t\}) ,
\end{equation}
which defines a bounded Borel map on $[0,T]$.

Let $S\in\mathcal L(H)$ be self-adjoint and uniformly positive: there exists $\lambda>0$ such that
\begin{equation}
\label{eq:linear-measure-coercivity}
(S\xi,\xi)_H
\geq
\lambda\|\xi\|_H^2 ,
\quad
\forall\xi\in H.
\end{equation}
Let $h\in W^{1,1}(0,T;H)$ and use its continuous representative on $[0,T]$. The stored energy and the measure dissipation are then
\begin{equation}
\label{eq:linear-measure-energy-dissipation}
\mathcal E(\xi,t)
:=
\frac12(S\xi,\xi)_H-(h(t),\xi)_H,
\quad
\mathcal D(\mu;[0,t])
:=
\alpha|\mu|([0,t]).
\end{equation}
We impose the initial stability condition
\begin{equation}
\label{eq:linear-measure-initial-stability}
\|Su_0-h(0)\|_H
\leq
\alpha.
\end{equation}

For $\mu,\nu\in\mathcal Y$, set $g_\mu(t):=S\widehat{R\mu}(t)-h(t)$ and define the complete measure payoff
\begin{equation}
\label{eq:linear-measure-payoff}
\mathcal J(\nu;\mu)
:=
\alpha|\nu|([0,T])
+
\int_{[0,T]}(g_\mu(t),d\nu(t))_H ,
\end{equation}
see \cite{Reshetnyak1968}. Specifically, the last term means
\begin{equation}
\int_{[0,T]}(g_\mu,d\nu)_H
:=
\int_{[0,T]}(g_\mu(t),p_\nu(t))_H\,d|\nu|(t) ,
\end{equation}
where $\nu=p_\nu|\nu|$ is the polar decomposition of $\nu$. Thus, \eqref{eq:linear-measure-payoff} is finite for every pair of finite measures. For $A\in \mathcal{B}([0,T])$, the Borel $\sigma$-algebra on $[0,T]$, $\mathcal J_A$ denotes the same expression with both measure terms restricted to $A$.

If $u=R\mu$, then for every $t\in[0,T]$,
\begin{equation}
\label{eq:linear-quadratic-measure-chain-rule}
\int_{[0,t]}(S\widehat u(s),d\mu(s))_H
=
\frac12(Su(t),u(t))_H
-
\frac12(Su_0,u_0)_H ,
\end{equation}
which shows that the centered representative \eqref{eq:linear-centered-reconstruction} is not merely a convenient convention: it is uniquely selected among all representatives of the form $(1-\theta)u(t^-)+\theta u(t)$ by requiring the exact quadratic chain rule at every atom. This identity is the quadratic specialization of the general chain-rule calculus for $BV$ maps and distributional derivatives \cite{AmbrosioDalMaso1990,AFP00}. Integration by parts against the absolutely continuous loading further gives
\begin{equation}
\label{eq:linear-loading-measure-chain-rule}
\int_{[0,t]}(h(s),d\mu(s))_H
=
(h(t),u(t))_H-(h(0),u_0)_H
-
\int_0^t(\dot h(s),u(s))_H\,ds.
\end{equation}
Consequently, the localized diagonal payoff is exactly the energy residual
\begin{equation}
\label{eq:linear-diagonal-energy-representation}
\mathcal J_{[0,t]}(\mu;\mu)
=
\mathcal E(u(t),t)-\mathcal E(u_0,0)
+
\alpha|\mu|([0,t])
+
\int_0^t(\dot h(s),u(s))_H\,ds.
\end{equation}
In particular, atoms require no separate jump correction: their full quadratic contribution is already contained in \eqref{eq:linear-quadratic-measure-chain-rule}.

For subsequent use, we define the safe-stress violation and the bounded-competitor residual as
\begin{equation}
\label{eq:linear-overstress}
\mathcal Q(\mu)
:=
\sup_{t\in[0,T]}
\bigl(\|g_\mu(t)\|_H-\alpha\bigr)_+,
\end{equation}
and
\begin{equation}
\label{eq:linear-bounded-defect}
\mathcal I_M(\mu)
:=
\sup_{\substack{\nu\in\mathcal Y\\ |\nu|([0,T])\leq M}}
\bigl\{
\mathcal J(\mu;\mu)-\mathcal J(\nu;\mu)
\bigr\},
\quad
M>0 ,
\end{equation}
respectively. Point-mass competitors and the polar decomposition give
\begin{equation}
\label{eq:linear-bounded-defect-representation}
\inf_{\substack{\nu\in\mathcal Y\\ |\nu|([0,T])\leq M}}
\mathcal J(\nu;\mu)
=
-M\mathcal Q(\mu),
\quad
\mathcal I_M(\mu)
=
\mathcal J(\mu;\mu)+M\mathcal Q(\mu).
\end{equation}
Thus, $\mathcal Q$ measures failure of stability, while the diagonal value measures failure of complementarity and energy balance.

The first lemma translates the global measure best-response inequality into the familiar local stability and polar-complementarity conditions. Point-mass competitors detect violations of the yield bound, while equality in the polar estimate identifies the direction of the rate measure.

\begin{lemma}[Measure equilibrium and complementarity]
\label{lem:linear-measure-complementarity}
Let $\mu\in\mathcal Y$, let $u=R\mu$, and define the driving force as ${f}_\mu:=h-S\widehat u=-g_\mu$. Then,
\begin{equation}
\label{eq:linear-measure-diagonal-equilibrium}
\mathcal J(\mu;\mu)
\leq
\mathcal J(\nu;\mu) ,
\quad
\forall\nu\in\mathcal Y ,
\end{equation}
is equivalent to
\begin{equation}
\label{eq:linear-measure-complementarity}
\|{f}_\mu(t)\|_H
\leq
\alpha ,
\quad
\forall t\in[0,T] ;
\quad
{f}_\mu
=
\alpha\frac{d\mu}{d|\mu|} ,
\quad
|\mu|\text{-a.e.}
\end{equation}
Under either condition, 
\begin{equation}
\label{eq:linear-local-zero-payoff}
\mathcal J_A(\mu;\mu)=0, \quad \forall A\in \mathcal{B}([0,T]), 
\end{equation}
and therefore
\begin{equation}
\label{eq:linear-energy-identity}
\mathcal E(u(t),t)
+
\alpha|\mu|([0,t])
=
\mathcal E(u_0,0)
-
\int_0^t(\dot h(s),u(s))_H\,ds ,
\end{equation}
for every $t\in[0,T]$.
\end{lemma}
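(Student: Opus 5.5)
The proof reduces everything to the elementary inequality $(g,p)_H\geq-\|g\|_H$ for unit vectors $p$, applied pointwise against $|\nu|$. Write $g:=g_\mu=-f_\mu$, a bounded Borel map by \eqref{eq:linear-centered-reconstruction} and the continuity of $h$. For any $\nu\in\mathcal Y$ with polar decomposition $\nu=p_\nu|\nu|$, set
\begin{equation*}
\mathcal J_A(\nu;\mu)=\int_A\bigl(\alpha+(g(t),p_\nu(t))_H\bigr)\,d|\nu|(t).
\end{equation*}

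\emph{Sufficiency.} Assume \eqref{eq:linear-measure-complementarity}. Then $\|g\|_H\leq\alpha$ everywhere, so the integrand $\alpha+(g,p_\nu)_H\geq\alpha-\|g\|_H\geq0$. Hence $\mathcal J_A(\nu;\mu)\geq0$ for every $\nu$ and every Borel set $A$. Choosing $\nu=\mu$, the second condition gives $p_\mu=f_\mu/\alpha$ $|\mu|$-a.e. The integrand then becomes $\alpha-\|f_\mu\|_H^2/\alpha$. Since $p_\mu$ is a unit vector, $\|f_\mu\|_H=\alpha$ $|\mu|$-a.e., so the integrand vanishes $|\mu|$-a.e. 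This yields \eqref{eq:linear-local-zero-payoff}. The equilibrium inequality \eqref{eq:linear-measure-diagonal-equilibrium} follows because $\mathcal J(\mu;\mu)=0\leq\mathcal J(\nu;\mu)$.

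\emph{Necessity, stability part.} Assume \eqref{eq:linear-measure-diagonal-equilibrium}. Since $\mathcal J(0;\mu)=0$, we get $\mathcal J(\mu;\mu)\leq0$. Suppose $\|g(t_0)\|_H>\alpha$ at some $t_0$. Test with the point mass $\nu=-s\,\delta_{t_0}\,g(t_0)/\|g(t_0)\|_H$ for $s>0$. This gives $\mathcal J(\nu;\mu)=s(\alpha-\|g(t_0)\|_H)\to-\infty$ as $s\to\infty$, contradicting the finite value $\mathcal J(\mu;\mu)$. This is exactly the content of \eqref{eq:linear-bounded-defect-representation}. Hence $\|f_\mu(t)\|_H\leq\alpha$ for every $t$. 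No continuity of $g$ is needed, since atoms can be placed at arbitrary points.

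\emph{Necessity, complementarity part.} By the stability bound just proved, the integrand $\alpha+(g,p_\mu)_H$ is nonnegative. Combined with $\mathcal J(\mu;\mu)\leq0$, this forces $\alpha+(g,p_\mu)_H=0$ $|\mu|$-a.e. Because $\|g\|_H\leq\alpha$ and $\|p_\mu\|_H=1$, the equality case of Cauchy--Schwarz gives $g=-\alpha p_\mu$, that is, $f_\mu=\alpha\,d\mu/d|\mu|$ $|\mu|$-a.e.

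\emph{Energy identity.} Apply \eqref{eq:linear-local-zero-payoff} with $A=[0,t]$ and insert it into the diagonal energy representation \eqref{eq:linear-diagonal-energy-representation}. This is valid for every $t$ because that representation holds for all $t\in[0,T]$, via the centered chain rule \eqref{eq:linear-quadratic-measure-chain-rule} and \eqref{eq:linear-loading-measure-chain-rule}. The result is \eqref{eq:linear-energy-identity}.

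The only delicate point is measurability and integrability of $g$ against $|\nu|$. The map $g$ is bounded and Borel because $\nu([0,t))$ and $\nu(\{t\})$ depend on $t$ in a Borel way and $h$ is continuous, so every integral above is well defined. The key structural fact is the point-mass test in the stability part: the unrestricted competitor class makes the best-response value $-\infty$ unless stability holds.
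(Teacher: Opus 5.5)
Your proof is correct and follows essentially the paper's argument: point-mass competitors force the safe-stress bound at every $t$, the choice $\nu=0$ gives $\mathcal J(\mu;\mu)\leq 0$, and the equality case of the polar estimate yields complementarity. You carry out that equality case pointwise via Cauchy--Schwarz, which also makes the localization to every Borel $A$ explicit, and the energy identity then follows from \eqref{eq:linear-diagonal-energy-representation}, as in the paper. The only slip is notational: in your measurability remark the Borel dependence should be that of $\mu([0,t))$ and $\mu(\{t\})$, since $g=g_\mu$ is generated by $\mu$ and not by the competitor $\nu$.
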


\begin{proof}
Suppose first that \eqref{eq:linear-measure-diagonal-equilibrium} holds. If $\|g_\mu(t_0)\|_H>\alpha$ at some $t_0$, competitors of the form $\nu_r:=-r\frac{g_\mu(t_0)}{\|g_\mu(t_0)\|_H}\delta_{t_0}$, $r>0$, make $\mathcal J(\nu_r;\mu)\to-\infty$, a contradiction. Hence $\|{f}_\mu\|_H\leq\alpha$ pointwise, and therefore $\mathcal J(\nu;\mu)\geq0$ for every $\nu$. Choosing $\nu=0$ in \eqref{eq:linear-measure-diagonal-equilibrium} gives $\mathcal J(\mu;\mu)\leq0$, while stability gives the reverse inequality. Thus, $\mathcal J(\mu;\mu)=0$. The equality case in the polar estimate
\begin{equation} \label{97nuw9}
\int_{[0,T]}({f}_\mu,d\mu)_H
\leq
\alpha|\mu|([0,T])
\end{equation}
gives the second relation in \eqref{eq:linear-measure-complementarity}. The same argument on every Borel restriction gives \eqref{eq:linear-local-zero-payoff}.

Conversely, \eqref{eq:linear-measure-complementarity} gives $\mathcal J(\mu;\mu)=0$ and $\mathcal J(\nu;\mu)\geq0$ for every $\nu$, hence \eqref{eq:linear-measure-diagonal-equilibrium}. Finally, \eqref{eq:linear-energy-identity} follows from \eqref{eq:linear-local-zero-payoff} and \eqref{eq:linear-diagonal-energy-representation}.
\end{proof}

The equilibrium characterization alone does not yet give compactness. The next estimate uses the energy identity to bound both the state and the total variation of the rate by the data, thereby placing all equilibria in one weak-star compact ball, cf.~\cite{MielkeRossiSavare2012}.

\begin{lemma}[Energy-derived measure bound]
\label{lem:linear-energy-measure-bound}
Let $\mu\in\mathcal Y$, set $u=R\mu$, and suppose that \eqref{eq:linear-energy-identity} holds. Define
\begin{equation}
\label{eq:linear-measure-data-constants}
C_\infty
:=
\|h\|_{L^\infty(0,T;H)},
\quad
C_1
:=
\|\dot h\|_{L^1(0,T;H)},
\quad
C_0
:=
\mathcal E(u_0,0)+\frac{C_\infty^2}{\lambda}.
\end{equation}
Then, $C_0\geq0$ and
\begin{equation}
\label{eq:linear-measure-state-bound}
\sup_{t\in[0,T]}\|u(t)\|_H
\leq
{U}
:=
\frac{2}{\lambda}
\left(
C_1+\sqrt{C_1^2+\lambda C_0}
\right).
\end{equation}
Moreover,
\begin{equation}
\label{eq:linear-measure-TV-bound}
|\mu|([0,T])
=
\operatorname{Var}_H(u;[0,T])
\leq
{L}
:=
\frac{C_0+C_1{U}}{\alpha}.
\end{equation}
Thus, every equilibrium rate lies in the weak-star compact ball
\begin{equation}
\label{eq:linear-measure-compact-ball}
\mathcal Y_{{L}}
:=
\left\{
\mu\in\mathcal Y:
|\mu|([0,T])\leq {L}
\right\}.
\end{equation}
\end{lemma}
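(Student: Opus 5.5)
The plan is to extract everything from the energy identity \eqref{eq:linear-energy-identity} together with two elementary lower bounds on the stored energy. Each bound comes from the uniform positivity \eqref{eq:linear-measure-coercivity} and a Young inequality for the loading term.

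\emph{Nonnegativity of $C_0$.} For every $\xi\in H$ and $t\in[0,T]$,
\[
\mathcal E(\xi,t)\geq\tfrac{\lambda}{2}\|\xi\|_H^2-C_\infty\|\xi\|_H\geq-\tfrac{C_\infty^2}{2\lambda}.
\]
This holds because the quadratic $\tfrac{\lambda}{2}r^2-C_\infty r$ has minimum value $-C_\infty^2/(2\lambda)$. Applying it with $\xi=u_0$, $t=0$ gives $C_0\geq C_\infty^2/(2\lambda)\geq0$.

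\emph{State bound.} I will use a sharper splitting that keeps half of the coercivity. By Young's inequality, $C_\infty\|\xi\|_H\leq\tfrac{\lambda}{4}\|\xi\|_H^2+\tfrac{C_\infty^2}{\lambda}$, and hence
\[
\mathcal E(\xi,t)\geq\tfrac{\lambda}{4}\|\xi\|_H^2-\tfrac{C_\infty^2}{\lambda}.
\]
Set $\hat U:=\sup_{t\in[0,T]}\|u(t)\|_H$, which is finite because $u=R\mu$ is a $BV$ path. Drop the nonnegative dissipation term in \eqref{eq:linear-energy-identity} and bound the loading integral by $\bigl|\int_0^t(\dot h,u)_H\,ds\bigr|\leq C_1\hat U$. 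This gives, for every $t$,
\[
\tfrac{\lambda}{4}\|u(t)\|_H^2\leq C_0+C_1\hat U.
\]
Taking the supremum in $t$ yields the quadratic inequality $\tfrac{\lambda}{4}\hat U^2-C_1\hat U-C_0\leq0$. Its positive root is exactly $\tfrac{2}{\lambda}\bigl(C_1+\sqrt{C_1^2+\lambda C_0}\bigr)={U}$, so $\hat U\leq{U}$. The only delicate point here is the a priori finiteness of $\hat U$, which is needed to absorb the term $C_1\hat U$; the $BV$ structure of $\mathcal X$ supplies it.

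\emph{Total-variation bound and compactness.} Evaluate \eqref{eq:linear-energy-identity} at $t=T$ and use the first lower bound, $\mathcal E(u(T),T)\geq-C_\infty^2/(2\lambda)\geq-C_\infty^2/\lambda$. Together with the estimate $\bigl|\int_0^T(\dot h,u)_H\,ds\bigr|\leq C_1{U}$ from the previous step, this gives
\[
\alpha|\mu|([0,T])\leq\mathcal E(u_0,0)+\tfrac{C_\infty^2}{\lambda}+C_1{U}=C_0+C_1{U},
\]
which is \eqref{eq:linear-measure-TV-bound}. The identity $|\mu|([0,T])=\operatorname{Var}_H(u;[0,T])$ is \eqref{eq:linear-measure-variation}. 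Finally, by Lemma~\ref{lem:linear-measure-complementarity} every equilibrium satisfies \eqref{eq:linear-energy-identity}, so every equilibrium rate lies in $\mathcal Y_{L}$. This ball is weak-star (sequentially) compact by the Banach--Alaoglu and metrizability remark made after \eqref{eq:linear-measure-spaces}.
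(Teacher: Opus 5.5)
Your proof is correct and follows the paper's argument. Both use the Young-inequality lower bound $\mathcal E(\xi,t)\geq\frac{\lambda}{4}\|\xi\|_H^2-\frac{C_\infty^2}{\lambda}$, drop the dissipation term in the energy identity, solve the resulting quadratic inequality for $\sup_t\|u(t)\|_H$, and return to the identity at $t=T$ for the total-variation bound. Your sharper bound $-C_\infty^2/(2\lambda)$, which gives $C_0\geq0$, and your note that $\sup_t\|u(t)\|_H$ is finite a priori because $u$ is $BV$ make explicit two points the paper leaves implicit.
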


\begin{proof}
Coercivity and Young's inequality give
\begin{equation}
\label{eq:linear-energy-lower-bound}
\mathcal E(\xi,t)
\geq
\frac{\lambda}{4}\|\xi\|_H^2
-
\frac{C_\infty^2}{\lambda}.
\end{equation}
Let $U_\mu:=\sup_{t\in[0,T]}\|u(t)\|_H$. Combining \eqref{eq:linear-energy-identity} and \eqref{eq:linear-energy-lower-bound}, and discarding the nonnegative dissipation term, yields
\begin{equation}
\frac{\lambda}{4}\|u(t)\|_H^2
\leq
C_0+C_1U_\mu.
\end{equation}
Taking the supremum over $t$ and solving the resulting quadratic inequality gives $U_\mu\leq U$, which is \eqref{eq:linear-measure-state-bound}. Returning to the energy identity and using \eqref{eq:linear-energy-lower-bound} gives
\begin{equation}
\alpha|\mu|([0,T])
\leq
C_0+C_1U,
\end{equation}
which is \eqref{eq:linear-measure-TV-bound}.
\end{proof}

We introduce viscous regularization as a means of generating approximate equilibria. This is consistent with the broader use of vanishing-viscosity approximations in rate-independent evolution \cite{MielkeRossiSavare2012, MielkeRossiSavare2016}, although here the regularization is used specifically to build almost equilibria for the diagonal game. For $\varepsilon>0$, we define the extended measure dissipation as
\begin{equation}
\label{eq:linear-viscous-potential}
\mathcal D_\varepsilon(\nu)
:=
\begin{cases}
\displaystyle
\alpha \| w \|_{L^1} 
+
\frac{\varepsilon}{2} \| w \|_{L^2}^2 , 
&
\nu=w(t)dt,\quad w\in L^2(0,T;H),
\\
+\infty,
&
\text{otherwise} ,
\end{cases}
\end{equation}
and the extended diagonal response and diagonal defect as
\begin{equation}
\label{eq:linear-viscous-payoff}
\mathcal J_\varepsilon(\nu;\mu)
:=
\mathcal D_\varepsilon(\nu)
+
\int_{[0,T]}(g_\mu,d\nu)_H ,
\quad
\mathcal I_\varepsilon(\mu)
:=
\mathcal J_\varepsilon(\mu;\mu)
-
\inf_{\nu\in\mathcal Y}\mathcal J_\varepsilon(\nu;\mu).
\end{equation}
The effective domain of $\mathcal I_\varepsilon$ consists of measures with $L^2$ densities. Thus, $L^2(0,T;H)$ represents the viscous core of the measure space. The following lemma shows that the regularized defect is explicit and, crucially, still has minimum value zero.

\begin{lemma}[Viscous regularization]
Let $\mu=v(t)dt$ with $v\in L^2(0,T;H)$. Then,
\begin{equation}
\label{eq:linear-viscous-defect-representation}
\begin{split}
\mathcal I_\varepsilon(vdt)
& =
\alpha\|v\|_{L^1}
+
\frac{\varepsilon}{2}\|v\|_{L^2}^2
+
\frac12(SKv(T),Kv(T))_H
\\ & +
\int_0^T(Su_0-h(t),v(t))_H\,dt
+
\frac{1}{2\varepsilon}
\int_0^T
\bigl(\|SR(vdt)(t)-h(t)\|_H-\alpha\bigr)_+^2\,dt .
\end{split}
\end{equation}
The extended diagonal defect $\mathcal I_\varepsilon$ has a unique minimizer $\mu_\varepsilon=v_\varepsilon dt$ and
\begin{equation}
\label{eq:linear-viscous-zero-defect}
\mathcal I_\varepsilon(\mu_\varepsilon)=0.
\end{equation}
The density satisfies the inclusion
\begin{equation}
\label{eq:linear-viscous-flow-rule}
0
\in
\alpha\partial\|\cdot\|_H(v_\varepsilon(t))
+
\varepsilon v_\varepsilon(t)
+
S R\mu_\varepsilon(t)-h(t) ,
\quad
\text{for a.e. }t \in (0,T) .
\end{equation}
\end{lemma}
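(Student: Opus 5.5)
The plan is to follow the same route as in the proof of Proposition~\ref{prop:superlinear-attainment}: first compute $\mathcal I_\varepsilon$ explicitly by convex duality, then minimize it over $L^2(0,T;H)$, and finally show that the minimum value is zero via a first-variation argument that uses the positivity of the Volterra term.

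\textbf{Explicit representation.} Fix $\mu=v\,dt$. Since $\mu$ has no atoms, $\widehat{R\mu}=R\mu$, and
\begin{equation*}
g_\mu=SR(v\,dt)-h=SKv+Su_0-h .
\end{equation*}
The diagonal value then equals
\begin{equation*}
\alpha\|v\|_{L^1}+\frac{\varepsilon}{2}\|v\|_{L^2}^2+\int_0^T(SKv,v)_H\,dt+\int_0^T(Su_0-h,v)_H\,dt ,
\end{equation*}
and the Volterra identity \eqref{eq:superlinear-volterra-positive} turns the third term into $\frac12(SKv(T),Kv(T))_H$.

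For the infimum, only competitors $\nu=w\,dt$ with $w\in L^2$ give finite values. The integrand $\alpha\|w\|_H+\frac{\varepsilon}{2}\|w\|_H^2+(g,w)_H$ is minimized pointwise at the radial point
\begin{equation*}
w=-\varepsilon^{-1}\bigl(\|g\|_H-\alpha\bigr)_+\,\frac{g}{\|g\|_H},
\end{equation*}
with minimum value $-\frac{1}{2\varepsilon}(\|g\|_H-\alpha)_+^2$. This is the Moreau envelope of the support function of the $\alpha$-ball. Rockafellar's interchange theorem \cite{Rockafellar1971IntegralII} justifies exchanging infimum and integral, and the optimizer is measurable and lies in $L^2$ because $g_\mu\in L^2$. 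This yields \eqref{eq:linear-viscous-defect-representation}.

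\textbf{Existence and uniqueness of a minimizer.} On $L^2(0,T;H)$ the functional is finite and continuous. It is strictly convex: the $\frac{\varepsilon}{2}\|v\|^2$ term is strictly convex, the Volterra term is a nonnegative quadratic form, and the last term is the composition of the convex nondecreasing map $s\mapsto(s-\alpha)_+^2$ with the convex map $v\mapsto\|SKv+Su_0-h\|_H$, integrated. It is coercive, since it is bounded below by $\frac{\varepsilon}{2}\|v\|_{L^2}^2-C\|v\|_{L^2}$. Hence it has a unique minimizer $v_\varepsilon$. Measures without $L^2$ density have $\mathcal I_\varepsilon=+\infty$, because $\mathcal J_\varepsilon(\mu;\mu)=+\infty$ while the infimum over competitors is finite. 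So $\mu_\varepsilon=v_\varepsilon\,dt$ is the unique minimizer on all of $\mathcal Y$.

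\textbf{Main obstacle: showing the minimum is zero.} This is where the work lies, and I would mimic the superlinear argument. Let $b(v)$ denote the best response, the pointwise radial map above, which is Lipschitz in $g_v$. The envelope term is $C^1$, and its derivative in $v$ is
\begin{equation*}
z\mapsto -\langle b(v),SKz\rangle=-\langle Az,b(v)\rangle, \qquad A:=SK .
\end{equation*}
The $L^1$ term is convex but not differentiable, so $0\in\partial\mathcal I_\varepsilon(v_\varepsilon)$ reads
\begin{equation*}
0\in\alpha\,\partial\|\cdot\|_{L^1}(v_\varepsilon)+\varepsilon v_\varepsilon+g_{v_\varepsilon}+A^*(v_\varepsilon-b(v_\varepsilon)).
\end{equation*}
The best response itself satisfies
\begin{equation*}
0\in\alpha\,\partial\|\cdot\|_{L^1}(b)+\varepsilon b+g_{v_\varepsilon}.
\end{equation*}
Subtracting the two inclusions and testing with $z:=v_\varepsilon-b(v_\varepsilon)$, monotonicity of $\partial\|\cdot\|_{L^1}$ gives
\begin{equation*}
0\geq\varepsilon\|z\|_{L^2}^2+\langle A^*z,z\rangle=\varepsilon\|z\|_{L^2}^2+\langle Az,z\rangle\geq\varepsilon\|z\|_{L^2}^2 .
\end{equation*}
Hence $z=0$, that is $v_\varepsilon=b(v_\varepsilon)$, and therefore $\mathcal I_\varepsilon(\mu_\varepsilon)=0$.

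The step I expect to need care is computing the subdifferential of a sum, convex nonsmooth part plus $C^1$ part. This is legitimate because the $C^1$ part is smooth and every term is finite and continuous on $L^2$; if needed I would instead argue directly with one-sided directional derivatives along $z$.

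The flow rule \eqref{eq:linear-viscous-flow-rule} is then the pointwise optimality condition for $b(v_\varepsilon)=v_\varepsilon$, using $R\mu_\varepsilon=\widehat{R\mu_\varepsilon}$.
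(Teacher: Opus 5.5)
Your proposal is correct and follows the paper's proof essentially step for step. The representation comes from pointwise convex duality (the Moreau envelope of $\alpha\|\cdot\|_H$). The unique minimizer comes from $\varepsilon$-strong convexity and coercivity on the $L^2$ core. The zero value comes from the Fermat condition $0\in\partial\mathcal D_\varepsilon(v_\varepsilon)+g_{\mu_\varepsilon}+A^*(v_\varepsilon-b(v_\varepsilon))$, tested with $z=v_\varepsilon-b(v_\varepsilon)$. Your splitting $\partial\mathcal D_\varepsilon=\alpha\partial\|\cdot\|_{L^1}+\varepsilon\,\mathrm{id}$ is just the paper's $\varepsilon$-strong monotonicity written out. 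The sum-rule step you flag is harmless: at a minimizer of a convex continuous term plus a $C^1$ term, the usual one-sided difference-quotient argument gives exactly the inclusion you use.
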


\begin{proof}
If $\mu=v(t)dt$ with $v\in L^2(0,T;H)$, then $R\mu$ is continuous and $g_\mu=S R\mu-h$. Pointwise convex duality, in the standard integral-functional form \cite{Rockafellar1971IntegralII,Rockafellar1971ConvexIntegralDuality}, gives
\begin{equation}
\label{eq:linear-viscous-conjugate}
\inf_{\nu\in\mathcal Y}\mathcal J_\varepsilon(\nu;\mu)
=
-
\frac{1}{2\varepsilon}
\int_0^T
\bigl(\|g_\mu(t)\|_H-\alpha\bigr)_+^2\,dt ,
\end{equation}
whence \eqref{eq:linear-viscous-defect-representation} follows. As a functional of the density, $\mathcal I_\varepsilon$ is finite, coercive, weakly lower semicontinuous, and $\varepsilon$-strongly convex on $L^2(0,T;H)$. Hence it has a unique minimizer $\mu_\varepsilon=v_\varepsilon dt$.

We show that the minimum value is zero. View $\mathcal D_\varepsilon$ on its $L^2$ density core and let $\mathcal D_\varepsilon^*$ denote its $L^2$ convex conjugate. Let $K$ be the Volterra operator on densities, set $A:=SK$, and define $b_\varepsilon(v):=D\mathcal D_\varepsilon^*(-g_{vdt})$. Then, $-g_{vdt}\in\partial\mathcal D_\varepsilon(b_\varepsilon(v))$. The Fermat condition at $v_\varepsilon$ gives $\xi_\varepsilon\in\partial\mathcal D_\varepsilon(v_\varepsilon)$ such that
\begin{equation}
0
=
\xi_\varepsilon
+
g_{\mu_\varepsilon}
+
A^*(v_\varepsilon-b_\varepsilon(v_\varepsilon)).
\end{equation}
Pairing with $z:=v_\varepsilon-b_\varepsilon(v_\varepsilon)$ and using the $\varepsilon$-strong monotonicity of $\partial\mathcal D_\varepsilon$ together with $\langle Az,z\rangle=\frac12(SKz(T),Kz(T))_H\geq0$ gives
\begin{equation}
0
\geq
\varepsilon\|z\|_{L^2}^2
+
\frac12(SKz(T),Kz(T))_H.
\end{equation}
Thus, $z=0$, \eqref{eq:linear-viscous-zero-defect} holds, and \eqref{eq:linear-viscous-flow-rule} follows.
\end{proof}

To pass from the viscous core back to the measure space, we use a temporal Galerkin reduction on entire rate trajectories rather than an incremental time-stepping construction. The approximation below is elementary, but it is designed to preserve total-variation control while being strongly dense on the $L^2$ core. For $m\in\mathbb N$, let
\begin{equation}
\label{eq:linear-galerkin-grid}
N_m
:=
2^m,
\quad
\tau_m
:=
\frac{T}{N_m},
\quad
0=t_0^m<t_1^m<\cdots<t_{N_m}^m=T,
\quad
I_i^m
:=
(t_{i-1}^m,t_i^m) .
\end{equation}
We then define the nested temporal measure-approximation subspaces
\begin{equation}
\label{eq:linear-galerkin-spaces}
\mathcal Y_m
:=
\left\{
\left(\sum_{i=1}^{N_m}\eta_i\chi_{I_i^m}\right)dt:
\eta_i\in H
\right\}
\subset
\mathcal Y ,
\end{equation}
As in the superlinear construction, only time is discretized: the coefficients $\eta_i$ remain in $H$, and no spatial finite-dimensional approximation is implied. We denote by
\begin{equation}
\label{eq:linear-interval-projection}
(P_mf)(t)
:=
\frac{1}{\tau_m}
\int_{I_i^m}f(s)\,ds
\quad
\text{for }t\in I_i^m 
\end{equation}
the interval-averaging projection.

These approximation spaces have the following properties. The weak-star part is a concrete measure approximation, while the strong $L^r$ part is the usual convergence of interval averages; cf.~standard vector-measure and functional-analysis treatments \cite{DiestelUhl1977,Brezis2011}.

\begin{lemma}[Approximation measure spaces]
\label{lem:linear-approximation-measure-spaces}
The union $\bigcup_{m\in\mathbb N}\mathcal Y_m$ is weak-star sequentially dense in $\mathcal Y$. More precisely, for every $\mu\in\mathcal Y$ there exists a sequence $\mu_m\in\mathcal Y_m$ such that $\mu_m\stackrel{*}{\rightharpoonup}\mu$ in $\mathcal Y$ and $|\mu_m|([0,T])\leq |\mu|([0,T])$. Moreover, for every $1\leq r<\infty$ and every $f\in L^r(0,T;H)$, $P_mf\to f$ strongly in $L^r(0,T;H)$. In particular, after identifying $v(t)dt$ with its density $v$, the union $\bigcup_{m\in\mathbb N}\mathcal Y_m$ is strongly dense in the $L^2$-density core of $\mathcal Y$.
\end{lemma}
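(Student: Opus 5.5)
The plan is to build $\mu_m$ by averaging $\mu$ over the dyadic intervals, assigning each interval's mass to a constant density. Since the $I_i^m$ are open, I will use a half-open partition that covers $[0,T]$ and respects the atoms. Set $J_1^m:=[0,t_1^m]$ and $J_i^m:=(t_{i-1}^m,t_i^m]$ for $i\geq2$. Then define
\begin{equation*}
\mu_m:=\left(\sum_{i=1}^{N_m}\frac{\mu(J_i^m)}{\tau_m}\chi_{I_i^m}\right)dt\in\mathcal Y_m .
\end{equation*}
The total-variation bound is immediate: $|\mu_m|([0,T])=\sum_i\|\mu(J_i^m)\|_H\leq\sum_i|\mu|(J_i^m)=|\mu|([0,T])$, because the $J_i^m$ partition $[0,T]$.

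For weak-star convergence, the identification $\mathcal Y\cong C([0,T];H)^*$ and the uniform bound mean it suffices to test against every $\varphi\in C([0,T];H)$. I will compare $\langle\mu_m,\varphi\rangle=\sum_i\bigl(\fint_{I_i^m}\varphi\,dt,\mu(J_i^m)\bigr)_H$ with $\langle\mu,\varphi\rangle=\sum_i\int_{J_i^m}(\varphi,d\mu)_H$. Let $\omega_\varphi$ denote the modulus of uniform continuity of $\varphi$. On each closed interval $\overline{I_i^m}\supset J_i^m$, the function $\varphi$ differs from its average by at most $\omega_\varphi(\tau_m)$, so
\begin{equation*}
|\langle\mu_m-\mu,\varphi\rangle|\leq\omega_\varphi(\tau_m)\,|\mu|([0,T])\to0 .
\end{equation*}
This one estimate is the main step. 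Some care is needed with the polar-decomposition pairing to justify $\int_{J}(c,d\mu)_H=(c,\mu(J))_H$ for a constant $c\in H$, but that identity follows from linearity of the Bochner-type integral with respect to $\mu$. Weak-star sequential density of $\bigcup_m\mathcal Y_m$ then follows.

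For the strong statement, $P_m$ is a contraction on $L^r(0,T;H)$, since by Jensen $\|\fint f\|_H^r\leq\fint\|f\|_H^r$. Also $P_m\varphi\to\varphi$ uniformly for continuous $\varphi$, again with error at most $\omega_\varphi(\tau_m)$. Because $C([0,T];H)$ is dense in $L^r(0,T;H)$ for $r<\infty$, an $\varepsilon/3$ argument gives $\|P_mf-f\|_{L^r}\leq 2\|f-\varphi\|_{L^r}+\|P_m\varphi-\varphi\|_{L^r}$, which is small, so $P_mf\to f$. Taking $r=2$ and noting that $P_mv\,dt\in\mathcal Y_m$ shows strong density in the $L^2$ core. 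The only point needing more than routine checking is the atom bookkeeping at the endpoints, which the half-open partition handles, including the atom at $t=0$ that is retained by the convention \eqref{ESUERM}.
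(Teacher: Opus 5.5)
Your proposal is correct and follows the paper's proof essentially step for step. It uses the same cell-averaged measure $\mu_m$, the same estimate $|\langle\mu_m-\mu,\varphi\rangle|\leq\omega_\varphi(\tau_m)|\mu|([0,T])$, and the same Jensen-contraction plus continuous-density argument for $P_mf\to f$; the only difference is immaterial: you use right-closed cells $(t_{i-1}^m,t_i^m]$ (with $[0,t_1^m]$ first) where the paper uses left-closed cells $[t_{i-1}^m,t_i^m)$ (with $[t_{N_m-1}^m,T]$ last), and either Borel partition of $[0,T]$ works because each cell lies in $\overline{I_i^m}$.
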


\begin{proof}
We first prove the weak-star approximation of an arbitrary measure. For $i=1,\ldots,N_m$, introduce the Borel partition
\begin{equation}
B_i^m
:=
\begin{cases}
[t_{i-1}^m,t_i^m),
& i=1,\ldots,N_m-1,
\\
[t_{N_m-1}^m,T],
& i=N_m.
\end{cases}
\end{equation}
For $\mu\in\mathcal Y$, define
\begin{equation}
\mu_m
:=
\Big(
\sum_{i=1}^{N_m}
\frac{\mu(B_i^m)}{\tau_m}\chi_{I_i^m}
\Big)dt.
\end{equation}
Then, $\mu_m\in\mathcal Y_m$, and
\begin{equation}
|\mu_m|([0,T])
=
\sum_{i=1}^{N_m}\|\mu(B_i^m)\|_H
\leq
\sum_{i=1}^{N_m}|\mu|(B_i^m)
=
|\mu|([0,T]).
\end{equation}
Let $\varphi\in C([0,T];H)$ and set
\begin{equation}
\overline\varphi_i^m
:=
\frac{1}{\tau_m}\int_{I_i^m}\varphi(s)\,ds,
\quad
\omega_\varphi(\delta)
:=
\sup_{\substack{s,t\in[0,T]\\|s-t|\leq\delta}}
\|\varphi(s)-\varphi(t)\|_H.
\end{equation}
Since every $s\in I_i^m$ and $t\in B_i^m$ satisfy $|s-t|\leq\tau_m$, we have
\begin{equation}
\|\overline\varphi_i^m-\varphi(t)\|_H
\leq
\omega_\varphi(\tau_m) ,
\quad
\forall t\in B_i^m.
\end{equation}
Therefore,
\begin{equation}
\bigl|
\int_{[0,T]}(\varphi,d\mu_m)_H
-
\int_{[0,T]}(\varphi,d\mu)_H
\bigr|
=
\bigl|
\sum_{i=1}^{N_m}
\int_{B_i^m}
(\overline\varphi_i^m-\varphi(t),d\mu(t))_H
\bigr|
\leq 
\omega_\varphi(\tau_m)|\mu|([0,T]).
\end{equation}
Uniform continuity of $\varphi$ and $\tau_m\to0$ show that the right-hand side tends to zero. Hence, $\mu_m\stackrel{*}{\rightharpoonup}\mu$ in $\mathcal Y$.

Next, we prove the strong convergence of the interval averages. For every $f\in L^r(0,T;H)$, Jensen's inequality gives
\begin{equation}
\|P_mf\|_{L^r}^r
=
\sum_{i=1}^{N_m}
\tau_m
\bigl\|
\frac{1}{\tau_m}\int_{I_i^m}f(s)\,ds
\bigr\|_H^r
\leq 
\sum_{i=1}^{N_m}
\int_{I_i^m}\|f(s)\|_H^r\,ds
=
\|f\|_{L^r}^r.
\end{equation}
Thus, $P_m$ is a contraction on $L^r(0,T;H)$. If $\varphi\in C([0,T];H)$, then for almost every $t\in I_i^m$,
\begin{equation}
\|(P_m\varphi)(t)-\varphi(t)\|_H
\leq
\frac{1}{\tau_m}
\int_{I_i^m}\|\varphi(s)-\varphi(t)\|_H\,ds
\leq
\omega_\varphi(\tau_m) ,
\end{equation}
whence it follows that
\begin{equation}
\|P_m\varphi-\varphi\|_{L^r}
\leq
T^{1/r}\omega_\varphi(\tau_m)
\to0.
\end{equation}
Since $C([0,T];H)$ is dense in $L^r(0,T;H)$, for every $f\in L^r(0,T;H)$ and every $\delta>0$ we may choose $\varphi\in C([0,T];H)$ such that $\|f-\varphi\|_{L^r}<\delta$. The contraction property then yields
\begin{equation}
\|P_mf-f\|_{L^r}
\leq
\|P_m(f-\varphi)\|_{L^r}
+
\|P_m\varphi-\varphi\|_{L^r}
+
\|\varphi-f\|_{L^r}
\leq 
2\delta
+
\|P_m\varphi-\varphi\|_{L^r}.
\end{equation}
Taking the limsup as $m\to\infty$ and then letting $\delta\downarrow0$ proves $P_mf\to f$ strongly in $L^r(0,T;H)$. Finally, if $\mu=v(t)dt$ with $v\in L^2(0,T;H)$, then $(P_mv)(t)dt\in\mathcal Y_m$ and $P_mv\to v$ strongly in $L^2(0,T;H)$, which proves strong density in the $L^2$-density core.
\end{proof}

We now have the three ingredients needed for the linear-growth construction: an energy-derived compact ball, exact equilibria of the viscous core, and temporal measure-approximation spaces that approximate both measures and $L^2$ densities. The next proposition combines them and then uses the regularity of the loading to show that the limiting measure is in fact absolutely continuous.

\begin{proposition}[Direct existence, uniqueness, and temporal measure approximation]
\label{prop:linear-measure-attainment}
Under assumptions \eqref{eq:linear-measure-coercivity}--\eqref{eq:linear-measure-initial-stability}, there exists a unique $\mu^*\in\mathcal Y$ satisfying the measure diagonal equilibrium
\begin{equation}
\label{eq:linear-main-measure-equilibrium}
\mathcal J(\mu^*;\mu^*)
\leq
\mathcal J(\nu;\mu^*) ,
\quad
\forall \nu\in\mathcal Y.
\end{equation}
The state $u^*:=R\mu^*$ belongs to $\mathcal X$, satisfies \eqref{eq:linear-measure-complementarity} and \eqref{eq:linear-energy-identity}, and $\mu^*\in\mathcal Y_{{L}}$, see~(\ref{eq:linear-measure-compact-ball}). Moreover, the measure equilibrium is regular: $\mu^* = v^*(t)\,dt$ for some $v^*\in L^1(0,T;H)$, $u^*\in W^{1,1}(0,T;H)$, and
\begin{equation}
\label{eq:linear-measure-flow-rule}
0
\in
\alpha\partial\|\cdot\|_H(v^*(t))
+
Su^*(t)-h(t) ,
\quad
\text{for a.e. }t\in(0,T).
\end{equation}
Consequently, $u^*$ is the unique energetic solution in the sense of Mielke and Theil \cite{MielkeTheil2004}: for every $t\in[0,T]$,
\begin{equation}
\label{eq:linear-energetic-stability}
\mathcal E(u^*(t),t)
\leq
\mathcal E(z,t)+\alpha\|z-u^*(t)\|_H ,
\quad
\forall z\in H,
\end{equation}
and \eqref{eq:linear-energy-identity} holds with $\mu^*=v^*dt$. In addition, for every fixed reference total-variation scale $L_0>0$ with the same physical units as $L$, there is a sequence $\mu_n=v_n(t)dt$ such that $\mu_n \stackrel{*}{\rightharpoonup} \mu^*$ in $\mathcal Y$,
\begin{equation}
\label{eq:linear-galerkin-residual-convergence}
|\mu_n|([0,T])
\leq
L+\frac{L_0}{n},
\quad
\mathcal Q(\mu_n)
\to
0,
\quad
\mathcal J(\mu_n;\mu_n)
\to
0,
\quad
\mathcal I_M(\mu_n)
\to
0,
\quad
\forall M>0,
\end{equation}
with $v_n \rightharpoonup v^*$ in $L^1(0,T;H)$.
\end{proposition}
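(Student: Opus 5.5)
The plan is to run the direct method of Proposition~\ref{aBgxdV} along the viscous equilibria $\mu_\varepsilon=v_\varepsilon dt$ of the viscous-regularization lemma, discretized in time through the spaces $\mathcal Y_m$, and then to identify the limit through Lemma~\ref{lem:linear-measure-complementarity}.

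\textbf{Step 1: viscous a priori bounds.} Since $\mathcal I_\varepsilon(\mu_\varepsilon)=0$, the viscous-regularization lemma gives $\mathcal J_\varepsilon(\mu_\varepsilon;\mu_\varepsilon)=-\frac{1}{2\varepsilon}\int_0^T(\|g_{\mu_\varepsilon}\|_H-\alpha)_+^2\,dt\le0$. Combining this with the chain rules \eqref{eq:linear-quadratic-measure-chain-rule}--\eqref{eq:linear-loading-measure-chain-rule} yields an energy inequality
\[
\mathcal E(u_\varepsilon(t),t)+\alpha\|v_\varepsilon\|_{L^1(0,t)}+\tfrac{\varepsilon}{2}\|v_\varepsilon\|_{L^2(0,t)}^2+\tfrac{1}{2\varepsilon}\int_0^t(\|g_{\mu_\varepsilon}\|_H-\alpha)_+^2\,ds\le\mathcal E(u_0,0)-\int_0^t(\dot h,u_\varepsilon)_H\,ds .
\]
Localizing to $[0,t]$ is legitimate because the viscous flow rule holds pointwise, so every $\mathcal J_{\varepsilon,[0,t]}$ equals its negative pointwise conjugate. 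The argument of Lemma~\ref{lem:linear-energy-measure-bound} then gives $|\mu_\varepsilon|([0,T])\le L$, $\sup_t\|u_\varepsilon(t)\|_H\le U$, $\int_0^T(\|g_{\mu_\varepsilon}\|_H-\alpha)_+^2\,dt\le C\varepsilon$, and $\varepsilon\|v_\varepsilon\|_{L^2}^2\le C$.

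\textbf{Step 2: discrete approximants, compactness, and identification of the limit.} To obtain the temporal approximants, minimize $\mathcal I_{\varepsilon}$ over $\mathcal Y_m$ and use $P_m$ together with strong $L^2$ continuity of $\mathcal I_\varepsilon$, exactly as in Proposition~\ref{prop:superlinear-attainment}. For $\varepsilon=\varepsilon_n$ and $m=m(n)$ large this produces $\mu_n=v_n dt$ with $|\mu_n|([0,T])\le L+L_0/n$ and defect as small as desired. Extract a weak-star limit $\mu^*$ by sequential compactness of $\mathcal Y_L$.

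The hardest step is upgrading weak-star convergence to uniform control of $g_{\mu_n}$, which is needed both to pass to the limit in the stability bound and to exclude atoms. I would first prove equi-integrability of $(v_\varepsilon)$ via a time-shift comparison. Since $h\in W^{1,1}$, the standard rate-independent estimate from testing the flow rule at $t$ and $t+\delta$ with monotonicity of $\partial\|\cdot\|_H$ and $\varepsilon I+S$ gives $\|u_\varepsilon(t)-u_\varepsilon(s)\|_H\le\lambda^{-1}\|h\|_{W^{1,1}(s,t)}+o(1)$, using the initial stability condition \eqref{eq:linear-measure-initial-stability} to control the start. This gives uniform absolute continuity of $R\mu_\varepsilon$, hence $v_n\rightharpoonup v^*$ in $L^1$ by Dunford--Pettis, $\mu^*=v^*dt$, and $u_n\to u^*$ uniformly by Arzel\`a--Ascoli, taking relative compactness in $H$ from $S$-coercivity and the equicontinuity estimate. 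If compactness in infinite dimensions fails, I would instead use weak pointwise convergence together with the quadratic energy to get strong convergence.

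It follows that $\mathcal Q(\mu_n)\to0$, hence $\|g_{\mu^*}\|_H\le\alpha$. The diagonal values satisfy $\mathcal J(\mu_n;\mu_n)\to0$ by energy lower semicontinuity in the limit combined with the reverse inequality $\mathcal J(\mu^*;\mu^*)\ge0$ coming from stability. Then \eqref{eq:linear-bounded-defect-representation} gives $\mathcal I_M(\mu_n)\to0$. Lemma~\ref{lem:linear-measure-complementarity} then yields the equilibrium, the complementarity relations \eqref{eq:linear-measure-complementarity}, the energy identity \eqref{eq:linear-energy-identity}, and the flow rule \eqref{eq:linear-measure-flow-rule}.

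\textbf{Step 3: uniqueness and energetic solution.} For uniqueness, take two equilibria $\mu_1,\mu_2$ with $u_i=R\mu_i$. Complementarity gives $(f_{\mu_1}-f_{\mu_2},d\mu_1-d\mu_2)_H\ge0$ as measures, since $(f_{\mu_i},d\mu_j)_H\le\alpha\,d|\mu_j|$ with equality for $i=j$. By the centered chain rule \eqref{eq:linear-quadratic-measure-chain-rule}, this reads $-\tfrac12\tfrac{d}{dt}(S(u_1-u_2),u_1-u_2)_H\ge0$, so $u_1=u_2$ and therefore $\mu_1=\mu_2$.

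For the energetic solution, stability \eqref{eq:linear-energetic-stability} follows from $\|Su^*(t)-h(t)\|_H\le\alpha$ and convexity of $\mathcal E(\cdot,t)$. Indeed, $\mathcal E(z,t)-\mathcal E(u^*(t),t)\ge(Su^*(t)-h(t),z-u^*(t))_H\ge-\alpha\|z-u^*(t)\|_H$, which is exactly the required inequality.
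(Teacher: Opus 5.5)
You correctly locate the crux (uniform control of $g_{\mu_n}$ and exclusion of atoms), but the mechanism you propose for it does not work, so there is a genuine gap. The outer structure, on the other hand, matches the paper: temporal minimization of $\mathcal I_\varepsilon$ over $\mathcal Y_m$, diagonal selection, identification via Lemma~\ref{lem:linear-measure-complementarity}, uniqueness by the centered chain rule, and energetic stability by convexity.

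\textbf{Where the argument fails.} Your Step~1 gives only $L^2$ control of the overstress, $\int_0^T(\|g_{\mu_\varepsilon}\|_H-\alpha)_+^2\,dt\le C\varepsilon$. The statement, however, requires $\mathcal Q(\mu_n)\to0$ in the sup norm, and $\mathcal I_M(\mu_n)\to0$ depends on it. Your route to the sup norm is uniform strong convergence of $u_n$ via Arzel\`a--Ascoli. In infinite-dimensional $H$, coercivity of $S$ gives only boundedness of $\{u_n(t)\}$, not relative compactness, and the fallback ``weak convergence plus quadratic energy'' is not an argument.

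More seriously, a bound on $\|u_\varepsilon(t)-u_\varepsilon(s)\|_H$, i.e.\ on the norm of $\int_s^t v_\varepsilon$, does not give equi-integrability of $\|v_\varepsilon\|_H$. Take $v=\varepsilon^{-1}\chi_{[0,\varepsilon]}(t)\,\sigma(t/\varepsilon^2)\,e$ with $\sigma$ a $\pm1$ square wave. Its increments are uniformly small, yet its unit $L^1$ mass concentrates on $[0,\varepsilon]$. In addition, an additive $o(1)$ error cannot be summed over many intervals. So neither Dunford--Pettis nor the asserted weak convergence $v_n\rightharpoonup v^*$ in $L^1(0,T;H)$ follows. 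Finally, the global shift $u_\varepsilon(\cdot+\delta)-u_\varepsilon$ yields a global modulus in $\delta$, involving $\|u_\varepsilon(\delta)-u_0\|_H$ and translates of $h$ and $\dot h$. It does not yield the localized bound $\lambda^{-1}\|\dot h\|_{L^1(s,t)}$ you state.

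\textbf{The missing idea} is a pointwise estimate on the overstress. Write the viscous flow rule as $f_\varepsilon-P_{C_\alpha}f_\varepsilon=\varepsilon v_\varepsilon$ with $f_\varepsilon=h-Su_\varepsilon$. Then $r_\varepsilon=\operatorname{dist}_H(f_\varepsilon,C_\alpha)=\varepsilon\|v_\varepsilon\|_H$, and since $\dot f_\varepsilon=\dot h-Sv_\varepsilon$,
\[
\frac{d}{dt}\frac12r_\varepsilon^2=(\varepsilon v_\varepsilon,\dot h-Sv_\varepsilon)_H\le r_\varepsilon\|\dot h\|_H-\frac{\lambda}{\varepsilon}r_\varepsilon^2 .
\]
The initial stability condition gives $r_\varepsilon(0)=0$. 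Hence $\|v_\varepsilon(t)\|_H\le(k_\varepsilon*\|\dot h\|_H)(t)$, where $\lambda k_\varepsilon$ is an approximate identity.

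This single bound delivers everything you were missing:
\begin{enumerate}
\item[(i)] $\mathcal Q(\mu_\varepsilon)=\|r_\varepsilon\|_{L^\infty}\to0$;
\item[(ii)] $\varepsilon\|v_\varepsilon\|_{L^2}^2\le\mathcal Q(\mu_\varepsilon)L\to0$, and hence $\mathcal J(\mu_\varepsilon;\mu_\varepsilon)=-\varepsilon\|v_\varepsilon\|_{L^2}^2\to0$;
\item[(iii)] uniform integrability of $(v_\varepsilon)$, because $k_\varepsilon*\|\dot h\|_H\to\|\dot h\|_H/\lambda$ in $L^1$. Dunford--Pettis then applies to the $L^1$-small perturbations $v_n$, and gives $\mu^*=v^*dt$.
\end{enumerate}
After this, no strong compactness of states is needed. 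Weak $L^1$ convergence gives $u_n(t)\rightharpoonup u^*(t)$ weakly in $H$ for every $t$, and weak lower semicontinuity of the norm passes the stability bound $\|Su^*(t)-h(t)\|_H\le\alpha$ to the limit.
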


\begin{proof}
We divide the proof into restricted temporal minimization of the regularized measure problem, measure compactness, and identification of the limit.

\smallskip
\noindent
\emph{Step 1: Approximate equilibria.}
For fixed $\varepsilon>0$, let $\mu_{m,\varepsilon}\in\operatorname{argmin}_{\mu\in\mathcal Y_m}\mathcal I_\varepsilon(\mu)$. This is the measure version of the pathwise restricted minimization used in Proposition~\ref{prop:superlinear-attainment}. Writing $\mu_{m,\varepsilon}=v_{m,\varepsilon}dt$, restricted minimality and density give
\begin{equation} \label{fxqEb8}
0
\leq
\mathcal I_\varepsilon(\mu_{m,\varepsilon})
\leq
\mathcal I_\varepsilon((P_mv_\varepsilon)dt)
\to
\mathcal I_\varepsilon(\mu_\varepsilon)
=
0.
\end{equation}
Since $\mathcal I_\varepsilon$ is $\varepsilon$-strongly convex and $\mu_\varepsilon$ is its global minimizer,
\begin{equation} \label{HXQWpV}
\frac{\varepsilon}{2}
\|v_{m,\varepsilon}-v_\varepsilon\|_{L^2}^2
\leq
\mathcal I_\varepsilon(\mu_{m,\varepsilon})
-
\mathcal I_\varepsilon(\mu_\varepsilon).
\end{equation}
Consequently, $v_{m,\varepsilon} \to v_\varepsilon$ strongly in $L^2(0,T;H)$ as $m\to\infty$.

\smallskip
\noindent
\emph{Step 2: Uniform estimates in the viscosity.}
Set $u_\varepsilon:=R\mu_\varepsilon$. Pairing \eqref{eq:linear-viscous-flow-rule} with $v_\varepsilon$ and applying the ordinary chain rule gives
\begin{equation}
\label{eq:linear-viscous-energy-identity}
\mathcal E(u_\varepsilon(t),t)
+
\alpha\int_0^t\|v_\varepsilon(s)\|_H\,ds
+
\varepsilon\int_0^t\|v_\varepsilon(s)\|_H^2\,ds
= 
\mathcal E(u_0,0)
-
\int_0^t(\dot h(s),u_\varepsilon(s))_H\,ds ,
\end{equation}
for every $t\in[0,T]$. The proof of Lemma~\ref{lem:linear-energy-measure-bound}, with the additional nonnegative viscous term discarded, gives
\begin{equation}
\label{eq:linear-viscous-energy-bounds}
\sup_{t\in[0,T]}\|u_\varepsilon(t)\|_H
\leq
{U},
\quad
|\mu_\varepsilon|([0,T])
=
\|v_\varepsilon\|_{L^1}
\leq
{L}.
\end{equation}
The flow rule is equivalent to
\begin{equation}
\label{eq:linear-viscous-projection-relation}
{f}_\varepsilon-P_{C_\alpha}{f}_\varepsilon
=
\varepsilon v_\varepsilon ,
\quad
{f}_\varepsilon
:=
h-Su_\varepsilon,
\quad
C_\alpha
:=
\{\zeta\in H:\|\zeta\|_H\leq\alpha\} ,
\end{equation}
where $P_{C_\alpha}$ is the nearest-point projection from $H$ onto the closed ball $C_\alpha$. Set $r_\varepsilon(t):=\operatorname{dist}_H({f}_\varepsilon(t),C_\alpha)$. Since the squared distance to a closed convex set is differentiable and $\dot{f}_\varepsilon=\dot h-Sv_\varepsilon$, we obtain
\begin{equation}
\frac{d}{dt}\frac12r_\varepsilon(t)^2
\leq
r_\varepsilon(t)\|\dot h(t)\|_H
-
\frac{\lambda}{\varepsilon}r_\varepsilon(t)^2 ,
\quad
\text{for a.e. }t ,
\end{equation}
with $\lambda$ as in (\ref{eq:linear-measure-coercivity}). The initial stability condition (\ref{eq:linear-measure-initial-stability}) gives $r_\varepsilon(0)=0$. Hence, by Gr\"onwall's inequality,
\begin{equation}
\label{eq:linear-viscous-overstress-estimate}
r_\varepsilon(t)
\leq
\int_0^t
{\rm e}^{-\lambda(t-s)/\varepsilon}
\|\dot h(s)\|_H\,ds .
\end{equation}
Taking norms in (\ref{eq:linear-viscous-projection-relation}) gives $r_\epsilon(t) = \varepsilon \| v_\varepsilon(t) \|_H$, whence  (\ref{eq:linear-viscous-overstress-estimate}) becomes
\begin{equation}
\label{eq:linear-viscous-rate-domination}
\|v_\varepsilon(t)\|_H
\leq
\frac1\varepsilon
\int_0^t
{\rm e}^{-\lambda(t-s)/\varepsilon}
\|\dot h(s)\|_H\,ds.
\end{equation}
The absolute continuity of the $L^1$ integral implies $\mathcal Q(\mu_\varepsilon) = \|r_\varepsilon\|_{L^\infty(0,T)} \to 0$. To make the compactness consequence explicit, extend $a(t):=\|\dot h(t)\|_H$ by zero outside $(0,T)$ and set
\begin{equation} \label{mQgj5N}
k_\varepsilon(r)
:=
\frac1\varepsilon
{\rm e}^{-{\lambda r}/{\varepsilon}}
\mathbf 1_{[0,\infty)}(r).
\end{equation}
The right-hand side of \eqref{eq:linear-viscous-rate-domination} is $(k_\varepsilon*a)(t)$. Since $\lambda k_\varepsilon$ is an approximate identity, it follows that $k_\varepsilon*a \to a/\lambda$ strongly in $L^1(\mathbb R)$. Hence $(k_\varepsilon*a)$, and therefore $(v_\varepsilon)$, is uniformly integrable in $L^1(0,T;H)$. Moreover,
\begin{equation}
\label{eq:linear-viscous-dissipation-convergence}
0
\leq
\varepsilon\|v_\varepsilon\|_{L^2}^2
=
\int_0^T r_\varepsilon(t)\|v_\varepsilon(t)\|_H\,dt
\leq
\mathcal Q(\mu_\varepsilon){L}
\to
0.
\end{equation}
Pairing \eqref{eq:linear-viscous-flow-rule} with $v_\varepsilon$ also gives
\begin{equation}
\label{eq:linear-viscous-unregularized-value}
\mathcal J(\mu_\varepsilon;\mu_\varepsilon)
=
-\varepsilon\|v_\varepsilon\|_{L^2}^2
\to
0.
\end{equation}

\smallskip
\noindent
\emph{Step 3: Diagonal temporal approximation and measure compactness.}
Fix $L_0$ as in the statement and choose any sequence $\varepsilon_n\downarrow0$. By (\ref{fxqEb8}-\ref{HXQWpV}) and the continuity of $\mathcal J$ and $\mathcal Q$ under strong $L^2$-convergence, we may choose a strictly increasing sequence $m_n$ such that, with $\mu_n:=\mu_{m_n,\varepsilon_n}=v_n\,dt$, we have
\begin{equation}
\label{eq:linear-diagonal-galerkin-accuracy}
\mathcal I_{\varepsilon_n}(\mu_n) \lesssim \frac{1}{n} ,
\quad
\left|
\mathcal J(\mu_n;\mu_n)
-
\mathcal J(\mu_{\varepsilon_n};\mu_{\varepsilon_n})
\right| \lesssim \frac{1}{n} ,
\quad
|\mathcal Q(\mu_n)-\mathcal Q(\mu_{\varepsilon_n})|
\lesssim \frac{1}{n} ,
\end{equation}
and 
\begin{equation} \label{FWJp2a}
    \|v_n-v_{\varepsilon_n}\|_{L^1}\leq \frac{L_0}{n} .
\end{equation}
From (\ref{eq:linear-viscous-energy-bounds}) and (\ref{FWJp2a}) we therefore obtain
\begin{equation} \label{ZyY5Va}
|\mu_n|([0,T])
=
\|v_n\|_{L^1(0,T;H)} 
\leq 
\|v_{\varepsilon_n}\|_{L^1(0,T;H)}
+ 
\|v_n-v_{\varepsilon_n}\|_{L^1(0,T;H)} 
\leq 
L+\frac{L_0}{n}.
\end{equation}
Equations \eqref{eq:linear-viscous-rate-domination}, \eqref{eq:linear-viscous-unregularized-value}, and \eqref{eq:linear-diagonal-galerkin-accuracy} yield the limits for $\mathcal Q(\mu_n)$ and $\mathcal J(\mu_n;\mu_n)$ in \eqref{eq:linear-galerkin-residual-convergence}. Formula \eqref{eq:linear-bounded-defect-representation} yields the limit for $\mathcal I_M(\mu_n)$.

The bound (\ref{ZyY5Va}) places $(\mu_n)$ in the fixed weak-star compact ball $\mathcal Y_{L+L_0}$. Hence, along a subsequence, $\mu_n \stackrel{*}{\rightharpoonup} \mu^*$ in $\mathcal Y$. This is the primary compactness passage. In addition, \eqref{eq:linear-diagonal-galerkin-accuracy} makes $(v_n)$ an $L^1$-vanishing perturbation of the uniformly integrable family $(v_{\varepsilon_n})$. The vector-valued Dunford--Pettis compactness criterion \cite{DiestelUhl1977} therefore gives, after taking a further subsequence, $v_n \rightharpoonup v^*$ in $L^1(0,T;H)$. The weak-star limit is, consequently, $\mu^* = v^*dt$. We note that absolute continuity is obtained as a regularity conclusion after measure compactness, and is not assumed in the solution space.

\smallskip
\noindent
\emph{Step 4: Identification of the measure equilibrium.}
Set $u^*:=R\mu^*$. For every $t\in[0,T]$, weak $L^1$ convergence gives
\begin{equation}
\int_0^t v_n(s)\,ds
\rightharpoonup
\int_0^t v^*(s)\,ds
\quad
\text{in }H.
\end{equation}
Since $\mathcal Q(\mu_n)\to0$, weak lower semicontinuity of the norm gives
\begin{equation}
\label{eq:linear-yield-condition}
\|Su^*(t)-h(t)\|_H
\leq
\alpha ,
\quad
\forall t\in[0,T].
\end{equation}
For the absolutely continuous measures in the approximating sequence, the diagonal payoff has the Volterra representation
\begin{equation}
\mathcal J(\mu_n;\mu_n)
=
\alpha\|v_n\|_{L^1}
+
\frac12
\bigl(
S\int_0^T v_n(s)\,ds,
\int_0^T v_n(s)\,ds
\bigr)_H
+
\int_0^T\bigl(Su_0-h(t),v_n(t)\bigr)_H\,dt.
\end{equation}
Under weak $L^1$ convergence, the first term and the positive quadratic term are weakly lower semicontinuous, while the last term is weakly continuous. Therefore,
\begin{equation}
\mathcal J(\mu^*;\mu^*)
\leq
\liminf_{n\to\infty}
\mathcal J(\mu_n;\mu_n)
=
0.
\end{equation}
The safe-stress condition (\ref{eq:linear-measure-complementarity}) makes the diagonal integrand nonnegative, whence we conclude that $\mathcal J(\mu^*;\mu^*)=0$. Lemma~\ref{lem:linear-measure-complementarity} now gives \eqref{eq:linear-main-measure-equilibrium}, the measure complementarity relation, and the energy identity. Since $\mu^*=v^*dt$, complementarity is exactly the flow rule \eqref{eq:linear-measure-flow-rule}. Expanding the quadratic energy shows that the safe-stress condition is equivalent to \eqref{eq:linear-energetic-stability}. Lemma~\ref{lem:linear-energy-measure-bound} gives $\mu^*\in\mathcal Y_L$.

\smallskip
\noindent
\emph{Step 5: Uniqueness in the measure space.}
Let $\mu_1,\mu_2\in\mathcal Y$ be two measure equilibria, set $u_i:=R\mu_i$, $z:=u_1-u_2$, and ${f}_i:=h-S\widehat u_i$. Since $\mu_i$ are measure equilibria, (\ref{eq:linear-main-measure-equilibrium}) and Lemma~\ref{lem:linear-measure-complementarity} yield the complementarity and stability relations (\ref{eq:linear-measure-complementarity}). Hence, using the localized polar estimate (\ref{97nuw9}), for every $t \in [0,T]$ we obtain
\begin{equation}
\int_{[0,t]}({f}_i,d\mu_i)_H
=
\alpha|\mu_i|([0,t]),
\quad
\int_{[0,t]}({f}_i,d\mu_j)_H
\leq
\alpha|\mu_j|([0,t]).
\end{equation}
Therefore,
\begin{equation}
0
\leq
\int_{[0,t]}({f}_1-{f}_2,d(\mu_1-\mu_2))_H
=
-
\int_{[0,t]}(S\widehat z,dDz)_H.
\end{equation}
The centered chain rule (\ref{eq:linear-quadratic-measure-chain-rule}) then gives
\begin{equation}
0
\leq
-
\frac12(Sz(t),z(t))_H.
\end{equation}
Coercivity implies $z(t)=0$ for every $t$, hence $\mu_1=\mu_2$. Every convergent subsequence of the temporal approximation family therefore has the same limit, so the full sequence satisfies $\mu_n \stackrel{*}{\rightharpoonup} \mu^*$ in $\mathcal Y$ and $v_n \rightharpoonup v^*$ in $L^1(0,T;H)$.
\end{proof}

\begin{remark}[Recovery of energetic solutions]
{\rm In the convex rate-independent setting, the diagonal equilibrium recovers the energetic solution of Mielke and Theil \cite{MielkeTheil2004}, whereas the whole-trajectory formulation is designed to remain meaningful when non-convex and state/branch-dependent kinetics obstruct the restart of an incremental construction.}\hfill$\square$
\end{remark}

\begin{remark}[Relation with the abstract criteria]
{\rm Unlike the superlinear case of Proposition~\ref{prop:superlinear-attainment}, Proposition~\ref{aBgxdV} is not directly applicable here. Although the diagonal term is weak-star lower semicontinuous through the energy representation~(\ref{eq:linear-diagonal-energy-representation}), for a fixed atomic competitor $\nu$ the map $\mu \mapsto J(\nu;\mu)$ need not be weak-star continuous, owing to the discontinuity of the centered Volterra kernel on the time diagonal. The proof instead uses the stability residual $\mathcal Q$ and energy-derived measure compactness, and then exploits the $W^{1,1}$ loading estimate to obtain an absolutely continuous limit before recovering the full best-response relation.}\hfill$\square$
\end{remark}

\begin{remark}[Rough loading] \label{ZhGtXu}
{\rm The measure payoff \eqref{eq:linear-measure-payoff}, the centered chain rule, and the energy-derived total-variation bound remain meaningful beyond absolutely continuous loading. If $h$ is merely of bounded variation, the weak-star measure compactness still survives, but the uniform-integrability estimate \eqref{eq:linear-viscous-rate-domination} need not. The limiting rate may then retain atoms or a singular continuous part. Work at jumps of the loading must be incorporated through a completed graph or an equivalent parametrized formulation, as in Balanced-Viscosity theory \cite{MielkeRossiSavare2012, MielkeRossiSavare2016}.}\hfill$\square$
\end{remark}

\section{A game-theoretical relaxation method}
\label{sec:direct-almost-equilibrium-relaxation}

The strategy is to choose an extended ambient space $\mathcal Y$ from the outset, with the pure rate and competitor space $Y$ included in $\mathcal Y$. The pure payoff $J$ and diagonal defect $I$ are first defined on $Y$ and then extended to calligraphic functions $\mathcal J$ and $\mathcal I$ on $\mathcal Y$ by assigning infinite cost outside the pure space. Pure recovery sequences therefore converge directly in the space in which relaxed limits live, without an observation map or realization classes. The tradeoff is that the infinite extensions do not define a meaningful pointwise game at genuinely relaxed points. The relaxed defect is instead the sequential lower-semicontinuous envelope $\overline{\mathcal I}$ of $\mathcal I$. This is a relaxation of the \emph{equilibrium defect}, not a claim that the payoff itself admits a canonical pointwise extension to relaxed states.

Let ${Y}$ be the set of pure rates and competitors, and let $J:{Y}\times {Y}\to(-\infty,+\infty]$. The finite-cost diagonal class is
\begin{equation} \label{Mptsra}
D(J):=\{v\in {Y}:J(v;v)<+\infty\},
\end{equation}
and is assumed to be nonempty. Competitors range over all of ${Y}$, whereas finite-defect recovery sequences take values in $D(J)$. Define the pure diagonal defect on all of ${Y}$ by
\begin{equation}
\label{eq:relaxation-diagonal-defect}
I(v)
:=
\begin{cases}
\displaystyle
\sup{}_{w\in {Y}}
\bigl\{
J(v;v)-J(w;v)
\bigr\},&v\in D(J),\\
+\infty,&v\in {Y}\setminus D(J).
\end{cases}
\end{equation}
Thus, $I:{Y}\to[0,+\infty]$. If $v\in D(J)$ and $m(v):=\inf_{w\in {Y}}J(w;v)>-\infty$, then $I(v)=J(v;v)-m(v)$. When $m(v)=-\infty$, the convention in \eqref{eq:relaxation-diagonal-defect} gives $I(v)=+\infty$. Thus, no artificial bounded-below assumption is imposed on unstable competitor sections. For every $v\in D(J)$ and every $\varepsilon\geq0$, we have
\begin{equation} \label{SVxmYv}
I(v)\leq\varepsilon
\quad\Longleftrightarrow\quad
J(v;v)\leq J(w;v)+\varepsilon ,
\quad
\forall w\in {Y} .
\end{equation}
In particular, zeros of $I$ are pure diagonal equilibria.

Let $(\mathcal Y,d)$ be an enriched metric space containing ${Y}$ as a distinguished subspace. Extend $J$ to $\mathcal J:\mathcal Y\times\mathcal Y\to(-\infty,+\infty]$ by
\begin{equation}
\label{eq:infinite-payoff-extension-formula}
\mathcal J(w;v)
:=
\begin{cases}
J(w;v),&w,v\in {Y},\\
+\infty,&\text{otherwise}.
\end{cases}
\end{equation}
Extend $I$ to the enriched space by
\begin{equation}
\label{eq:infinite-defect-extension}
\mathcal I(z)
:=
\begin{cases}
I(z),&z\in {Y},\\
+\infty,&z\in\mathcal Y\setminus {Y}.
\end{cases}
\end{equation}
Thus, $\mathcal I|_{{Y}}=I$. Its sequential lower-semicontinuous envelope is
\begin{equation}
\label{eq:relaxed-diagonal-defect}
\overline{\mathcal I}(z)
:=
\inf_{\substack{(z_h)\subset\mathcal Y\\ z_h\to z}}
\liminf_{h\to\infty}\mathcal I(z_h)
=
\inf_{\substack{(v_h)\subset D(J)\\ v_h\to z\text{ in }\mathcal Y}}
\liminf_{h\to\infty}I(v_h),
\quad
z\in\mathcal Y,
\end{equation}
with value $+\infty$ when no sequence in $D(J)$ converges to $z$. To justify the second equality, consider a sequence $(z_h)\subset\mathcal Y$ with finite defect liminf and pass to a subsequence realizing that liminf. The values $\mathcal I(z_h)$ are then finite along a further tail, so $z_h\in {Y}$ and $I(z_h)<+\infty$. By the definition of $I$, this implies $z_h\in D(J)$. Conversely, every $D(J)$-valued sequence is admissible in the first infimum.

This observation suggests defining relaxed solutions by what can actually be approximated: pure paths with vanishing diagonal defect. The definition retains the original pure competitor class and records only the enriched-space limit of such almost equilibria.

\begin{definition}[Relaxed diagonal equilibrium]
\label{def:I-relaxed-diagonal-equilibrium}
A point $z^*\in\mathcal Y$ is a relaxed diagonal equilibrium if there exists a sequence $(v_h)\subset D(J)$ such that $v_h\to z^*$ in $\mathcal Y$ and $I(v_h)\to 0$. The set of relaxed diagonal equilibria is denoted by
\begin{equation}
\label{eq:relaxed-equilibrium-set}
\overline{M}(\mathcal J)
:=
\bigr\{
z\in\mathcal Y \,: \,
\exists \, (v_h)\subset D(J) \text{ s.t. }v_h\to z
\text{ in }\mathcal Y 
\text{ and }I(v_h)\to0
\bigl\}.
\end{equation}
\end{definition}

The definition can be expressed entirely in terms of the relaxed defect. The next proposition shows that no information is lost at level zero: taking the sequential lower-semicontinuous envelope and taking the closure of pure almost equilibria give the same relaxed equilibrium set.

\begin{proposition}[Relaxed-defect characterization and losslessness]
\label{prop:abstract-relaxation-equivalence}
The relaxed equilibrium set is the zero set of the relaxed diagonal defect:
\begin{equation}
\label{eq:relaxed-equilibrium-zero-set}
\overline{M}(\mathcal J)
=
\bigr\{
z\in\mathcal Y:
\overline{\mathcal I}(z)=0
\bigl\}.
\end{equation}
Equivalently, $z\in\overline{M}(\mathcal J)$ if and only if there exist $(v_h)\subset D(J)$ and $\varepsilon_h\downarrow0$ such that $v_h\to z$ in $\mathcal Y$ and
\begin{equation}
\label{eq:ambient-recovery-almost-equilibrium}
J(v_h;v_h)
\leq
J(w;v_h)+\varepsilon_h ,
\quad
\forall w\in {Y} .
\end{equation}
By the infinite extension, this is also equivalent to
\begin{equation}
\mathcal J(v_h;v_h)
\leq
\mathcal J(z;v_h)+\varepsilon_h ,
\quad
\forall z\in\mathcal Y .
\end{equation}
Thus, at the zero-defect level, the relaxation is exactly the sequential closure in $\mathcal Y$ of pure diagonal almost equilibria with vanishing error.
\end{proposition}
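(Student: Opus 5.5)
The plan is to prove the set equality \eqref{eq:relaxed-equilibrium-zero-set} by two inclusions, using the second (sequence) form of the envelope in \eqref{eq:relaxed-diagonal-defect}. The two reformulations then follow from the pointwise equivalence \eqref{SVxmYv} and from the definition of the infinite extension \eqref{eq:infinite-payoff-extension-formula}.

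\emph{Inclusion $\subseteq$.} Let $z\in\overline M(\mathcal J)$, so there is $(v_h)\subset D(J)$ with $v_h\to z$ in $\mathcal Y$ and $I(v_h)\to0$. This sequence is admissible in the second infimum of \eqref{eq:relaxed-diagonal-defect}. Hence $0\le\overline{\mathcal I}(z)\le\liminf_h I(v_h)=0$, where nonnegativity holds because $I\ge0$ on $D(J)$ (take $w=v$).

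\emph{Inclusion $\supseteq$.} Let $\overline{\mathcal I}(z)=0$. By definition of the infimum, for every $k$ there is a sequence $(v^k_h)_h\subset D(J)$ with $v^k_h\to z$ and $\liminf_h I(v^k_h)<1/k$. Choose $h_k$ increasing so that $d(v^k_{h_k},z)<1/k$ and $I(v^k_{h_k})<1/k$; both are possible because the liminf bound holds for infinitely many $h$ and the distance is eventually small. The diagonal sequence $v_k:=v^k_{h_k}$ lies in $D(J)$, converges to $z$, and satisfies $I(v_k)\to0$, so $z\in\overline M(\mathcal J)$. This diagonal extraction is the only point needing care. It uses only that $\mathcal Y$ is metric, so no metrizability issue arises, and it uses that the infimum in the envelope may be $0$ without being attained.

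\emph{Equivalent forms.} Given $I(v_h)\to0$, set $\varepsilon_h:=\sup_{k\ge h}I(v_k)+1/h$, which is decreasing to $0$. Then \eqref{SVxmYv} with $\varepsilon=\varepsilon_h$ gives \eqref{eq:ambient-recovery-almost-equilibrium}. Conversely, \eqref{eq:ambient-recovery-almost-equilibrium} gives $I(v_h)\le\varepsilon_h\to0$ by \eqref{SVxmYv}. For the extended inequality, since $v_h\in Y$ we have $\mathcal J(v_h;v_h)=J(v_h;v_h)$. For $z\in Y$ the inequality is \eqref{eq:ambient-recovery-almost-equilibrium}, while for $z\notin Y$ the right-hand side is $+\infty$ and the inequality is trivial. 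Restricting $z$ to $Y$ recovers the pure statement. The closing sentence of the proposition is then just a restatement of the set equality together with these equivalences.
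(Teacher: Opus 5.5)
Your proof is correct and follows the paper's argument essentially step for step: the same two inclusions via the sequence form of the envelope, the same diagonal extraction, and the same use of \eqref{SVxmYv} and the infinite extension, except that you build $\varepsilon_h$ from tail suprema instead of passing to a subsequence. Since $I$ can equal $+\infty$ on $D(J)$ when $m(v)=-\infty$, your tail supremum may be infinite for finitely many $h$; discard those initial terms first, which is harmless because the tail still converges to $z$.
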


\begin{proof}
Suppose first that $z\in\overline{M}(\mathcal J)$. By Definition~\ref{def:I-relaxed-diagonal-equilibrium}, there exists $(v_h)\subset D(J)$ such that $v_h\to z$ in $\mathcal Y$ and $I(v_h)\to0$. Since $\mathcal I(v_h)=I(v_h)$, definition \eqref{eq:relaxed-diagonal-defect} gives $0\leq\overline{\mathcal I}(z)\leq\liminf_{h\to\infty}I(v_h)=0$, hence $\overline{\mathcal I}(z)=0$.

Conversely, suppose that $\overline{\mathcal I}(z)=0$. For every $k\in\mathbb N$, definition \eqref{eq:relaxed-diagonal-defect} yields a sequence $(v_{k,n})_{n\in\mathbb N}\subset D(J)$ such that $v_{k,n}\to z$ in $\mathcal Y$ and $\liminf_{n\to\infty}I(v_{k,n})<\frac{1}{4k}$. There exists $n(k)$ such that $d_{\mathcal Y}(v_{k,n(k)},z)<\frac{1}{k}$ and $I(v_{k,n(k)})<\frac{1}{2k}$. Setting $v_k:=v_{k,n(k)}$, we obtain $v_k\to z$ in $\mathcal Y$ and $I(v_k)\to0$. Therefore $z\in\overline{M}(\mathcal J)$.

It remains to prove the almost-equilibrium characterizations. Suppose that $z\in\overline{M}(\mathcal J)$ and let $(v_h)\subset D(J)$ be a recovery sequence. After passing to a subsequence and relabeling, we may assume that $I(v_h)\leq\varepsilon_h\downarrow0$, and \eqref{SVxmYv} gives $J(v_h;v_h)\leq J(w;v_h)+\varepsilon_h$ for all $w\in {Y}$. Conversely, suppose that $(v_h)\subset D(J)$, $\varepsilon_h\downarrow0$, $v_h\to z$ in $\mathcal Y$, and \eqref{eq:ambient-recovery-almost-equilibrium} holds. By \eqref{SVxmYv}, $0\leq I(v_h)\leq\varepsilon_h\to0$, so Definition~\ref{def:I-relaxed-diagonal-equilibrium} gives $z\in\overline{M}(\mathcal J)$.

Finally, because $v_h\in D(J)\subset {Y}$, $\mathcal J(v_h;v_h)=J(v_h;v_h)$. For $z\in {Y}$, one has $\mathcal J(z;v_h)=J(z;v_h)$, whereas for $z\in\mathcal Y\setminus {Y}$ one has $\mathcal J(z;v_h)=+\infty$. The two almost-equilibrium inequalities are therefore equivalent.
\end{proof}

The characterization reduces existence in the enriched space to a compactness question. In particular, once a vanishing-defect family is relatively compact in $\mathcal Y$, any cluster point is automatically a relaxed equilibrium.

\begin{proposition}[Compactness produces a relaxed equilibrium]
\label{prop:relaxed-equilibrium-compactness}
Suppose that there exist $(v_h)\subset D(J)$ and $\varepsilon_h\downarrow0$ satisfying
\begin{equation}
J(v_h;v_h)
\leq
J(w;v_h)+\varepsilon_h ,
\quad
\forall w\in {Y}.
\end{equation}
If $(v_h)$ has a convergent subsequence in $\mathcal Y$, then every subsequential limit is a relaxed diagonal equilibrium. In particular, suppose that for every $\varepsilon>0$ there exists $v_\varepsilon\in D(J)$ satisfying $I(v_\varepsilon) \leq\varepsilon$, and that for some $a>0$ the set $\left\{ v\in D(J)\,:\,I(v)\leq a \right\}$ is relatively sequentially compact in $\mathcal Y$. Then, $\overline{M}(\mathcal J)$ is nonempty.
\end{proposition}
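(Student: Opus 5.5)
The argument is a direct application of Definition~\ref{def:I-relaxed-diagonal-equilibrium} together with the equivalence \eqref{SVxmYv}; no semicontinuity in the enriched space is needed, because the relaxed set is defined as a closure of pure almost equilibria.

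For the first assertion, take the given sequence $(v_h)\subset D(J)$ with $J(v_h;v_h)\leq J(w;v_h)+\varepsilon_h$ for all $w\in Y$. By \eqref{SVxmYv}, this is the same as $0\leq I(v_h)\leq\varepsilon_h$. Let $(v_{h_k})$ be any subsequence converging in $\mathcal Y$ to some $z$. Along it we still have $I(v_{h_k})\leq\varepsilon_{h_k}\to0$. Hence $(v_{h_k})\subset D(J)$ is a recovery sequence in the sense of Definition~\ref{def:I-relaxed-diagonal-equilibrium}, and $z\in\overline M(\mathcal J)$. Equivalently, by Proposition~\ref{prop:abstract-relaxation-equivalence}, $\overline{\mathcal I}(z)\leq\liminf_k I(v_{h_k})=0$.

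For the second assertion, choose $\varepsilon_h\downarrow0$ with $\varepsilon_h\leq a$ for all $h$; for instance $\varepsilon_h=\min\{a,1/h\}$. The approximability hypothesis gives $v_h:=v_{\varepsilon_h}\in D(J)$ with $I(v_h)\leq\varepsilon_h\leq a$. Thus the whole sequence lies in the set $\{v\in D(J):I(v)\leq a\}$. By relative sequential compactness in $\mathcal Y$, some subsequence converges in $\mathcal Y$ to a point $z$, which need not lie in $Y$. Applying \eqref{SVxmYv} once more shows the sequence satisfies the almost-equilibrium inequalities with errors $\varepsilon_h$. The first part then places $z$ in $\overline M(\mathcal J)$, so this set is nonempty.

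There is no genuine obstacle. The only point needing care is that relative compactness, not compactness, is assumed: the limit is allowed to leave $Y$ and even $D(J)$. This is harmless precisely because Definition~\ref{def:I-relaxed-diagonal-equilibrium} asks only for convergence in $\mathcal Y$ of pure vanishing-defect sequences, and never evaluates $I$ or $J$ at the limit.
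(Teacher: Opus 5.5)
Your proof is correct and follows the paper's argument: you use \eqref{SVxmYv} to turn the almost-equilibrium inequalities into $I(v_h)\leq\varepsilon_h$, so any convergent subsequence is a recovery sequence (the paper cites Proposition~\ref{prop:abstract-relaxation-equivalence}, while you appeal directly to Definition~\ref{def:I-relaxed-diagonal-equilibrium}, which amounts to the same thing). For the second assertion you choose $\varepsilon_h\leq a$ and extract a convergent subsequence from the relatively compact sublevel set, exactly as the paper does, and your observation that the limit may leave $Y$ harmlessly is why relative compactness suffices.
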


\begin{proof}
Along a convergent subsequence, \eqref{SVxmYv} gives $I(v_h)\leq\varepsilon_h\to0$. Proposition~\ref{prop:abstract-relaxation-equivalence} then applies. For the final assertion, choose $\varepsilon_h\downarrow0$ with $\varepsilon_h\leq a$ and corresponding $v_h\in D(J)$ such that $I(v_h)\leq\varepsilon_h$. The stated relative sequential compactness yields a convergent subsequence, and the first part applies.
\end{proof}

\begin{remark}[The infinite extensions are not a relaxed game] \label{JeyRfv}
{\rm For $z\in\mathcal Y\setminus {Y}$, one has $\mathcal J(z;z)=+\infty$ and $\mathcal J(w;z)=+\infty$ for every $w\in\mathcal Y$. The diagonal inequality at $z$ is therefore vacuous and an expression such as $\mathcal J(z;z)-\inf_w\mathcal J(w;z)$ is not defined. The functions $\mathcal J$ and $\mathcal I$ only embed the pure game and its defect into the ambient space; the genuine relaxed quantity is $\overline{\mathcal I}$.}\hfill$\square$
\end{remark}

\begin{remark}[Distinction from evolutionary $\Gamma$-convergence]
{\rm Convergence of rate-independent evolutions under $\Gamma$-convergence of the underlying energies has been studied, among other settings, by Negri \cite{Negri2014}. The relaxation considered here is of a different type. Rather than starting from a sequence of energy functionals and passing to the associated limiting evolution, we fix the pure evolutionary game and close sequences of pure almost equilibria whose diagonal defects vanish. Thus, the object being relaxed is the whole-trajectory equilibrium principle itself, together with the state--rate information retained by the chosen enriched topology.}\hfill$\square$
\end{remark}

\begin{example}[Linear-growth evolutions as a lossless bounded-competitor relaxation]
\label{ex:linear-growth-relaxation}
{\rm As a first example of relaxation, consider the linear-growth energy--dissipation system of Section~\ref{subsec:linear-growth-energy-dissipation}. It may be viewed first on the pure state and rate spaces
\begin{equation}
X:=\bigl\{u\in W^{1,1}(0,T;H):u(0)=u_0\bigr\},\quad Y:=L^1(0,T;H),
\end{equation}
with reconstruction operator $R$ as in (\ref{7TzojC}), and then embedded in the ambient spaces (\ref{eq:linear-measure-spaces}) by identifying $v\in Y$ with the measure $v(t)\,dt$ and reconstructing the state by $u=R\mu$ as in \eqref{eq:linear-measure-reconstruction}. The pure functionals are extended by $+\infty$ outside $X$ and $Y$, as in \eqref{eq:infinite-payoff-extension-formula} and \eqref{eq:infinite-defect-extension}, and convergence in $\mathcal Y$ is weak-star convergence on total-variation-bounded sets. Thus, absolutely continuous pure rates may converge to measures with atoms, whose reconstructed states belong to $\mathcal X$.

However, there is a qualification necessitated by the unbounded competitor space. If the safe-stress condition is violated, competitors can be scaled without bound and the unrestricted diagonal defect is $+\infty$. Consequently, the approximating rates of Proposition~\ref{prop:linear-measure-attainment} need not have vanishing unrestricted defect. The natural relaxation is instead tested on every bounded competitor class. To this end, for $M>0$ we consider the restricted diagonal defects $\mathcal I_M(\mu)$ as in (\ref{eq:linear-bounded-defect}) with representation \eqref{eq:linear-bounded-defect-representation}. If the state-generating rate is pure, $\mu=v(t)\,dt$ with $v\in L^1(0,T;H)$, then $R\mu\in W^{1,1}(0,T;H)$ and, under the $W^{1,1}$ loading hypothesis of Section~\ref{subsec:linear-growth-energy-dissipation}, $g_\mu=S\widehat{R\mu}-h=SR\mu-h$ is continuous on $[0,T]$. For such $\mu$, the supremum in $\mathcal I_M(\mu)$ has the same value if the competitors are restricted to absolutely continuous measures with $L^1$ densities. Indeed, for every $\nu\in\mathcal Y$ with $|\nu|([0,T])\leq M$, there are $\nu_k=v_k(t)\,dt$ such that $\nu_k\stackrel{*}{\rightharpoonup}\nu$ and $\|v_k\|_{L^1}\leq|\nu|([0,T])\leq M$. Since $g_\mu$ is continuous, $\int_{[0,T]}(g_\mu,d\nu_k)_H\to\int_{[0,T]}(g_\mu,d\nu)_H$, and therefore $\limsup_{k\to\infty}\mathcal J(\nu_k;\mu)\leq\mathcal J(\nu;\mu)$. The infimum of $\mathcal J(\,\cdot\,;\mu)$ over bounded absolutely continuous competitors consequently agrees with the infimum over all bounded measure competitors. This conclusion is specific to pure state-generating rates: for a general $\mu\in\mathcal Y$, the centered field $g_\mu=S\widehat{R\mu}-h$ may be discontinuous at atoms, and weak-star approximation of competitors need not preserve the work term.

Proposition~\ref{prop:linear-measure-attainment} supplies a pure sequence $\mu_n=v_n(t)\,dt$ and a measure equilibrium $\mu^*\in\mathcal Y$ such that $\mu_n\stackrel{*}{\rightharpoonup}\mu^*$, $\mathcal J(\mu_n;\mu_n)\to0$, and $\mathcal Q(\mu_n)\to0$. It therefore follows that $\mathcal I_M(\mu_n)\to0$, for every fixed $M>0$. By the preceding equivalence, the same sequence has vanishing defect when the bounded competitors are restricted to pure rates. Hence $\mu^*$ belongs to the weak-star sequential closure of the pure almost-equilibria for every bounded competitor game. In the present $W^{1,1}$-loading setting, the compactness passage also yields $\mu^*=v^*(t)\,dt$, for some $v^*\in L^1(0,T;H)$. The relaxation is therefore lossless: the measure space supplies the compactness needed for existence, whereas the relaxed equilibrium ultimately lies again in the pure space. For rougher loading, as noted in Remark~\ref{ZhGtXu}, the same ambient relaxation may retain atoms or a singular continuous part.

To relate this construction precisely to Propositions~\ref{prop:abstract-relaxation-equivalence} and~\ref{prop:relaxed-equilibrium-compactness}, fix $R>L+L_0$ and set
\begin{equation}
Y_R:=\bigl\{v(t)\,dt:v\in L^1(0,T;H),\ \|v\|_{L^1}\leq R\bigr\},\quad \mathcal Y_R:=\bigl\{\mu\in\mathcal Y:|\mu|([0,T])\leq R\bigr\}.
\end{equation}
The weak-star topology on $\mathcal Y_R$ is metrizable and sequentially compact. For $\mu\in Y_R$, the pure bounded-competitor defect agrees, by the preceding argument, with the bounded measure-competitor defect and therefore is
\begin{equation}
I_R(\mu)=\mathcal J(\mu;\mu)+R\mathcal Q(\mu),\quad \mu\in Y_R.
\end{equation}
The sequence supplied by Proposition~\ref{prop:linear-measure-attainment} belongs eventually to $Y_R$ and satisfies $\mu_n\stackrel{*}{\rightharpoonup}\mu^*$ and $I_R(\mu_n)\to0$. Proposition~\ref{prop:relaxed-equilibrium-compactness} therefore shows that $\mu^*$ is a relaxed equilibrium of the $R$-bounded game, while Proposition~\ref{prop:abstract-relaxation-equivalence} gives the equivalent characterization $\overline{\mathcal I}_R(\mu^*)=0$. Thus, Proposition~\ref{prop:linear-measure-attainment} realizes concretely the compactness and recovery-sequence mechanism of Propositions~\ref{prop:abstract-relaxation-equivalence} and~\ref{prop:relaxed-equilibrium-compactness}.

} \hfill$\square$
\end{example}

\section{Example of exact relaxation: Non-convex kinetics with local memory}
\label{sec:local-memory-banding}

We now give an example in which the relaxation mechanism of Section~\ref{sec:direct-almost-equilibrium-relaxation} can be computed completely. The model is a quasistatic simple-shear problem with a non-convex local kinetic potential and a local internal variable carrying memory. The spatial model is deliberately unregularized: no interfacial energy, rate-gradient penalty, or diffusion of the memory variable is included. Consequently, the theory does not select a band width or propagation speed. Instead, it isolates the closure problem caused by non-convex kinetics when different microscopic rate populations acquire different internal histories, and it shows that the correct relaxed object is a Young measure on complete local histories rather than an instantaneous convexification of the rate potential.

The example is relevant for several reasons. First, it is genuinely evolutionary: the kinetic response at time $t$ depends on an internal variable generated by the preceding rate history. Second, the spatial recovery problem is elementary enough to be solved exactly, because the microscopic rate is not subject to a state-compatibility obstruction such as encountered, e.g., in multi-slip plasticity \cite{ContiOrtiz2005}. Third, the resulting enriched space gives a concrete instance of the abstract construction of Section~\ref{sec:direct-almost-equilibrium-relaxation}: the sequential lower-semicontinuous envelope of the pure diagonal defect is identified explicitly, every relaxed zero admits a pure vanishing-defect recovery sequence, and relaxed equilibria exist by a direct frozen-best-response construction.

The constitutive assumptions below are intentionally broad. They include smooth double-well kinetic potentials, non-monotone flow curves, and memory laws motivated by dynamic strain aging, thermal softening and damage. The Portevin--Le Ch\^atelier effect provides a particularly natural physical connection: negative strain-rate sensitivity, local aging variables, and propagating or intermittently localized deformation bands are central features of that phenomenon \cite{Yilmaz2011,McCormick1988,Ananthakrishna2007}. The present model is not intended as a detailed theory of PLC band propagation. Rather, it enables an exact relaxation theorem for the local-memory kinetic mechanism characteristic of such models.

\subsection{Simple-shear reduction and local-memory kinetics}
\label{subsec:memory-antiplane-reduction}

Let the transverse thickness be $H>0$ and set $\Omega=(0,H)$. The simple-shear displacement is $u:\Omega\times[0,T]\to\mathbb R$, the plastic shear is $\gamma:\Omega\times[0,T]\to\mathbb R$, and the prescribed end displacement is $\bar u\in W^{1,\infty}(0,T)$. We impose $u(0,t)=0$ and $u(H,t)=\bar u(t)$, and denote the corresponding macroscopic shear by ${\bar\xi}(t):=\bar u(t)/H$. For fixed $\gamma$ and $t$, the elastic energy is
\begin{equation}
\mathcal E(u,\gamma,t):=\frac{\mu_0}{2}\int_0^H |u_x(x)-\gamma(x)|^2\,dx,
\quad \mu_0>0.
\end{equation}
The displacement can be eliminated exactly.

\begin{lemma}[Simple-shear elastic reduction]
\label{lem:memory-elastic-reduction}
For every $\gamma\in L^2(0,H)$ and $t\in[0,T]$,
\begin{equation}
E(\gamma,t):=\inf_{\substack{u\in H^1(0,H)\\u(0)=0,\ u(H)=\bar u(t)}}\mathcal E(u,\gamma,t)
=\frac{\mu_0 H}{2}\left({\bar\xi}(t)-\bar\gamma\right)^2,
\quad
\bar\gamma:=\fint_0^H\gamma(x)\,dx.
\end{equation}
The minimizing elastic stress is spatially constant and is $\tau(\gamma,t)=\mu_0\left({\bar\xi}(t)-\bar\gamma\right)$. Moreover, for every $z\in L^2(0,H)$, 
\begin{equation}
D_1 E(\gamma,t)[z]=-H\tau(\gamma,t)\fint_0^H z(x)\,dx .
\end{equation}
\end{lemma}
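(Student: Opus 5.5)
The plan is to decompose $u_x=\gamma+e$, which turns the minimization over displacements into a minimization over a mean-constrained strain, and then to compute the infimum and the derivative directly.

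\emph{Step 1: reformulation.} For admissible $u\in H^1(0,H)$ with $u(0)=0$ and $u(H)=\bar u(t)$, set $e:=u_x-\gamma\in L^2(0,H)$. The boundary conditions are equivalent to
\begin{equation}
\int_0^H u_x\,dx=\bar u(t),
\end{equation}
that is, $\fint_0^H e\,dx={\bar\xi}(t)-\bar\gamma$. Conversely, every $e\in L^2$ with this mean produces an admissible $u(x):=\int_0^x(\gamma+e)$. The problem therefore reduces to minimizing $\frac{\mu_0}{2}\|e\|_{L^2}^2$ over $e\in L^2$ with prescribed mean $c:={\bar\xi}(t)-\bar\gamma$.

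\emph{Step 2: the infimum.} By Jensen's (or Cauchy--Schwarz's) inequality, $\int_0^H e^2\ge H c^2$, with equality if and only if $e\equiv c$. The minimizer is then $u(x)=\int_0^x\gamma+cx$, which lies in $H^1$ because $\gamma\in L^2$. This gives the formula for $E$. The elastic stress $\mu_0 e=\mu_0 c=\tau(\gamma,t)$ is constant in $x$, consistent with equilibrium $(\mu_0(u_x-\gamma))_x=0$.

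\emph{Step 3: the derivative.} $E(\cdot,t)$ is a smooth function of the bounded linear functional $\gamma\mapsto\bar\gamma$ on $L^2$. Applying the chain rule to $\frac{\mu_0 H}{2}({\bar\xi}-\bar\gamma)^2$ gives
\begin{equation}
D_1E(\gamma,t)[z]=-\mu_0H({\bar\xi}-\bar\gamma)\fint_0^H z\,dx=-H\tau(\gamma,t)\fint_0^H z\,dx.
\end{equation}

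The only point requiring care is the equivalence of admissible $u$ with mean-constrained $e$ in Step 1, which uses absolute continuity of $H^1$ functions in one dimension. Beyond that, the argument is elementary and I expect no real obstacle.
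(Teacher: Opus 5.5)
Your proof is correct, and it takes a slightly different route from the paper's. The paper starts from the Euler equation $\mu_0(u_x-\gamma)=\bar\tau$ with $\bar\tau$ independent of $x$. It then averages this over $(0,H)$, using the boundary values, to identify $\bar\tau=\mu_0(\bar\xi-\bar\gamma)$, and substitutes. That argument is short, but it tacitly relies on convexity of $u\mapsto\mathcal E(u,\gamma,t)$, or on the existence of a minimizer, to pass from stationarity to minimality.

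You instead turn the boundary conditions into a mean constraint on $e=u_x-\gamma$ and apply Jensen's inequality. In one step this gives the lower bound, its attainment, and uniqueness of the minimizing strain, with no appeal to first-order conditions. The only extra ingredient is the one-dimensional identity $u(H)-u(0)=\int_0^H u_x\,dx$ for $H^1$ functions, which you correctly identify as the point needing care. The derivative computation is the same chain-rule step in both proofs.
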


\begin{proof}
The Euler equation for the displacement gives $\mu_0(u_x-\gamma)={\bar\tau}$, with ${\bar\tau}$ independent of $x$. Averaging over $(0,H)$ and using the boundary values yields ${\bar\xi}=\bar\gamma+{\bar\tau}/\mu_0$, hence the formula for ${\bar\tau}$. Substitution gives the reduced energy, and differentiation in $\gamma$ gives the last identity.
\end{proof}

Let $A=\mathbb R^m$ be the memory space and let ${L}=[-l_0,l_0]$ be a compact admissible rate interval, with $l_0>0$. The compact rate interval is introduced to keep the relaxation theorem transparent. A superlinear kinetic potential on $\mathbb R$ may be treated by truncation on defect sublevels, because bounded loading and superlinear growth bound all local minimizers on the relevant stress range.

The memory variable function $a:\Omega\times[0,T]\to A$ evolves according to the local kinetic relation $\dot a(x,t)=f\bigr(a(x,t),{l}(x,t)\bigl)$, where ${l}=\dot\gamma$ is the plastic shear rate. We assume throughout that $f:A\times {L}\to A$ is continuous, globally Lipschitz in $a$ uniformly in $r\in {L}$, and satisfies 
\begin{equation} \label{CU22TD}
|f(a,r)|\leq C_f(1+|a|),\quad \forall (a,r)\in A\times {L}. 
\end{equation}
The kinetic potential $\psi:{L}\times A\to[0,+\infty)$ is continuous. No convexity of the mapping $r\mapsto\psi(r;a)$ is required. We assumed homogeneous initial data $\gamma_0\in\mathbb R$ and $a_0\in A$ for simplicity. Spatially varying bounded initial data can be included by retaining the initial value as an additional coordinate of the history space below.

For ${l}\in L^\infty(\Omega\times(0,T))$ with ${l}(x,t)\in {L}$ a.e., define the causal reconstructions 
\begin{equation}
\gamma_{l}(x,t):=\gamma_0+\int_0^t {l}(x,s)\,ds ,
\quad
a_{l}(x,t):=a_0+\int_0^t f(a_{l}(x,s),{l}(x,s))\,ds .
\end{equation}
Their spatially averaged plastic shear and the corresponding stress are
\begin{equation}
\bar\gamma_l(t):=\fint_0^H\gamma_l(x,t)\,dx,
\quad
{\bar\tau}_{l}(t):=\mu_0\left({\bar\xi}(t)-\bar\gamma_l(t)\right),
\label{eq:memory-stress-reconstruction}
\end{equation}
respectively. The first reconstruction is Volterra, while the second is a nonlinear causal response. Gronwall's inequality gives a constant $C_a$, depending only on $T$, $a_0$, and $C_f$, such that $\|a_{l}\|_{L^\infty(\Omega\times(0,T))}\leq C_a$ for every admissible ${l}$. Likewise, $\gamma_{l}$ and ${\bar\tau}_{l}$ are uniformly bounded. Differentiating the reduced elastic energy along an absolutely continuous path gives
\begin{equation}
\frac{d}{dt}E(\gamma_{l}(t),t)
=H{\bar\tau}_{l}(t)\left(\dot{{\bar\xi}}(t)-\fint_0^H {l}(x,t)\,dx\right)
\end{equation}
for a.e.~$t$. Thus the stress work against the plastic rate is exactly the state derivative appearing in the energy--dissipation rate problem.

\subsection{The pure diagonal game and its defect}
\label{subsec:memory-pure-game}

For $|a|\leq C_a$ and for stresses in the bounded range generated by \eqref{eq:memory-stress-reconstruction}, define the local minimum value $m(a,{b}):=\min_{r\in {L}}\{\psi(r;a)-{b} r\}$ and the local excess by
\begin{equation}
 g(r;a,{b}):=\psi(r;a)-{b} r-m(a,{b})\geq0.
\label{eq:memory-local-gap}
\end{equation}
The minimizing set is then
\begin{equation}
{M}(a,{b}):=\mathop{\rm argmin}_{r\in {L}}\bigl\{\psi(r;a)-{b} r\bigr\}.
\label{eq:memory-best-response-set}
\end{equation}
Compactness of ${L}$ and continuity of $\psi$ imply that ${M}(a,{b})$ is nonempty and compact. The functions $m$ and $g$ are continuous on every bounded $(a,{b})$ set.

Let $Y:=\{l\in L^\infty(\Omega\times(0,T)):l(x,t)\in {L}\text{ a.e.}\}$. For $w,l\in Y$, define the normalized payoff
\begin{equation}
J(w;l):=\int_0^T\int_0^H g\bigl(w(x,t);a_{l}(x,t),{\bar\tau}_{l}(t)\bigr)\,dx\,dt.
\label{eq:memory-pure-payoff}
\end{equation}
The first argument is the competitor rate, while the second generates both the mechanical state and the local memory. Up to subtraction of the state-dependent minimum $m$, \eqref{eq:memory-pure-payoff} is the integral of $\psi(w;a_{l})+D_1 E(\gamma_{l},t)[w]$. Thus the normalization does not change the best-response relation.

\begin{proposition}[Pure defect and local best response]
\label{prop:memory-pure-defect}
For every $l\in Y$, the diagonal defect of Section~\ref{sec:direct-almost-equilibrium-relaxation} is
\begin{equation}
I(l)=J(l;l)=\int_0^T\int_0^H g\bigl(l(x,t);a_{l}(x,t),{\bar\tau}_{l}(t)\bigr)\,dx\,dt.
\label{eq:memory-pure-defect}
\end{equation}
Consequently, $l$ is a pure diagonal equilibrium if and only if $l(x,t)\in{M}(a_{l}(x,t),{\bar\tau}_{l}(t))$ for a.e.~$(x,t)\in\Omega\times(0,T)$.
\end{proposition}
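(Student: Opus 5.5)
The argument starts from the definition \eqref{eq:relaxation-diagonal-defect}. We first check that $D(J)=Y$. For $l\in Y$, the reconstructions give $|a_l|\leq C_a$, and $\bar\tau_l$ lies in a fixed bounded range. On this bounded set $g$ is continuous and hence bounded. The integrand $(x,t)\mapsto g(w(x,t);a_l(x,t),\bar\tau_l(t))$ is measurable as a composition of a continuous function with measurable maps. It follows that $J(w;l)$ is finite and nonnegative for every pair $w,l\in Y$. In particular $J(l;l)<+\infty$, so every $l\in Y$ belongs to $D(J)$.

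The key step is to show that the best-response value $m(l)=\inf_{w\in Y}J(w;l)$ equals zero. Since $g\geq0$, we have $m(l)\geq0$. For the reverse inequality we need a measurable selection $w^*(x,t)\in M(a_l(x,t),\bar\tau_l(t))$. The multifunction $(a,b)\mapsto M(a,b)$ has nonempty compact values and is upper semicontinuous, because $\psi$ and $m$ are continuous and $L$ is compact. Its graph is therefore closed, and the Kuratowski--Ryll-Nardzewski theorem (or, more simply, taking $\min M(a,b)$, which is lower semicontinuous and hence Borel) gives a Borel selector $s(a,b)$. Composing with the measurable map $(x,t)\mapsto(a_l(x,t),\bar\tau_l(t))$ yields $w^*:=s(a_l,\bar\tau_l)\in Y$. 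This competitor has zero integrand, so $J(w^*;l)=0$ and $m(l)=0$. The measurable selection is the only point needing care; everything else is bookkeeping. Then \eqref{eq:relaxation-diagonal-defect} gives $I(l)=J(l;l)-0=J(l;l)$, which is \eqref{eq:memory-pure-defect}.

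The characterization follows from \eqref{SVxmYv} with $\varepsilon=0$: $l$ is a pure diagonal equilibrium exactly when $I(l)=0$. Because the integrand $g(l;a_l,\bar\tau_l)$ is nonnegative, this holds if and only if $g(l(x,t);a_l(x,t),\bar\tau_l(t))=0$ for a.e.\ $(x,t)$. By \eqref{eq:memory-local-gap} and \eqref{eq:memory-best-response-set}, this means precisely that $l(x,t)\in M(a_l(x,t),\bar\tau_l(t))$ almost everywhere.
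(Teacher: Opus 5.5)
Your proof is correct and follows the paper's argument. Nonnegativity of $g$ gives $J(\cdot;l)\geq0$, a measurable selection of $M(a_l,\bar\tau_l)$ yields a competitor with $J(w^*;l)=0$, hence $I(l)=J(l;l)$, and vanishing of the nonnegative integrand characterizes the zeros; the only differences are cosmetic. You build the selection explicitly from the lower semicontinuous (hence Borel) selector $(a,b)\mapsto\min M(a,b)$ composed with the measurable state map, whereas the paper cites a general measurable-selection theorem for the $(x,t)$-multifunction, and you also verify $D(J)=Y$, which the paper leaves implicit.
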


\begin{proof}
For fixed $l$, the local gap is nonnegative, so $J(w;l)\geq0$ for every competitor. The multifunction $(x,t)\mapsto{M}(a_{l}(x,t),{\bar\tau}_{l}(t))$ has nonempty compact values and a measurable graph. A measurable selection $w^*$ therefore exists \cite[Thm.~18.19]{AliprantisBorder2006}, with $w^*(x,t)\in{M}(a_{l}(x,t),{\bar\tau}_{l}(t))$ a.e. Hence $w^*\in Y$ and $J(w^*;l)=0$. It follows from the definition of the diagonal defect that $I(l)=J(l;l)$. Since the integrand in \eqref{eq:memory-pure-defect} is nonnegative, $I(l)=0$ is equivalent to the asserted local best-response condition.
\end{proof}

\begin{remark}[Why pointwise convexification is not enough]
\label{rem:memory-static-convexification}
{\rm If the memory variable were absent, relaxation of the local non-convex rate problem would be described at a fixed state by the lower convex envelope of $r\mapsto\psi(r)-{\bar\tau} r$. With memory, replacing a mixture of rates by its barycenter generally destroys future information. Suppose, for example, that at time $t_0$ a population with common memory $a_0$ splits into rates $r_-$ and $r_+$ with fractions $\theta$ and $1-\theta$. Then, over a short interval $h$, $a_\pm(t_0+h)=a_0+h f(a_0,r_\pm)+o(h)$. The two populations therefore acquire different memories whenever $f(a_0,r_-)\neq f(a_0,r_+)$. Replacing them by the average rate $\bar r=\theta r_-+(1-\theta)r_+$ gives instead the single memory increment $h f(a_0,\bar r)$, which need not equal the mixture of the two increments. Even when the first increments agree serendipitously, the future kinetic potential depends on the distribution of the memories, not merely on their mean. The relaxation must therefore retain temporal coupling information.} \hfill$\square$
\end{remark}

\subsection{Relaxed local histories and the enriched space}
\label{subsec:memory-history-space}

The natural compactification of one local rate history is the relaxed-control compactification, see e.g.~\cite{Warga1972}. A relaxed control is a positive Radon measure $\lambda$ on $[0,T]\times {L}$ whose first marginal is Lebesgue measure. We write its disintegration as $d\lambda(t,r)=d\lambda_t(r)\,dt$, with $\lambda_t\in\mathcal P({L})$ for a.e.~$t \in (0,T)$. Here and subsequently, $\mathcal{P}(L)$ denotes the space of Borel probability measures on $L$. A relaxed local history is a triple $\omega=(\gamma,a,\lambda)$ satisfying
\begin{equation}
\gamma(t)=\gamma_0+\int_0^t\int_{L} r\,d\lambda_s(r)\,ds,
\label{eq:memory-relaxed-gamma}
\end{equation}
\begin{equation}
a(t)=a_0+\int_0^t\int_{L} f(a(s),r)\,d\lambda_s(r)\,ds.
\label{eq:memory-relaxed-a}
\end{equation}
Let $\mathcal X$ denote the set of all such histories. We equip $\mathcal X$ with uniform convergence of $\gamma$ and $a$, together with narrow convergence of $\lambda$ as a measure on the compact set $[0,T]\times {L}$. A history is called pure if $\lambda_t=\delta_{r(t)}$ for a.e.~$t$ for some measurable $r:[0,T]\to {L}$.

\begin{lemma}[Compactness and chattering of local histories]
\label{lem:memory-local-history-compactness}
The space $\mathcal X$ is compact and metrizable. Pure histories are dense in $\mathcal X$. More precisely, for every $\omega=(\gamma,a,\lambda)\in\mathcal X$ there exist pure histories $\omega_h=(\gamma_h,a_h,\delta_{r_h(t)})$ such that $\gamma_h\to\gamma$ and $a_h\to a$ uniformly on $[0,T]$, and $\delta_{r_h(t)}\,dt\rightharpoonup\lambda$ narrowly in $\mathcal M([0,T]\times {L})$. If $c:[0,T]\times A\times {L}\to\mathbb R$ is continuous on the bounded state range generated above, the sequence may be chosen so that
\begin{equation}
\int_0^T c(t,a_h(t),r_h(t))\,dt
\to
\int_0^T\int_{L} c(t,a(t),r)\,d\lambda_t(r)\,dt.
\label{eq:memory-local-chattering-cost}
\end{equation}
\end{lemma}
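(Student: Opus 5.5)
The plan is to treat the three claims in order: compactness and metrizability, density of pure histories, and convergence of continuous costs. Compactness and metrizability will come from embedding $\mathcal X$ as a closed subset of a product of compact metric spaces. The space of relaxed controls $\mathcal R:=\{\lambda\in\mathcal M_+([0,T]\times L):\ (\pi_1)_\#\lambda=dt\}$ is narrowly closed in the compact, metrizable set of positive measures of mass $T$ on the compact set $[0,T]\times L$, so it is compact metrizable by Prokhorov. For histories, \eqref{eq:memory-relaxed-gamma}, \eqref{CU22TD} and Gronwall give uniform bounds: $|\gamma|\le|\gamma_0|+l_0T$ and $|a|\le C_a$. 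They also give uniform Lipschitz constants, $l_0$ for $\gamma$ and $C_f(1+C_a)$ for $a$. Arzel\`a--Ascoli then places the $(\gamma,a)$ components in a compact subset of $C([0,T];\mathbb R\times A)$. It remains to show that the constraints \eqref{eq:memory-relaxed-gamma}--\eqref{eq:memory-relaxed-a} are closed under the product convergence. Suppose $\omega_k=(\gamma_k,a_k,\lambda_k)\to(\gamma,a,\lambda)$ and fix $t$. The difference $\int_0^t\!\int f(a_k,r)\,d\lambda_k-\int_0^t\!\int f(a,r)\,d\lambda$ splits into two terms:
\[
\int_0^t\!\int\bigl(f(a_k(s),r)-f(a(s),r)\bigr)\,d\lambda_k ,
\qquad
\int\mathbf 1_{[0,t]}(s)\,f(a(s),r)\,d(\lambda_k-\lambda).
\]
The first term is bounded by $\mathrm{Lip}\cdot T\|a_k-a\|_\infty$. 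The second tends to zero because $(s,r)\mapsto f(a(s),r)$ is continuous and bounded. The indicator's discontinuity at $s=t$ is harmless, since $\{t\}\times L$ is $\lambda$-null, its first marginal being Lebesgue measure. The $\gamma$ equation is handled the same way. Hence $\mathcal X$ is closed, and therefore compact and metrizable.

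For density I would use a chattering construction. First note that the state is a function of the control. Given $\lambda$, equation \eqref{eq:memory-relaxed-a} has a unique solution, because $f$ is Lipschitz in $a$ uniformly in $r$. Moreover, the map $\lambda\mapsto(\gamma,a)$ is continuous from $\mathcal R$ into $C([0,T])$. This follows from the closedness argument above combined with uniqueness and compactness: every subsequence has a further subsequence converging to a solution, which must be the unique one. It therefore suffices to approximate $\lambda$ narrowly by pure controls $\delta_{r_h(t)}\,dt$, since the histories $\gamma_h,a_h$ then converge uniformly automatically.

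The construction of $r_h$ goes in two steps. Partition $[0,T]$ into intervals $I_j$ of length $T/h$ and replace $\lambda$ on $I_j$ by the time-averaged measure $\bar\lambda_j:=\frac hT\int_{I_j}\lambda_s\,ds\in\mathcal P(L)$. Then approximate $\bar\lambda_j$ by an atomic measure $\sum_k\theta_{jk}\delta_{r_{jk}}$ built on a $1/h$-grid of $L$. Finally, split $I_j$ into consecutive subintervals of lengths $\theta_{jk}|I_j|$ and set $r_h\equiv r_{jk}$ on each. To check narrow convergence, test against $\varphi\in C([0,T]\times L)$. The error is bounded by the modulus of continuity of $\varphi$ at scale $T/h+1/h$, times $T$. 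The reason is that on each $I_j$ both measures have the same $L$-marginal up to grid error, and the time variation inside $I_j$ is at most $T/h$.

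The last claim is the convergence of continuous costs \eqref{eq:memory-local-chattering-cost}. Write
\[
\int_0^T c(t,a_h,r_h)\,dt-\int c(t,a,r)\,d\lambda
=\int\bigl(c(t,a_h,r)-c(t,a,r)\bigr)\,d\lambda_h+\int c(t,a(t),r)\,d(\lambda_h-\lambda),
\]
where $\lambda_h=\delta_{r_h(t)}\,dt$. The first term vanishes by uniform continuity of $c$ on the compact range $[0,T]\times\{|a|\le C_a\}\times L$ together with $a_h\to a$ uniformly. The second vanishes by narrow convergence, since $(t,r)\mapsto c(t,a(t),r)$ is continuous. So every sequence from the density step already satisfies \eqref{eq:memory-local-chattering-cost}, and no special choice is needed.

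I expect the main obstacle to be the continuity of the solution map $\lambda\mapsto a$ under merely narrow convergence of controls. This is where the Lipschitz hypothesis, uniqueness, and the null-boundary observation for $\{t\}\times L$ are all used. Everything else is routine compactness bookkeeping.
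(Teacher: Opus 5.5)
Your proposal is correct and follows the paper's route. For compactness you use narrow compactness of relaxed controls together with Gronwall, Arzel\`a--Ascoli, and closedness of the integral equations under the product convergence. For density you use the standard chattering construction (time-cell averaging, atomic approximation, subdivision) followed by continuous dependence on the control.

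Your treatment of two steps is more explicit than the paper's, without being a different method. You justify continuous dependence through uniqueness plus subsequence compactness. You also obtain \eqref{eq:memory-local-chattering-cost} for every such sequence directly from uniform convergence of $a_h$ and narrow convergence, rather than re-running the construction on a finite family of functions approximating $c$.
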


\begin{proof}
Because ${L}$ is compact, the set of relaxed controls is narrowly compact and the corresponding $\gamma$ paths are uniformly Lipschitz, see e.g.~\cite{AmbrosioGigliSavare2008}. Gronwall's inequality and the growth assumption (\ref{CU22TD}) on $f$ give a common bound on $a$, after which the continuity of $f$ gives a common Lipschitz bound on the $a$ paths. Arzel\`a--Ascoli and narrow compactness yield a convergent subsequence. Passing to the limit in \eqref{eq:memory-relaxed-gamma}--\eqref{eq:memory-relaxed-a} is justified by uniform convergence of $a$, continuity of $f$, and narrow convergence of the controls, so $\mathcal X$ is closed and therefore compact.

For density, first approximate $\lambda$ on a finite time partition by controls that are constant as probability measures on each interval, and approximate each of those probability measures by a finite atomic measure on ${L}$. On every time cell, divide the cell into subintervals having the corresponding atomic proportions and let $r_h$ take the associated atomic values. The resulting Dirac controls converge narrowly to $\lambda$. Continuous dependence of the integral equations on the relaxed control gives uniform convergence of $\gamma_h$ and $a_h$. Applying the same construction to the finite family of continuous functions needed to approximate $c$ uniformly gives \eqref{eq:memory-local-chattering-cost}. This is the usual chattering construction, see e.g.~\cite{Warga1972}, written here at the level needed for the defect.
\end{proof}

To retain spatial microstructure and band-specific histories, define the enriched rate-path space
\begin{equation}
\mathcal Y:=\bigl\{\Lambda\in\mathcal M([0,H]\times\mathcal X):\Lambda\geq0,\ (\pi_1)_\#\Lambda=\mathcal L^1\mres[0,H]\bigr\},
\label{eq:memory-enriched-space}
\end{equation}
where $\pi_1$ is projection onto the spatial coordinate. Every measure $\Lambda\in\mathcal Y$ has total mass $H$. Since $[0,H]\times\mathcal X$ is compact, $\mathcal Y$ is compact and metrizable for narrow convergence.

Every pure rate $l\in Y$ generates a pure local history $\omega_{l}(x)=(\gamma_{l}(x,\cdot),a_{l}(x,\cdot),\delta_{l(x,t)})$ for a.e.~$x$. We identify $l$ with its graph history measure
\begin{equation}
\Lambda_{l}:=(x\mapsto(x,\omega_{l}(x)))_\#\mathcal L^1\mres[0,H]\in\mathcal Y.
\label{eq:memory-pure-graph-measure}
\end{equation}
The spatial coordinate makes this embedding injective modulo equality a.e. A general measure $\Lambda\in\mathcal Y$ disintegrates as $d\Lambda(x,\omega) = d\Lambda_x(\omega)\,dx$. Non-Dirac measures $\Lambda_x$ record microscopic populations of complete histories near the macroscopic point $x$.

For $\Lambda\in\mathcal Y$, define its averaged plastic shear and reconstructed stress as
\begin{equation}
\bar\gamma_\Lambda(t):=\frac{1}{H}\int_{[0,H]\times\mathcal X}\gamma(t)\,d\Lambda(x,\omega),
\quad
{\bar\tau}_\Lambda(t):=\mu_0\bigl({\bar\xi}(t)-\bar\gamma_\Lambda(t)\bigr).
\label{eq:memory-relaxed-stress}
\end{equation}
The candidate relaxed defect is
\begin{equation}
\mathscr I(\Lambda)
:=
\int_{[0,H]\times\mathcal X}
\int_0^T\int_{L}
 g\bigl(r;a(t),{\bar\tau}_\Lambda(t)\bigr)
\,d\lambda_t(r)\,dt\,d\Lambda(x,\omega).
\label{eq:memory-explicit-relaxed-defect}
\end{equation}
It is immediate that $\mathscr I\geq0$.

\begin{lemma}[Continuity of the explicit relaxed defect]
\label{lem:memory-relaxed-defect-continuity}
If $\Lambda_h\rightharpoonup\Lambda$ narrowly in $\mathcal Y$, then ${\bar\tau}_{\Lambda_h}\to{\bar\tau}_\Lambda$ uniformly on $[0,T]$ and $\mathscr I(\Lambda_h)\to\mathscr I(\Lambda)$.
\end{lemma}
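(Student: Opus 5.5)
The plan has two steps: first show that the stress converges uniformly, then rewrite $\mathscr I(\Lambda)$ as the integral of a single test function against $\Lambda$ and use narrow convergence.

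\emph{Stress.} The map $(x,\omega)\mapsto\gamma(t)$ is continuous on the compact set $[0,H]\times\mathcal X$, because convergence in $\mathcal X$ includes uniform convergence of $\gamma$. Narrow convergence $\Lambda_h\rightharpoonup\Lambda$ therefore gives $\bar\gamma_{\Lambda_h}(t)\to\bar\gamma_\Lambda(t)$ for each fixed $t$. To upgrade this to uniform convergence, I use that every $\gamma$ in $\mathcal X$ is $l_0$-Lipschitz by \eqref{eq:memory-relaxed-gamma}. Hence all the averages $\bar\gamma_{\Lambda_h}$ are $l_0$-Lipschitz, and they are bounded, so the family is equicontinuous. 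Pointwise convergence of an equicontinuous family on the compact interval $[0,T]$ is uniform. Since ${\bar\tau}_\Lambda$ is an affine function of $\bar\gamma_\Lambda$ with the same ${\bar\xi}$, the uniform convergence of ${\bar\tau}_{\Lambda_h}$ follows. All the stresses involved, including the limit, lie in one bounded interval $B$.

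\emph{Defect.} For a continuous $\sigma:[0,T]\to B$, define
\[
\Phi_\sigma(\omega):=\int_0^T\int_L g\bigl(r;a(t),\sigma(t)\bigr)\,d\lambda_t(r)\,dt=\int_{[0,T]\times L} g\bigl(r;a(t),\sigma(t)\bigr)\,d\lambda(t,r).
\]
Then $\mathscr I(\Lambda)=\int\Phi_{{\bar\tau}_\Lambda}\,d\Lambda$. The key claim is that $(\omega,\sigma)\mapsto\Phi_\sigma(\omega)$ is jointly continuous for uniform convergence of $\sigma$, and hence uniformly continuous on the compact set $\mathcal X\times\{\text{the relevant stresses}\}$. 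To prove it, let $\omega_k\to\omega$ and $\sigma_k\to\sigma$ uniformly. Then $a_k\to a$ uniformly. The function $g$ is uniformly continuous on the compact set $L\times\{|a|\le C_a\}\times B$. Therefore
\[
(t,r)\mapsto g\bigl(r;a_k(t),\sigma_k(t)\bigr)\to g\bigl(r;a(t),\sigma(t)\bigr)
\]
uniformly on $[0,T]\times L$. Combining this uniform convergence of the integrands with the narrow convergence $\lambda_k\rightharpoonup\lambda$ gives $\Phi_{\sigma_k}(\omega_k)\to\Phi_\sigma(\omega)$.

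\emph{Conclusion.} Split the difference as
\[
\mathscr I(\Lambda_h)-\mathscr I(\Lambda)=\int\bigl(\Phi_{{\bar\tau}_{\Lambda_h}}-\Phi_{{\bar\tau}_\Lambda}\bigr)\,d\Lambda_h+\Bigl(\int\Phi_{{\bar\tau}_\Lambda}\,d\Lambda_h-\int\Phi_{{\bar\tau}_\Lambda}\,d\Lambda\Bigr).
\]
\begin{itemize}
\item The first term is bounded by $H\sup_\omega|\Phi_{{\bar\tau}_{\Lambda_h}}(\omega)-\Phi_{{\bar\tau}_\Lambda}(\omega)|$. This tends to zero by the uniform continuity of $\Phi$ on the compact set together with ${\bar\tau}_{\Lambda_h}\to{\bar\tau}_\Lambda$ uniformly.
\item The second term tends to zero by narrow convergence, because $\Phi_{{\bar\tau}_\Lambda}$ is continuous and bounded on $[0,H]\times\mathcal X$ (it does not depend on $x$).
\end{itemize}

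The main obstacle is the joint continuity of $\Phi$. It requires care because the integrand depends on the history through $a(t)$ while the measure $\lambda$ varies at the same time. Compactness of $L$, the a priori bound $C_a$ on the memory, and the bounded stress range are what make the uniform-integrand argument work.
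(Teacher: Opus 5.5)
Your proof is correct and follows essentially the same route as the paper. You get pointwise convergence of $\bar\gamma_{\Lambda_h}$ from narrow convergence and upgrade it to uniform convergence via the common $l_0$-Lipschitz bound. You then write $\mathscr I(\Lambda)$ as the integral of the continuous history functional ($F_{\bar\tau}$ in the paper, your $\Phi_\sigma$) against $\Lambda$, and combine its uniform convergence on $\mathcal X$ under uniform stress convergence with narrow convergence. Your joint-continuity argument and explicit two-term splitting fill in details the paper only asserts.
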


\begin{proof}
For every fixed $t$, the evaluation $\omega\mapsto\gamma(t)$ is continuous on $\mathcal X$, so narrow convergence gives $\bar\gamma_{\Lambda_h}(t)\to\bar\gamma_\Lambda(t)$. All averaged shear paths are $l_0$-Lipschitz. Pointwise convergence together with this common modulus gives uniform convergence, and therefore uniform convergence of the stresses.

For a continuous stress path ${\bar\tau}$, define 
\begin{equation}
F_{\bar\tau}(\omega):=\int_0^T\int_{L} g(r;a(t),{\bar\tau}(t))\,d\lambda_t(r)\,dt .
\end{equation}
The topology of $\mathcal X$, continuity of $g$, and compactness of the common state range imply that $F_{\bar\tau}$ is continuous on $\mathcal X$. Moreover, $F_{{\bar\tau}_h}\to F_{\bar\tau}$ uniformly on $\mathcal X$ whenever ${\bar\tau}_h\to{\bar\tau}$ uniformly. Hence,
\begin{equation}
\mathscr I(\Lambda_h)=\int F_{{\bar\tau}_{\Lambda_h}}\,d\Lambda_h
\to
\int F_{{\bar\tau}_\Lambda}\,d\Lambda
=\mathscr I(\Lambda) ,
\end{equation}
as claimed.
\end{proof}

For pure graph measures the explicit relaxed defect agrees exactly with the original diagonal defect.

\begin{lemma}[Consistency on pure histories]
\label{lem:memory-pure-relaxed-consistency}
For every $l\in Y$, $\mathscr I(\Lambda_{l})=I(l)$.
\end{lemma}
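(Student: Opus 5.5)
The plan is to unwind the definitions of $\Lambda_{l}$, $\bar\tau_\Lambda$ and $\mathscr I$ on a graph measure. Then we compare the result with the formula for $I(l)$ in Proposition~\ref{prop:memory-pure-defect}. Because $\Lambda_l$ is the push-forward of $\mathcal L^1\mres[0,H]$ under $x\mapsto(x,\omega_l(x))$, every integral against $\Lambda_l$ of a function $\Phi(x,\omega)$ becomes $\int_0^H\Phi(x,\omega_l(x))\,dx$. This requires $x\mapsto\omega_l(x)$ to be measurable into $\mathcal X$, and that is the one technical point I would check first. Fubini gives $\gamma_l$, $a_l$ as jointly measurable. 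Since $\mathcal X$ is compact metrizable, Borel measurability of $x\mapsto\omega_l(x)$ follows from measurability of its compositions with a countable separating family of continuous functions. For such a family I would take the evaluations $\omega\mapsto\gamma(t)$ and $\omega\mapsto a(t)$ at rational $t$, together with $\omega\mapsto\int\varphi\,d\lambda$ for $\varphi$ in a countable dense subset of $C([0,T]\times L)$. For the pure history, $\int\varphi\,d\lambda=\int_0^T\varphi(t,l(x,t))\,dt$, which is measurable in $x$ by Fubini. I must also confirm that $\omega_l(x)$ indeed lies in $\mathcal X$ for a.e.\ $x$. This holds because, for a.e.\ $x$, $t\mapsto l(x,t)$ is measurable with values in $L$, and the integral equations \eqref{eq:memory-relaxed-gamma}--\eqref{eq:memory-relaxed-a} with $\lambda_t=\delta_{l(x,t)}$ reduce to the defining equations of $\gamma_l,a_l$.

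Next I would identify the stress. Applying the push-forward formula to $\Phi(x,\omega)=\gamma(t)$ gives
\begin{equation*}
\bar\gamma_{\Lambda_l}(t)=\frac1H\int_0^H\gamma_l(x,t)\,dx=\bar\gamma_l(t),
\end{equation*}
hence $\bar\tau_{\Lambda_l}=\bar\tau_l$ on $[0,T]$ by \eqref{eq:memory-stress-reconstruction} and \eqref{eq:memory-relaxed-stress}.

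Finally, I would apply the push-forward formula to $\Phi=F_{\bar\tau_l}$, which is continuous by Lemma~\ref{lem:memory-relaxed-defect-continuity}'s proof and therefore Borel. Using $\lambda_t=\delta_{l(x,t)}$, the inner integral collapses to $g(l(x,t);a_l(x,t),\bar\tau_l(t))$. This yields
\begin{equation*}
\mathscr I(\Lambda_l)=\int_0^H\int_0^T g\bigl(l(x,t);a_l(x,t),\bar\tau_l(t)\bigr)\,dt\,dx.
\end{equation*}
Tonelli, justified by nonnegativity of $g$, exchanges the order of integration, and Proposition~\ref{prop:memory-pure-defect} identifies the result with $I(l)$.

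The main obstacle is only the measurability bookkeeping of the embedding $x\mapsto\omega_l(x)$. Everything else is a direct substitution, so I would keep that step short.
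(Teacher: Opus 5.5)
Your proposal is correct and follows the paper's own argument: push the evaluation $\omega\mapsto\gamma(t)$ through the graph map to get $\bar\tau_{\Lambda_l}=\bar\tau_l$, then substitute $\lambda_t=\delta_{l(x,t)}$ into \eqref{eq:memory-explicit-relaxed-defect} to recover \eqref{eq:memory-pure-defect}. The paper leaves two details implicit: the Borel measurability of $x\mapsto\omega_l(x)$, which you obtain from a countable separating family of continuous functions on the compact metrizable space $\mathcal X$, and the Tonelli exchange of the $x$- and $t$-integrations; your treatment of both is sound.
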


\begin{proof}
For $\Lambda_{l}$, the local control is $\lambda_t=\delta_{l(x,t)}$, the averaged shear in \eqref{eq:memory-relaxed-stress} is $\fint_0^H\gamma_{l}(x,t)\,dx$, and therefore ${\bar\tau}_{\Lambda_{l}}={\bar\tau}_{l}$. Substitution into \eqref{eq:memory-explicit-relaxed-defect} gives \eqref{eq:memory-pure-defect}.
\end{proof}

\subsection{Exact relaxation of the diagonal defect}
\label{subsec:memory-exact-relaxation}

We can now identify the abstract relaxed defect of Section~\ref{sec:direct-almost-equilibrium-relaxation}. We use the narrow metric on $\mathcal Y$ and identify the pure rate $l$ with the graph measure $\Lambda_{l}$. The infinite extension of the pure defect is therefore the function $\mathcal I$ of \eqref{eq:infinite-defect-extension}, with value $I(l)$ on pure graph measures and $+\infty$ otherwise.

\begin{theorem}[Exact history-space relaxation]
\label{thm:memory-exact-relaxation}
Under the assumptions of Sections~\ref{subsec:memory-antiplane-reduction}--\ref{subsec:memory-history-space}, the sequential lower-semicontinuous envelope of the pure diagonal defect is exactly \eqref{eq:memory-explicit-relaxed-defect}:
\begin{equation}
\overline{\mathcal I}(\Lambda)=\mathscr I(\Lambda),
\quad \forall\Lambda\in\mathcal Y.
\label{eq:memory-relaxation-formula}
\end{equation}
More precisely, if $l_h\in Y$ and $\Lambda_{l_h}\rightharpoonup\Lambda$, then
\begin{equation}
\mathscr I(\Lambda)\leq\liminf_{h\to\infty}I(l_h).
\label{eq:memory-relaxation-liminf}
\end{equation}
Conversely, for every $\Lambda\in\mathcal Y$ there is a sequence $l_h\in Y$, which may be chosen piecewise-constant in space and time, such that
\begin{equation}
\Lambda_{l_h}\rightharpoonup\Lambda,
\quad
I(l_h)\to\mathscr I(\Lambda).
\label{eq:memory-relaxation-recovery}
\end{equation}
\end{theorem}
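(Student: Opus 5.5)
The proof splits into a liminf inequality and a recovery construction. Together they give \eqref{eq:memory-relaxation-formula} through the second expression for $\overline{\mathcal I}$ in \eqref{eq:relaxed-diagonal-defect}. That formula takes the infimum over pure sequences $l_h$ with $\Lambda_{l_h}\rightharpoonup\Lambda$. Every $l\in Y$ lies in $D(J)$, since $g$ is bounded on the relevant compact range.

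\emph{Liminf inequality.} Let $\Lambda_{l_h}\rightharpoonup\Lambda$. By Lemma~\ref{lem:memory-pure-relaxed-consistency}, $I(l_h)=\mathscr I(\Lambda_{l_h})$. By Lemma~\ref{lem:memory-relaxed-defect-continuity}, $\mathscr I(\Lambda_{l_h})\to\mathscr I(\Lambda)$. This gives \eqref{eq:memory-relaxation-liminf}, in fact with equality as a limit. Consequently $\mathscr I\le\overline{\mathcal I}$, with the convention that $\overline{\mathcal I}(\Lambda)=+\infty$ whenever no pure sequence converges to $\Lambda$. Since $\mathscr I$ is continuous, the equality $\overline{\mathcal I}=\mathscr I$ reduces to showing that pure graph measures are sequentially dense in $\mathcal Y$. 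The recovery statement \eqref{eq:memory-relaxation-recovery} then follows automatically from continuity of $\mathscr I$ together with Lemma~\ref{lem:memory-pure-relaxed-consistency}.

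\emph{Density of pure graph measures.} This is the main step, and I would build the approximation in three stages.

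\begin{enumerate}
\item First approximate $\Lambda$ narrowly by measures that are piecewise constant in $x$. Partition $[0,H]$ into cells $Q_j$ of size $\delta$ and replace $\Lambda_x$ on $Q_j$ by the cell average $\bar\Lambda_j:=\delta^{-1}\int_{Q_j}\Lambda_x\,dx$. Test functions are uniformly continuous on the compact space $[0,H]\times\mathcal X$, so this converges narrowly as $\delta\to0$.
\item Next approximate each $\bar\Lambda_j\in\mathcal P(\mathcal X)$ by a finite atomic measure $\sum_k\theta_{jk}\delta_{\omega_{jk}}$, using compactness and metrizability of $\mathcal X$ from Lemma~\ref{lem:memory-local-history-compactness}. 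We may take rational weights.
\item By Lemma~\ref{lem:memory-local-history-compactness}, each $\omega_{jk}$ is a limit of pure histories $\omega_{jk}^h$, given by piecewise-constant Dirac controls $r_{jk}^h(t)$. Subdivide $Q_j$ into consecutive subintervals $Q_{jk}$ of length $\theta_{jk}|Q_j|$ and set $l(x,t):=r_{jk}^h(t)$ for $x\in Q_{jk}$.
\end{enumerate}

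Because the memory law is local in $x$, the pure history at $x\in Q_{jk}$ is exactly $\omega_{jk}^h$. Hence $\Lambda_l=\sum_{j,k}\mathcal L^1\mres Q_{jk}\otimes\delta_{\omega^h_{jk}}$, which is narrowly close to $\sum_{j,k}\mathcal L^1\mres Q_j\otimes\theta_{jk}\delta_{\omega_{jk}}$. To see this, test against a continuous $\varphi(x,\omega)$ and use the uniform continuity of $\varphi$ in $x$ on scale $\delta$.

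A diagonal argument over the parameters $\delta$, the atomic approximation, and $h$ then produces $l_h$, piecewise constant in space and time, with $\Lambda_{l_h}\rightharpoonup\Lambda$. The metric on $\mathcal Y$ is what makes this diagonal extraction legitimate.

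\emph{Main obstacle.} The delicate point is that the stress ${\bar\tau}$ is a nonlocal functional of $\Lambda$, and the defect depends on it. Continuity handles this coupling: Lemma~\ref{lem:memory-relaxed-defect-continuity} already gives uniform convergence of ${\bar\tau}_{\Lambda_h}$ under narrow convergence alone. So once density is established, no separate cost-matching argument, such as \eqref{eq:memory-local-chattering-cost}, is needed.

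The one check I would make carefully is that $\omega\mapsto\gamma(t)$ and the map $F_{\bar\tau}$ are continuous on $\mathcal X$ under the product topology. Both rely on the narrow convergence of relaxed controls combined with the uniform convergence of $a$, and this is what justifies applying the continuity lemma to the atomic and spatially-averaged approximants.
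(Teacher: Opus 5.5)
Your proposal is correct and follows essentially the same route as the paper. The liminf inequality (in fact convergence) comes from Lemmas~\ref{lem:memory-relaxed-defect-continuity} and~\ref{lem:memory-pure-relaxed-consistency}; the recovery sequence comes from cell-averaging $\Lambda_x$, approximating by finite atomic measures in $\mathcal P(\mathcal X)$ split according to the weights, and chattering the finitely many relaxed histories via Lemma~\ref{lem:memory-local-history-compactness}, with your point that separate cost-matching is unnecessary consistent with the paper's closing appeal to continuity of $\mathscr I$.
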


\begin{proof}
The liminf inequality follows immediately from Lemmas~\ref{lem:memory-relaxed-defect-continuity} and~\ref{lem:memory-pure-relaxed-consistency}; in fact, along every convergent pure sequence one has $I(l_h)=\mathscr I(\Lambda_{l_h})\to\mathscr I(\Lambda)$.

It remains to prove density of pure graph histories. Fix $\Lambda\in\mathcal Y$. Since $[0,H]\times\mathcal X$ is compact and the first marginal of $\Lambda$ is Lebesgue measure, a standard spatial chattering construction, see e.g.~\cite{Warga1972, CastaingRaynaudDeFitteValadier2004}, gives a sequence of graph measures $\widetilde\Lambda_h=(x\mapsto(x,\widetilde\omega_h(x)))_\#\mathcal L^1\mres[0,H]$ with $\widetilde\omega_h$ piecewise constant in $x$ and $\widetilde\Lambda_h \rightharpoonup \Lambda$. One way to construct the sequence is to partition $[0,H]$ into cells of diameter tending to zero, approximate each conditional measure $\Lambda_x$ on every cell by a finite atomic measure on $\mathcal X$, and subdivide the cell according to the atomic masses.

More explicitly, on each spatial cell $Q$ we first replace the family $x \mapsto \Lambda_x$ by its average
\begin{equation}
    \bar\Lambda_Q:=\fint_Q \Lambda_x\,dx,
\end{equation}
approximate $\bar\Lambda_Q$ narrowly by a finite atomic probability measure $\sum_j\theta_{Q,j}\delta_{\omega_{Q,j}}$, and partition $Q$ into measurable subsets $Q_j$ with $|Q_j|=\theta_{Q,j}|Q|$, setting $\widetilde\omega_h(x)=\omega_{Q,j}$ on $Q_j$.  Since the cell diameters tend to zero and continuous functions on the compact space $[0,H]\times\mathcal X$ are uniformly continuous, testing against any $\varphi\in C([0,H]\times\mathcal X)$ shows that the resulting graph measures $\widetilde\Lambda_h$ converge narrowly to $\Lambda$.

The finitely many histories used in each $\widetilde\omega_h$ may be relaxed rather than pure. Apply Lemma~\ref{lem:memory-local-history-compactness} to each of these finitely many histories, choosing the temporal chattering sufficiently fine that the resulting pure histories change both the graph measure and the value of $\mathscr I$ by at most $1/h$. Assign the corresponding pure rate histories to the same spatial subcells. This produces $l_h\in Y$, piecewise-constant in space and time, with $\Lambda_{l_h}\rightharpoonup\Lambda$. Lemma~\ref{lem:memory-relaxed-defect-continuity} and Lemma~\ref{lem:memory-pure-relaxed-consistency} then give
\begin{equation}
I(l_h)=\mathscr I(\Lambda_{l_h})\to\mathscr I(\Lambda).
\end{equation}
The recovery sequence proves the upper bound in the definition \eqref{eq:relaxed-diagonal-defect}, and \eqref{eq:memory-relaxation-formula} follows.
\end{proof}

The theorem gives an intrinsic characterization of the relaxed zero set.

\begin{corollary}[Relaxed local equilibrium]
\label{cor:memory-relaxed-zero}
A history Young measure $\Lambda\in\mathcal Y$ is a relaxed diagonal equilibrium if and only if
\begin{equation}
\lambda_{\omega,t}\bigl({M}(a_\omega(t),{\bar\tau}_\Lambda(t))\bigr)=1
\label{eq:memory-relaxed-support-condition}
\end{equation}
for $\Lambda$-a.e.~$(x,\omega)$ and a.e.~$t\in(0,T)$. Every such $\Lambda$ admits a pure recovery sequence $l_h\in Y$ satisfying $\Lambda_{l_h}\rightharpoonup\Lambda$ and $I(l_h)\to0$.
\end{corollary}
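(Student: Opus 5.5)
The corollary follows by combining Proposition~\ref{prop:abstract-relaxation-equivalence} with Theorem~\ref{thm:memory-exact-relaxation}. By Proposition~\ref{prop:abstract-relaxation-equivalence}, $\Lambda\in\overline M(\mathcal J)$ if and only if $\overline{\mathcal I}(\Lambda)=0$. By \eqref{eq:memory-relaxation-formula}, this holds if and only if $\mathscr I(\Lambda)=0$. Thus the whole task is to show that $\mathscr I(\Lambda)=0$ is equivalent to the support condition \eqref{eq:memory-relaxed-support-condition}, and then to read off the recovery sequence.

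For the equivalence, note that the integrand $g(r;a_\omega(t),{\bar\tau}_\Lambda(t))$ in \eqref{eq:memory-explicit-relaxed-defect} is continuous and nonnegative. A nonnegative iterated integral vanishes if and only if the inner integrals vanish almost everywhere. So $\mathscr I(\Lambda)=0$ exactly when, for $\Lambda$-a.e.\ $(x,\omega)$ and a.e.\ $t$, we have $\int_L g(r;a_\omega(t),{\bar\tau}_\Lambda(t))\,d\lambda_{\omega,t}(r)=0$. Since $g(\cdot;a,b)$ is continuous and nonnegative, this integral vanishes if and only if $\lambda_{\omega,t}$ is concentrated on the closed zero set $\{r\in L:g(r;a,b)=0\}$. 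By \eqref{eq:memory-local-gap}--\eqref{eq:memory-best-response-set}, that zero set is precisely ${M}(a,b)$.

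Measurability needs a brief check. The map $(x,\omega,t)\mapsto\int_L g\,d\lambda_{\omega,t}$ is jointly measurable because $F_{\bar\tau}$ is continuous in $\omega$ (as in the proof of Lemma~\ref{lem:memory-relaxed-defect-continuity}) and the disintegration $t\mapsto\lambda_{\omega,t}$ is measurable. Tonelli's theorem then justifies the a.e.\ statements in the product measure $\Lambda\otimes dt$. The only subtle point is the quantifier order, ``for $\Lambda$-a.e.\ $(x,\omega)$ and a.e.\ $t$''. Reading this as a.e.\ with respect to $\Lambda\otimes\mathcal L^1$ is equivalent to the iterated version, again by Tonelli.

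For the recovery sequence, use the recovery part \eqref{eq:memory-relaxation-recovery} of Theorem~\ref{thm:memory-exact-relaxation}. It supplies pure $l_h\in Y$, piecewise constant in space and time, with $\Lambda_{l_h}\rightharpoonup\Lambda$ and $I(l_h)\to\mathscr I(\Lambda)=0$. Equivalently, Definition~\ref{def:I-relaxed-diagonal-equilibrium} together with Proposition~\ref{prop:abstract-relaxation-equivalence} already yields such a sequence. Since the existence of the sequence is immediate, I expect the only real work to be the measure-theoretic bookkeeping in the equivalence step above.
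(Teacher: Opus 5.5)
Your proposal is correct and follows the paper's own proof: Proposition~\ref{prop:abstract-relaxation-equivalence} together with Theorem~\ref{thm:memory-exact-relaxation} identifies relaxed equilibria with zeros of $\mathscr I$. Nonnegativity of $g$ reduces $\mathscr I(\Lambda)=0$ to $g=0$ almost everywhere for $\Lambda\otimes dt\otimes\lambda_{\omega,t}$, the zero set of $g(\cdot;a,b)$ is $M(a,b)$ by definition, and the recovery sequence is \eqref{eq:memory-relaxation-recovery} at zero defect.

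One simplification for your measurability check: the iterated reading of the quantifiers needs no joint measurability in $(\omega,t)$. Since $\mathscr I(\Lambda)=\int F_{\bar\tau_\Lambda}\,d\Lambda$ with $F_{\bar\tau_\Lambda}\geq0$ continuous, it vanishes if and only if $F_{\bar\tau_\Lambda}(\omega)=0$ for $\Lambda$-a.e.\ $(x,\omega)$. For each such $\omega$, the fixed-$\omega$ disintegration then gives the statement for a.e.\ $t$.
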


\begin{proof}
By \eqref{eq:memory-explicit-relaxed-defect}, $\mathscr I(\Lambda)$ is the integral of the nonnegative function $g$. Its integral vanishes exactly when $g=0$ for the product of $\Lambda$, Lebesgue measure in time, and the relaxed control. By \eqref{eq:memory-local-gap}--\eqref{eq:memory-best-response-set}, this is equivalent to \eqref{eq:memory-relaxed-support-condition}. Theorem~\ref{thm:memory-exact-relaxation} and Proposition~\ref{prop:abstract-relaxation-equivalence} identify this zero set with the relaxed equilibria of Section~\ref{sec:direct-almost-equilibrium-relaxation}, and the recovery statement is \eqref{eq:memory-relaxation-recovery} at zero defect.
\end{proof}

\begin{remark}[The payoff is not extended to relaxed histories]
\label{rem:memory-no-relaxed-payoff}
{\rm Theorem~\ref{thm:memory-exact-relaxation} computes the relaxed defect, not a new pointwise game on $\mathcal Y$. At a non-Dirac history measure there is no canonical way to pair an arbitrary relaxed competitor with the microscopic state histories retained by $\Lambda$. This is exactly the distinction emphasized in Remark~\ref{JeyRfv} and in the discussion following \eqref{eq:infinite-payoff-extension-formula}: pure competitors define the approximating games, whereas the enriched object is characterized by the closure of their diagonal defects.} \hfill$\square$
\end{remark}

\subsection{Existence from frozen best responses}
\label{subsec:memory-existence}

In the present example, the vanishing-defect approximability required abstractly in Section~\ref{sec:direct-almost-equilibrium-relaxation} can be proved constructively. The argument also gives a simple time-stepping approximation.

\begin{theorem}[Existence and constructive almost equilibria]
\label{thm:memory-existence}
There exists at least one relaxed diagonal equilibrium $\Lambda^*\in\mathcal Y$. More precisely, for every sequence of partitions
\begin{equation}
0=t_0^h<t_1^h<\cdots<t_{N_h}^h=T,
\quad
\Delta_h:=\max_k(t_{k+1}^h-t_k^h)\downarrow0,
\end{equation}
one can construct $l_h\in Y$, piecewise constant in time, such that on every interval $[t_k^h,t_{k+1}^h)$,
\begin{equation}
l_h(x,t)\in M\bigl(a_{l_h}(x,t_k^h),{\bar\tau}_{l_h}(t_k^h)\bigr) ,
\quad\text{for a.e. }x \in (0,H).
\label{eq:memory-frozen-best-response}
\end{equation}
The resulting defects satisfy
\begin{equation}
I(l_h)\to0.
\label{eq:memory-frozen-defect-zero}
\end{equation}
Every subsequential narrow limit of $\Lambda_{l_h}$ is a relaxed diagonal equilibrium.
\end{theorem}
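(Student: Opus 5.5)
The construction proceeds interval by interval in time, and the key point is that freezing the state at the left endpoint makes the choice on each interval explicit. Suppose $l_h$ has been built on $[0,t_k^h)$. Then $\gamma_{l_h}(\cdot,t_k^h)$, $a_{l_h}(\cdot,t_k^h)$ and ${\bar\tau}_{l_h}(t_k^h)$ are already determined, because the reconstructions are causal. We choose a measurable selection
\begin{equation*}
x\mapsto r_k(x)\in M\bigl(a_{l_h}(x,t_k^h),{\bar\tau}_{l_h}(t_k^h)\bigr).
\end{equation*}
This is possible by the measurable selection argument of Proposition~\ref{prop:memory-pure-defect}, since the multifunction has nonempty compact values and a measurable graph. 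We then set $l_h(x,t):=r_k(x)$ on $[t_k^h,t_{k+1}^h)$. This gives $l_h\in Y$, piecewise constant in time, satisfying \eqref{eq:memory-frozen-best-response}. Because the frozen quantities are evaluated at $t_k^h$, no fixed-point problem arises.

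To prove \eqref{eq:memory-frozen-defect-zero}, we use Proposition~\ref{prop:memory-pure-defect}, which gives $I(l_h)=\int\!\!\int g(l_h;a_{l_h}(x,t),{\bar\tau}_{l_h}(t))\,dx\,dt$. On each interval $g(r_k(x);a_{l_h}(x,t_k^h),{\bar\tau}_{l_h}(t_k^h))=0$, so it suffices to compare the integrand with its value at frozen arguments. All histories have uniform Lipschitz bounds:
\begin{itemize}
\item $|\dot a_{l_h}|\le C_f(1+C_a)$, by \eqref{CU22TD} and the Gronwall bound $|a_{l_h}|\le C_a$;
\item $|\dot{\bar\tau}_{l_h}|\le\mu_0(\|\dot{\bar\xi}\|_\infty+l_0)$, since $\bar u\in W^{1,\infty}$ and $|l_h|\le l_0$.
\end{itemize}
Hence $|a_{l_h}(x,t)-a_{l_h}(x,t_k^h)|+|{\bar\tau}_{l_h}(t)-{\bar\tau}_{l_h}(t_k^h)|\le C\Delta_h$ uniformly in $x$, $t$ and $h$.

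The function $g$ is continuous on the compact set ${L}\times\{|a|\le C_a\}\times\{|b|\le B\}$, where $B$ bounds all reconstructed stresses, so it is uniformly continuous there with some modulus $\varpi$. Therefore $0\le g(l_h;a_{l_h}(t),{\bar\tau}_{l_h}(t))\le\varpi(C\Delta_h)$ pointwise, and $I(l_h)\le HT\,\varpi(C\Delta_h)\to0$. The one point needing care is the uniform continuity of $m(a,b)$, which enters $g$. This holds because $m$ is a minimum over the compact set ${L}$ of a jointly continuous function, so $m$ is continuous and hence uniformly continuous on compacts. No continuity of the argmin $M$ is required; this is why the frozen scheme works despite the non-convexity.

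Existence then follows from the abstract results. The space $\mathcal Y$ is narrowly compact and metrizable, so $(\Lambda_{l_h})$ has narrowly convergent subsequences. Proposition~\ref{prop:relaxed-equilibrium-compactness} (with $\varepsilon_h:=I(l_h)$, via \eqref{SVxmYv}) or directly Definition~\ref{def:I-relaxed-diagonal-equilibrium} shows that every subsequential limit $\Lambda^*$ is a relaxed diagonal equilibrium. Equivalently, Theorem~\ref{thm:memory-exact-relaxation} gives $\mathscr I(\Lambda^*)=\lim I(l_h)=0$, and Corollary~\ref{cor:memory-relaxed-zero} yields the support condition \eqref{eq:memory-relaxed-support-condition}.
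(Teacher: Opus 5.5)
Your proposal is correct and follows the paper's proof essentially step for step. The shared steps are the causal interval-by-interval measurable selection of frozen best responses; uniform Lipschitz bounds on $a_{l_h}$ and $\bar\tau_{l_h}$, giving an $O(\Delta_h)$ deviation from the frozen arguments; a uniform modulus of continuity of $g$, yielding $I(l_h)\le HT\,\varpi(C\Delta_h)\to0$; and compactness of $\mathcal Y$ combined with Proposition~\ref{prop:relaxed-equilibrium-compactness} or Theorem~\ref{thm:memory-exact-relaxation}. Your remark that only continuity of $m$ is needed, not of the argmin $M$, makes explicit what the paper leaves implicit. If you invoke Proposition~\ref{prop:relaxed-equilibrium-compactness}, take the nonincreasing $\varepsilon_h:=HT\,\varpi(C\Delta_h)$ rather than $I(l_h)$ itself, since that proposition asks for $\varepsilon_h\downarrow0$.
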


\begin{proof}
Assume that the path has been constructed up to $t_k^h$. Continuity of $\psi$ implies that the multifunction $x\mapsto{M}(a_{l_h}(x,t_k^h),{\bar\tau}_{l_h}(t_k^h))$ has a measurable graph and nonempty compact values. Choose a measurable selection $r_k^h(x)$ and set $l_h(x,t)=r_k^h(x)$ on $[t_k^h,t_{k+1}^h)$. The reconstruction equations then determine $\gamma_{l_h}$ and $a_{l_h}$ on that interval. Iterating gives the full path and \eqref{eq:memory-frozen-best-response}. If a minimizing set contains several separated rates, the selection may be chosen differently on different spatial subsets, thereby creating band populations without changing the estimate below.

All rates satisfy $|l_h|\leq l_0$. The memory estimate and the evolution equation therefore give a constant $C_1$ independent of $h$ such that 
\begin{equation}
|a_{l_h}(x,t)-a_{l_h}(x,t_k^h)|\leq C_1\Delta_h, \quad t\in[t_k^h,t_{k+1}^h) .
\end{equation}
From \eqref{eq:memory-stress-reconstruction}, 
\begin{equation}
\dot{\bar\tau}_{l_h}(t)=\mu_0(\dot{{\bar\xi}}(t)-\fint_0^H l_h(x,t)\,dx),
\quad \text{for a.e.}\ t\in(0,T) , 
\end{equation}
hence 
\begin{equation}
|{\bar\tau}_{l_h}(t)-{\bar\tau}_{l_h}(t_k^h)|\leq C_2\Delta_h .
\end{equation}
Let $\omega_g$ be a common modulus of continuity of $g$ on the compact set containing ${L}$, all reconstructed memories, and all reconstructed stresses. By \eqref{eq:memory-frozen-best-response}, 
\begin{equation}
g(l_h(x,t);a_{l_h}(x,t_k^h),{\bar\tau}_{l_h}(t_k^h))=0 . 
\end{equation}
Therefore,
\begin{equation}
g\bigl(l_h(x,t);a_{l_h}(x,t),{\bar\tau}_{l_h}(t)\bigr)
\leq\omega_g\bigl((C_1+C_2)\Delta_h\bigr)
\end{equation}
for a.e.~$(x,t)$. Integration gives
\begin{equation}
0\leq I(l_h)\leq H T\,\omega_g\bigl((C_1+C_2)\Delta_h\bigr)\to0.
\end{equation}
The space $\mathcal Y$ is compact, so $\Lambda_{l_h}$ has a narrowly convergent subsequence. Proposition~\ref{prop:relaxed-equilibrium-compactness}, or equivalently Theorem~\ref{thm:memory-exact-relaxation}, shows that every subsequential limit is a relaxed diagonal equilibrium.
\end{proof}

\begin{remark}[What the construction proves]
\label{rem:memory-existence-strength}
{\rm The theorem verifies the approximability hypothesis that is substantive in the abstract direct method. Compactness is built into the history space, and Theorem~\ref{thm:memory-exact-relaxation} supplies the recovery and lower-semicontinuity statements. Thus all three ingredients of the abstract relaxation method are explicit here: vanishing-defect approximation, compactness, and identification of the relaxed defect. } \hfill$\square$
\end{remark}

\begin{example}[Serrated recovery and band-specific memory]
{\rm Under general loading, different rate populations acquire different memories and are represented more naturally by distinct complete histories. To illustrate this point, let $\Lambda\in\mathcal Y$ be a relaxed zero satisfying \eqref{eq:memory-relaxed-support-condition}, and suppose for purposes of illustration that its history marginal consists of two populations,
\begin{equation}
d\Lambda(x,\omega)
=dx\bigl(\theta\,\delta_{\omega^-}(d\omega)
+(1-\theta)\delta_{\omega^+}(d\omega)\bigr),
\quad 0<\theta<1,
\label{eq:serrated-two-history-measure}
\end{equation}
with $\omega^\pm=(\gamma^\pm,a^\pm,\lambda^\pm)$. Thus, for a.e.~$t$,
$\lambda_t^\pm$ is supported on
$M(a^\pm(t),\bar\tau_\Lambda(t))$, but in general
$a^-(t)\neq a^+(t)$. The relaxed macroscopic plastic rate is
\begin{equation}
\bar r_\Lambda(t)
:=\theta\int_L r\,d\lambda_t^-(r)
+\bigl(1-\theta\bigr)\int_L r\,d\lambda_t^+(r),
\label{eq:serrated-relaxed-rate}
\end{equation}
and differentiation of \eqref{eq:memory-relaxed-stress} gives
\begin{equation}
\dot{\bar\tau}_\Lambda(t)
=\mu_0\bigl(\dot{\bar\xi}(t)-\bar r_\Lambda(t)\bigr).
\label{eq:serrated-relaxed-stress-rate}
\end{equation}
History dependence is already evident for a simple aging law of the form
\begin{equation}
f(a,r)=(1-a)/t_a-\kappa|r|a . 
\end{equation}
Indeed,
\begin{equation}
\dot a^\pm(t)
=\frac{1-a^\pm(t)}{t_a}
-\kappa a^\pm(t)\int_L |r|\,d\lambda_t^\pm(r).
\label{eq:serrated-memory-mixture}
\end{equation}
If $\rho_\pm:=\int_L|r|\,d\lambda_t^\pm(r)$ are constant on an interval $[t_0,t_1]$, then
\begin{equation}
a^\pm(t)
=a_{\rm ss}^\pm+\bigl(a^\pm(t_0)-a_{\rm ss}^\pm\bigr)
\exp\left[-\beta_\pm(t-t_0)\right],
\quad
a_{\rm ss}^\pm:=\frac{1}{1+\kappa t_a\rho_\pm},
\quad
\beta_\pm:=\frac{1}{t_a}+\kappa\rho_\pm.
\label{eq:serrated-memory-explicit}
\end{equation}
Hence $\rho_-\neq\rho_+$ produces distinct memory paths even if the two
populations start from the same value. The limit therefore retains more than
the barycentric rate: it retains the correlation between each rate history and
the memory generated by that history.

Corollary~\ref{cor:memory-relaxed-zero} and Theorem~\ref{thm:memory-exact-relaxation} provide pure, piecewise-constant recovery sequences $l_h$ such that $\Lambda_{l_h}\rightharpoonup\Lambda$, and $I(l_h)\to0$. The temporal chattering in the proof of Lemma~\ref{lem:memory-local-history-compactness} may be ordered on every small time cell $I_k^h=[t_k^h,t_{k+1}^h)$ so that the lower-rate selections occur first and the higher-rate selections second, without changing their cell fractions. Writing
\begin{equation}
\bar{q}_h(t):=\fint_0^H l_h(x,t)\,dx-\bar r_\Lambda(t),
\label{eq:serrated-rate-fluctuation}
\end{equation}
the construction gives a uniformly bounded $\bar{q}_h$ whose primitive is small:
there is $C>0$, independent of $h$, such that
\begin{equation}
\sup_{0\leq t\leq T}\left|\int_0^t \bar{q}_h(s)\,ds\right|
\leq C\Delta_h,
\quad
\Delta_h:=\max_k|I_k^h|\downarrow0.
\label{eq:serrated-primitive-estimate}
\end{equation}
The pure stress satisfies
\begin{equation}
\dot{\bar\tau}_{l_h}=\mu_0\bigr(\dot{\bar\xi}-\fint_0^H l_h\,dx\bigl) , 
\end{equation}
and therefore
\begin{equation}
\bar\tau_{l_h}(t)-\bar\tau_\Lambda(t)
=-\mu_0\int_0^t \bar{q}_h(s)\,ds,
\quad
\|\bar\tau_{l_h}-\bar\tau_\Lambda\|_{L^\infty(0,T)}
\leq\mu_0 C\Delta_h\to0.
\label{eq:serrated-uniform-stress}
\end{equation}
Thus the pure approximants may exhibit alternating stress rises and drops on
the chattering scale while converging uniformly to a smooth relaxed stress
path. The serrations disappear from the macroscopic limit, but the limiting
history Young measure in \eqref{eq:serrated-two-history-measure} remains
non-Dirac and retains the band-specific memories.

\begin{figure}[t]
    \centering
    \begin{subfigure}[t]{0.48\textwidth}
        \centering
        \includegraphics[width=\textwidth]{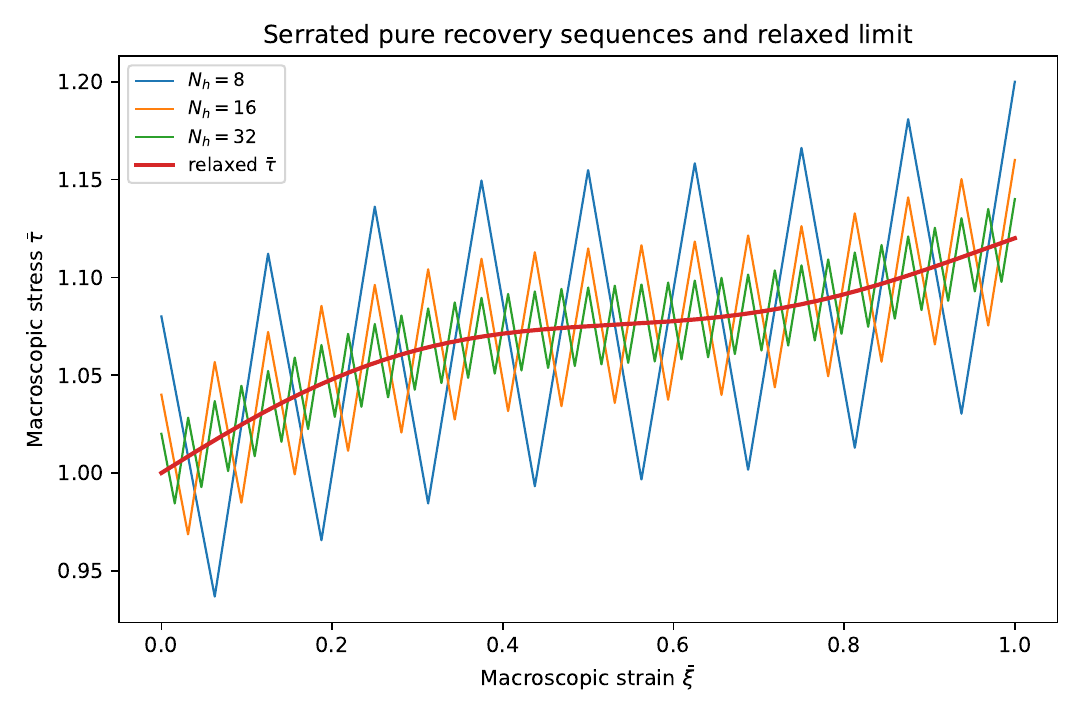}
        \caption{Pure serrated stress--strain paths for
        $N_h=8,16,32$ and the relaxed limit.}
    \end{subfigure}
    \hfill
    \begin{subfigure}[t]{0.48\textwidth}
        \centering
        \includegraphics[width=\textwidth]{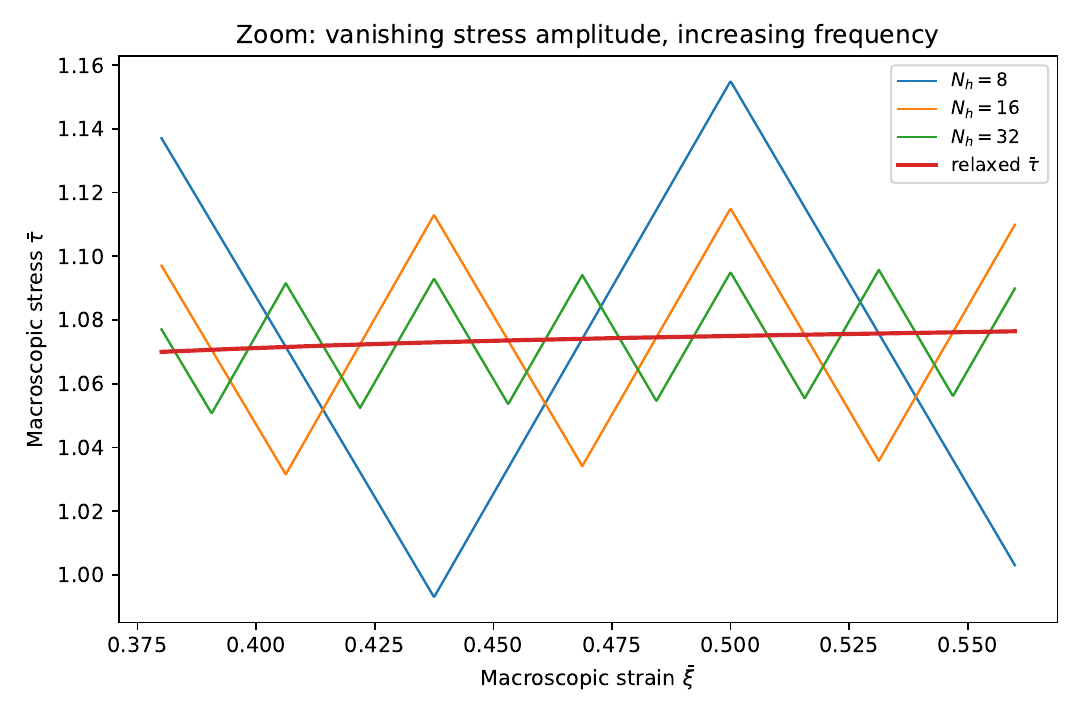}
        \caption{Zoom showing vanishing serration amplitude and increasing
        switching frequency.}
    \end{subfigure}
    \caption{Serrated pure almost-equilibrium recovery sequences converging
    uniformly to a relaxed macroscopic stress--strain path. Although the
    macroscopic oscillations vanish, $\Lambda_{l_h}\rightharpoonup\Lambda$
    retains the distinct complete histories and their rate--memory
    correlations.}
    \label{fig:serrated-recovery}
\end{figure}

Figure~\ref{fig:serrated-recovery} illustrates this mechanism numerically. For monotone loading $\bar\xi=t$, take
\begin{equation}
\bar\tau(\bar\xi)
=1+0.18\bar\xi-0.06\bar\xi^2+0.015\sin(2\pi\bar\xi),
\quad 0\leq\bar\xi\leq1,
\label{eq:serrated-plot-limit}
\end{equation}
and define
\begin{equation}
\bar\tau_h(\bar\xi)
=\bar\tau(\bar\xi)
+\frac{\eta}{N_h}\operatorname{tri}(N_h\bar\xi),
\quad
\eta=0.64,
\quad
N_h\in\{8,16,32\},
\label{eq:serrated-plot-sequence}
\end{equation}
where $\operatorname{tri}(s):=4|s-\lfloor s\rfloor-1/2|-1$. Then $\|\bar\tau_h - \bar\tau\|_{L^\infty}\leq\eta/N_h\to0$, whereas away from corners the associated average plastic rate satisfies
\begin{equation}
\bar r_h(\bar\xi)
=1-\frac{1}{\mu_0}\frac{d\bar\tau_h}{d\bar\xi}
=\bar r(\bar\xi)-\frac{4\eta}{\mu_0}\sigma_h(\bar\xi),
\quad
\sigma_h(\bar\xi)\in\{-1,1\}.
\label{eq:serrated-plot-rate-switching}
\end{equation}
Thus, the rate switching remains of fixed amplitude while its period tends to zero, whereas the induced stress serrations have amplitude $O(N_h^{-1})$. The plotted curves are a schematic visualization of the recovery kinematics; they do not presuppose an additional special form of a kinetic potential density $\psi$ beyond the support condition \eqref{eq:memory-relaxed-support-condition}.}\hfill$\square$
\end{example}

\section{Summary and concluding remarks}

We have formulated energy--dissipation evolution as a game between state and rate histories in which the state--rate consistency constraint is eliminated by the causal Volterra reconstruction $Rv=u_0+Kv$. The resulting evolution problem is a diagonal equilibrium for the reduced payoff $J(w;v)=G(w;Rv)$. This formulation separates the two occurrences of the rate variable and identifies the approximability, compactness, and complementary semicontinuity properties needed for existence. It is closely related to variational inequalities, equilibrium problems, monotone-operator theory, and convex integral functionals \cite{Stackelberg1934, BasarOlsder1999, Fan1961, BlumOettli1994, BrowderHess1972, LerayLions1965, Brezis1968, Zeidler1990, Showalter1997, Rockafellar1971IntegralII, Rockafellar1971ConvexIntegralDuality}, but acts on a causal diagonal rather than on a single static functional.

The examples developed here illustrate two different compactness regimes. In the representative superlinear-growth system of Section~\ref{subsec:superlinear-energy-dissipation}, the rate space is reflexive and the complete diagonal defect yields existence, uniqueness, and strong convergence of a temporal Galerkin approximation. In the representative linear-growth system of Section~\ref{subsec:linear-growth-energy-dissipation}, the natural compact ambient space for rates is the space of vector-valued Radon measures, with right-continuous $BV$ reconstructed states \cite{DiestelUhl1977,Reshetnyak1968,AmbrosioDalMaso1990,AFP00}. The centered state representative gives the exact quadratic chain rule at atoms, energy balance supplies the total-variation bound needed for weak-star compactness, and viscous regularization together with temporal measure approximation yields the unique equilibrium. Under the $W^{1,1}$ loading hypothesis used here, the limiting equilibrium rate is again absolutely continuous.

When compactness or lower semicontinuity fails in the pure path space, the relaxation developed in Section~\ref{sec:direct-almost-equilibrium-relaxation} closes pure almost equilibria whose diagonal defects vanish in an enriched topology. The relaxed object is therefore the sequential closure of the evolutionary equilibrium principle, rather than a pointwise extension of the original game to generalized states. Section~\ref{sec:local-memory-banding} gives a complete realization of that construction. There the enriched state is a Young measure on complete local histories, and the sequential lower-semicontinuous envelope of the pure defect is identified explicitly with a nonnegative history-space gap functional. Its zero set is exactly the set of relaxed local best responses, every relaxed zero has a pure recovery sequence, and a frozen-best-response time discretization supplies the vanishing-defect approximability needed for existence.

The local-memory example also clarifies why a purely instantaneous relaxation can be insufficient. If different rate populations follow different kinetic branches, an internal variable driven by the local rate separates their subsequent histories. Collapsing those populations to a single average rate therefore loses information needed to predict future response. The history Young measure retains precisely this correlation. In a two-rate regime the common-tangent construction gives the usual coexistence picture, while the memory variable allows the two bands to evolve different local kinetic landscapes. This provides a reduced connection with dynamic strain aging, negative strain-rate sensitivity, and Portevin--Le Ch\^atelier-type localization \cite{Yilmaz2011,McCormick1988,Ananthakrishna2007}.

The example is intentionally unregularized in space. It therefore does not select a band width, a band location, or a propagation speed. Adding spatial diffusion of the internal variable, heat conduction, gradient regularization, or nonlocal dislocation transport would couple neighboring histories and turn the elementary chattering recovery into a genuine space--time recovery problem. Multi-slip crystal plasticity with latent hardening provides another natural next step: there the microscopic state is subject to differential compatibility and branch constraints, so recovery must preserve both kinetic action and spatial compatibility. These extensions are substantially harder and are not required for the exact local-memory relaxation proved here.

More generally, the Volterra reconstruction treated here is a particularly reducible causal constraint: its unique state response can be written explicitly and the two-player structure collapses to the diagonal bifunction $J(w;v)$. For other causal constraints the game structure should be more visible. Mass transport provides a natural example: the continuity equation couples density and flux through a PDE constraint, while the Benamou--Brenier action supplies a trajectory-level kinetic functional \cite{BenamouBrenier2000,AmbrosioGigliSavare2008}. Even when the state is uniquely determined in a suitable weak sense, eliminating it need not yield an elementary reconstruction operator or the most useful analytical formulation. It is therefore natural to study the unreduced state--rate equilibrium directly, together with compactness and relaxation of both responses. Determining when such constrained causal games admit a diagonal reduction, and when their almost equilibria have the compactness, semicontinuity, and recovery properties developed here, is a direction for further work.

\section*{Acknowledgements}

The financial support of the \emph{Centre Internacional de M\`etodes Num\`erics a l'Enginyeria} (CIMNE) of the \emph{Universitat Polit\`ecnica de Catalunya} (UPC), Spain, through the \emph{UNESCO Chair in Numerical Methods in Engineering} is gratefully acknowledged.


\begin{thebibliography}{99}

\bibitem{BrowderHess1972}
F.~E. Browder and P.~Hess,
\newblock Nonlinear mappings of monotone type in Banach spaces.
\newblock \emph{Journal of Functional Analysis} \textbf{11} (1972), no.~3, 251--294.
\newblock doi:10.1016/0022-1236(72)90070-5.

\bibitem{LerayLions1965}
J.~Leray and J.-L.~Lions,
\newblock Quelques r\'esultats de Vi\v{s}ik sur les probl\`emes elliptiques non lin\'eaires par les m\'ethodes de Minty--Browder.
\newblock \emph{Bulletin de la Soci\'et\'e Math\'ematique de France} \textbf{93} (1965), 97--107.
\newblock doi:10.24033/bsmf.1617.

\bibitem{Brezis1968}
H.~Br\'ezis,
\newblock \'Equations et in\'equations non lin\'eaires dans les espaces vectoriels en dualit\'e.
\newblock \emph{Annales de l'Institut Fourier} \textbf{18} (1968), no.~1, 115--175.
\newblock doi:10.5802/aif.280.

\bibitem{Zeidler1990}
E.~Zeidler,
\newblock \emph{Nonlinear Functional Analysis and Its Applications, II/B: Nonlinear Monotone Operators}.
\newblock Springer-Verlag, New York, 1990.
\newblock doi:10.1007/978-1-4612-0981-2.

\bibitem{Showalter1997}
R.~E.~Showalter,
\newblock \emph{Monotone Operators in Banach Space and Nonlinear Partial Differential Equations}.
\newblock Mathematical Surveys and Monographs, vol.~49, American Mathematical Society, Providence, RI, 1997.
\newblock doi:10.1090/surv/049.

\bibitem{Stackelberg1934}
H.~von Stackelberg,
\newblock \emph{Marktform und Gleichgewicht}.
\newblock Julius Springer, Wien and Berlin, 1934.

\bibitem{BasarOlsder1999}
T.~Ba\c{s}ar and G.~J.~Olsder,
\newblock \emph{Dynamic Noncooperative Game Theory}, 2nd ed.
\newblock Classics in Applied Mathematics, vol.~23, SIAM, Philadelphia, 1999.
\newblock doi:10.1137/1.9781611971132.

\bibitem{Minty1962}
G.~J.~Minty,
\newblock Monotone (nonlinear) operators in Hilbert space.
\newblock \emph{Duke Mathematical Journal} \textbf{29} (1962), no.~3, 341--346.
\newblock doi:10.1215/S0012-7094-62-02933-2.

\bibitem{Fan1961}
K.~Fan,
\newblock A generalization of Tychonoff's fixed point theorem.
\newblock \emph{Mathematische Annalen} \textbf{142} (1961), 305--310.
\newblock doi:10.1007/BF01353421.

\bibitem{BlumOettli1994}
E.~Blum and W.~Oettli,
\newblock From optimization and variational inequalities to equilibrium problems.
\newblock \emph{The Mathematics Student} \textbf{63} (1994), 123--145.

\bibitem{DeGiorgiMarinoTosques1980}
E.~De Giorgi, A.~Marino, and M.~Tosques,
\newblock Problemi di evoluzione in spazi metrici e curve di massima pendenza.
\newblock \emph{Atti della Accademia Nazionale dei Lincei. Classe di Scienze Fisiche, Matematiche e Naturali. Rendiconti} \textbf{68} (1980), no.~3, 180--187.

\bibitem{BrezisEkeland1976}
H.~Br\'ezis and I.~Ekeland,
\newblock Un principe variationnel associ\'e \`a certaines \'equations paraboliques.
\newblock \emph{C. R. Acad. Sci. Paris S\'er. A--B} \textbf{282} (1976), A971--A974 and A1197--A1198.

\bibitem{Nayroles1976}
B.~Nayroles,
\newblock Deux th\'eor\`emes de minimum pour certains syst\`emes dissipatifs.
\newblock \emph{C. R. Acad. Sci. Paris S\'er. A--B} \textbf{282} (1976), A1035--A1038.

\bibitem{AmbrosioDalMaso1990}
L.~Ambrosio and G.~Dal Maso,
\newblock A general chain rule for distributional derivatives.
\newblock \emph{Proceedings of the American Mathematical Society} \textbf{108} (1990), no.~3, 691--702.
\newblock doi:10.2307/2047789.

\bibitem{MielkeTheil2004}
A.~Mielke and F.~Theil,
\newblock On rate-independent hysteresis models.
\newblock \emph{Nonlinear Differential Equations and Applications} \textbf{11} (2004), 151--189.
\newblock doi:10.1007/s00030-003-1052-7.

\bibitem{MielkeRossiSavare2012}
A.~Mielke, R.~Rossi, and G.~Savar\'e,
\newblock $BV$ solutions and viscosity approximations of rate-independent systems.
\newblock \emph{ESAIM: Control, Optimisation and Calculus of Variations} \textbf{18} (2012), no.~1, 36--80.
\newblock doi:10.1051/cocv/2010054.

\bibitem{MielkeRossiSavare2016}
A.~Mielke, R.~Rossi, and G.~Savar\'e,
\newblock Balanced Viscosity ($BV$) solutions to infinite-dimensional rate-independent systems.
\newblock \emph{Journal of the European Mathematical Society} \textbf{18} (2016), no.~9, 2107--2165.
\newblock doi:10.4171/JEMS/639.

\bibitem{DiestelUhl1977}
J.~Diestel and J.~J.~Uhl, Jr.,
\newblock \emph{Vector Measures}.
\newblock Mathematical Surveys, vol.~15, American Mathematical Society, Providence, RI, 1977.
\newblock doi:10.1090/surv/015.

\bibitem{OrtizRepetto1999}
M.~Ortiz and E.~A.~Repetto,
\newblock Nonconvex energy minimization and dislocation structures in ductile single crystals.
\newblock \emph{Journal of the Mechanics and Physics of Solids} \textbf{47} (1999), 397--462.
\newblock doi:10.1016/S0022-5096(97)00096-3.

\bibitem{ContiOrtiz2005}
S.~Conti and M.~Ortiz,
\newblock Dislocation microstructures and the effective behavior of single crystals.
\newblock \emph{Archive for Rational Mechanics and Analysis} \textbf{176} (2005), 103--147.
\newblock doi:10.1007/s00205-004-0353-2.

\bibitem{ContiHauretOrtiz2007}
S.~Conti, P.~Hauret, and M.~Ortiz,
\newblock Concurrent multiscale computing of deformation microstructure by relaxation and local enrichment with application to single-crystal plasticity.
\newblock \emph{Multiscale Modeling \& Simulation} \textbf{6} (2007), no.~1, 135--157.
\newblock doi:10.1137/060662332.

\bibitem{ContiOrtiz2008}
S.~Conti and M.~Ortiz,
\newblock Minimum principles for the trajectories of systems governed by rate problems.
\newblock \emph{Journal of the Mechanics and Physics of Solids} \textbf{56} (2008), no.~5, 1885--1904.
\newblock doi:10.1016/j.jmps.2007.11.006.

\bibitem{McCormick1988}
P.~G.~McCormick,
\newblock Theory of flow localisation due to dynamic strain ageing.
\newblock \emph{Acta Metallurgica} \textbf{36} (1988), no.~12, 3061--3067.
\newblock doi:10.1016/0001-6160(88)90043-0.

\bibitem{Ananthakrishna2007}
G.~Ananthakrishna,
\newblock Current theoretical approaches to collective behavior of dislocations.
\newblock \emph{Physics Reports} \textbf{440} (2007), no.~4--6, 113--259.
\newblock doi:10.1016/j.physrep.2006.10.003.

\bibitem{Yilmaz2011}
A.~Yilmaz,
\newblock The Portevin--Le Ch\^atelier effect: a review of experimental findings.
\newblock \emph{Science and Technology of Advanced Materials} \textbf{12} (2011), 063001.
\newblock doi:10.1088/1468-6996/12/6/063001.

\bibitem{DivouxEtAl2016}
T.~Divoux, M.~A.~Fardin, S.~Manneville, and S.~Lerouge,
\newblock Shear banding of complex fluids.
\newblock \emph{Annual Review of Fluid Mechanics} \textbf{48} (2016), 81--103.
\newblock doi:10.1146/annurev-fluid-122414-034416.

\bibitem{OsovskiRittelVenkert2013}
S.~Osovski, D.~Rittel, and A.~Venkert,
\newblock The respective influence of microstructural and thermal softening on adiabatic shear localization.
\newblock \emph{Mechanics of Materials} \textbf{56} (2013), 11--22.
\newblock doi:10.1016/j.mechmat.2012.09.008.

\bibitem{BaiCogswellBazant2011}
P.~Bai, D.~A.~Cogswell, and M.~Z.~Bazant,
\newblock Suppression of phase separation in LiFePO$_4$ nanoparticles during battery discharge.
\newblock \emph{Nano Letters} \textbf{11} (2011), no.~11, 4890--4896.
\newblock doi:10.1021/nl202764f.

\bibitem{Bazant2017}
M.~Z.~Bazant,
\newblock Thermodynamic stability of driven open systems and control of phase separation by electro-autocatalysis.
\newblock \emph{Faraday Discussions} \textbf{199} (2017), 423--463.
\newblock doi:10.1039/C7FD00037E.

\bibitem{Bhattacharya2003}
K.~Bhattacharya,
\newblock \emph{Microstructure of Martensite: Why It Forms and How It Gives Rise to the Shape-Memory Effect}.
\newblock Oxford Series on Materials Modelling, Oxford University Press, Oxford, 2003.
\newblock ISBN 978-0-19-850934-9; doi:10.1093/oso/9780198509349.001.0001.

\bibitem{DeGiorgi1993}
E.~De Giorgi,
\newblock New problems on minimizing movements.
\newblock In \emph{Boundary Value Problems for Partial Differential Equations and Applications}, C.~Baiocchi and J.-L.~Lions, eds., RMA Res. Notes Appl. Math., vol.~29, Masson, Paris, 1993, pp.~81--98.

\bibitem{Ambrosio1995}
L.~Ambrosio,
\newblock Minimizing movements.
\newblock \emph{Rend. Accad. Naz. Sci. XL Mem. Mat. Appl.}, ser.~5, \textbf{19} (1995), 191--246.

\bibitem{AmbrosioGigliSavare2008}
L.~Ambrosio, N.~Gigli, and G.~Savar\'e,
\newblock \emph{Gradient Flows: In Metric Spaces and in the Space of Probability Measures}, 2nd ed.
\newblock Lectures in Mathematics ETH Z\"urich, Birkh\"auser, Basel, 2008.
\newblock doi:10.1007/978-3-7643-8722-8.

\bibitem{Reshetnyak1968}
Yu.~G.~Reshetnyak,
\newblock Weak convergence of completely additive vector functions on a set.
\newblock \emph{Siberian Mathematical Journal} \textbf{9} (1968), 1039--1045.
\newblock doi:10.1007/BF02196453.

\bibitem{Rockafellar1971IntegralII}
R.~T.~Rockafellar,
\newblock Integrals which are convex functionals. II.
\newblock \emph{Pacific Journal of Mathematics} \textbf{39} (1971), no.~2, 439--469.
\newblock doi:10.2140/pjm.1971.39.439.

\bibitem{Rockafellar1971ConvexIntegralDuality}
R.~T.~Rockafellar,
\newblock Convex integral functionals and duality.
\newblock In \emph{Contributions to Nonlinear Functional Analysis}, E.~H.~Zarantonello, ed., Academic Press, New York, 1971, pp.~215--236.
\newblock doi:10.1016/B978-0-12-775850-3.50012-1.

\bibitem{AFP00}
L.~Ambrosio, N.~Fusco, and D.~Pallara,
\newblock \emph{Functions of Bounded Variation and Free Discontinuity Problems}.
\newblock Oxford Mathematical Monographs, The Clarendon Press, Oxford University Press, New York, 2000.
\newblock doi:10.1093/oso/9780198502456.001.0001.

\bibitem{Brezis2011}
H.~Br\'ezis,
\newblock \emph{Functional Analysis, Sobolev Spaces and Partial Differential Equations}.
\newblock Universitext, Springer, New York, 2011.
\newblock doi:10.1007/978-0-387-70914-7.

\bibitem{MielkeOrtiz2008}
A.~Mielke and M.~Ortiz,
\newblock A class of minimum principles for characterizing the trajectories and the relaxation of dissipative systems.
\newblock \emph{ESAIM: Control, Optimisation and Calculus of Variations} \textbf{14} (2008), no.~3, 494--516.
\newblock doi:10.1051/cocv:2007064.

\bibitem{MielkeStefanelli2011}
A.~Mielke and U.~Stefanelli,
\newblock Weighted energy--dissipation functionals for gradient flows.
\newblock \emph{ESAIM: Control, Optimisation and Calculus of Variations} \textbf{17} (2011), no.~1, 52--85.
\newblock doi:10.1051/cocv/2009043.

\bibitem{BenamouBrenier2000}
J.-D.~Benamou and Y.~Brenier,
\newblock A computational fluid mechanics solution to the Monge--Kantorovich mass transfer problem.
\newblock \emph{Numerische Mathematik} \textbf{84} (2000), no.~3, 375--393.
\newblock doi:10.1007/s002110050002.

\bibitem{Negri2014}
M.~Negri,
\newblock Quasi-static rate-independent evolutions: characterization, existence, approximation and application to fracture mechanics.
\newblock \emph{ESAIM: Control, Optimisation and Calculus of Variations} \textbf{20} (2014), no.~4, 983--1008.
\newblock doi:10.1051/cocv/2014004.

\bibitem{AliprantisBorder2006}
C.~D. Aliprantis and K.~C. Border,
\newblock \emph{Infinite Dimensional Analysis: A Hitchhiker's Guide},
\newblock 3rd ed., Springer, Berlin, 2006.

\bibitem{Warga1972}
J.~Warga,
\newblock \emph{Optimal Control of Differential and Functional Equations},
\newblock Academic Press, New York, 1972.

\bibitem{CastaingRaynaudDeFitteValadier2004}
C.~Castaing, P.~Raynaud de Fitte, and M.~Valadier,
\emph{Young Measures on Topological Spaces: With Applications in Control Theory and Probability Theory}.
Mathematics and Its Applications, vol.~571, Kluwer Academic Publishers,
Dordrecht, 2004.
doi:10.1007/1-4020-1964-5.

\end{thebibliography}
\end{document}